\DeclareMathAlphabet\gothic{U}{euf}{m}{n}
\DeclareMathAlphabet{\mathcall}{OMS}{cmsy}{m}{n}
\newcommand{\ul}{\textbf}
\newcommand{\R}{\mathbb{R}}
\newcommand{\desda}{\Leftrightarrow}
\newcommand{\bcs}{\begin{cases}}
	\newcommand{\ecs}{\end{cases}}
\newcommand{\bqn}{\begin{equation}\begin{aligned}}
\newcommand{\eqn}{\end{aligned}\end{equation}}
\newcommand{\www}{\mbox{\boldmath$\omega$}}
\begin{document}
	
	
	\markboth{Jiong Zhang, \and Remco Duits, \and Gonzalo Sanguinetti, \and Bart M. ter Haar Romeny}{Numerical Approaches for Linear $\mbox{Diffusions on}$ $SE(2)$}
	\title{Numerical Approaches for Linear Left-invariant $\mbox{Diffusions on}$ $SE(2)$, their Comparison to Exact Solutions, and their $\mbox{Applications}$ in Retinal Imaging.}
	
	\author[Jiong Zhang, Remco Duits, Gonzalo Sanguinetti and Bart M. ter Haar Romeny]{Jiong Zhang \affil{1,}\footnote{Corresponding and joint main authors. \textit{Email address:}
			J.Zhang1@tue.nl (J. Zhang), R.Duits@tue.nl (R. Duits) }, Remco Duits\affil{1,}\affil{2,}$^*$, Gonzalo Sanguinetti\affil{2} and Bart M. ter Haar Romeny\affil{3,}\affil{1}}
	
	\address{
		$^1$Department of Biomedical Engineering, Biomedical Image Analysis (BMIA) \\
		$^2$Department of Mathematics and Computer Science, Image Science and Technology (IST/e)\\
		$^{1,2}$Eindhoven University of Technology, P.O. Box 513, 5600 MB, Eindhoven, The Netherlands.\\
		$^3$Northeastern University, Shenyang, China.}
	%
	%
	%
	
	\begin{abstract}
		Left-invariant PDE-evolutions on the roto-translation group $SE(2)$ (and their resolvent equations) have been widely studied in the fields of cortical modeling and image analysis. They include hypo-elliptic diffusion (for contour enhancement) proposed by Citti $\&$ Sarti, and Petitot, and they include the direction process (for contour completion) proposed by Mumford. This paper presents a thorough study and comparison of the many numerical approaches, which, remarkably, are missing in the literature. Existing numerical approaches can be classified into 3 categories: Finite difference methods, Fourier based methods (equivalent to $SE(2)$-Fourier methods), and stochastic methods (Monte Carlo simulations). There are also 3 types of exact solutions to the PDE-evolutions that were derived explicitly (in the spatial Fourier domain) in previous works by Duits and van Almsick in 2005. Here we provide an overview of these 3 types of exact solutions and explain how they relate to each of the 3 numerical approaches. We compute relative errors of all numerical approaches to the exact solutions, and the Fourier based methods show us the best performance with smallest relative errors. We also provide an improvement of Mathematica algorithms for evaluating Mathieu-functions, crucial in implementations of the exact solutions. Furthermore, we include an asymptotical analysis of the singularities within the kernels and we propose a probabilistic extension of underlying stochastic processes that overcomes the singular behavior in the origin of time-integrated kernels. Finally, we show retinal imaging applications of combining left-invariant PDE-evolutions with invertible orientation scores.
	\end{abstract}
	
	\keywords{Brownian motion, Euclidean motion group, PDE's on $SE(2)$, Mathieu operators, contour completion, contour enhancement, retinal imaging}
	
	
	\maketitle
	
	\section{Introduction} \label{section:Introduction}
	Hubel and Wiesel \cite{Hube59a} discovered that certain visual cells in cats' striate cortex have a directional preference.
	It has turned out that there exists an intriguing and extremely precise spatial and directional organization into so-called cortical hypercolumns, see Figure~\ref{fig:VisualCortex}.
	A hypercolumn can be interpreted as a ``visual pixel'', representing the optical world at a single location, neatly decomposed into a complete set of orientations. Moreover, correlated horizontal connections run parallel to the cortical surface and link columns across the spatial visual field with a shared orientation preference, allowing cells to combine visual information from spatially separated receptive fields.
	Synaptic physiological studies of these horizontal pathways in cats' striate cortex show that neurons with aligned receptive field sites excite each other \cite{Bosking}. Apparently, the visual system not only constructs a score of local orientations, but also accounts for context and alignment by excitation and inhibition \emph{a priori}, which can be modeled by left-invariant PDE's and ODE's on $SE(2)$ \cite{Petitot,Citti,DuitsPhDThesis,Duits2007IJCV,Boscain3,August,BenYosef2012a,Chirikjian2,MashtakovNMTMA,Gonzalo,SartiCitteCompiledBook2014,Zweck,DuitsAMS1,DuitsAMS2,DuitsAlmsick2008,FrankenPhDThesis,BarbieriArxiv2013,Mumford}.
	\begin{figure}[!b]
		\centering
		\includegraphics[width= 0.6\hsize]{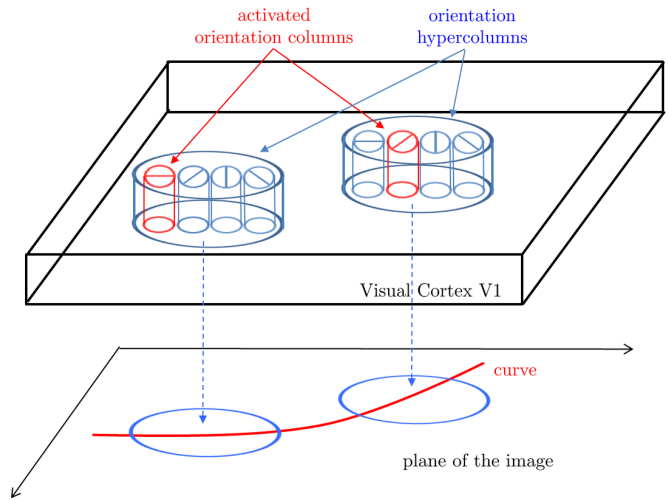}
		\caption{The orientation columns in the primary visual cortex.}
		\label{fig:VisualCortex}
	\end{figure}
	Motivated by the orientation-selective cells, so-called orientation scores are constructed
	by lifting all elongated structures (in 2D images) along an extra orientation dimension \cite{Kalitzin97,DuitsPhDThesis,Duits2007IJCV}. The main advantage of using the orientation score is that we can disentangle the elongated structures involved in a crossing allowing for a crossing preserving flow.
	
	Invertibility of the transform between image and score is of vital importance, to both tracking \cite{BekkersJMIV} and enhancement\cite{Sharma2014,Franken2009IJCV}, as we do not want to tamper data-evidence in our continuous coherent state transform\cite{Alibook,Zweck} before actual processing takes place. This is a key advantage over related state-of-the-art methods\cite{AugustPAMI,MashtakovNMTMA,Zweck,Boscain3,Citti}.
	\begin{figure}[!htbp]
		\centering
		\includegraphics[width=.7\textwidth]{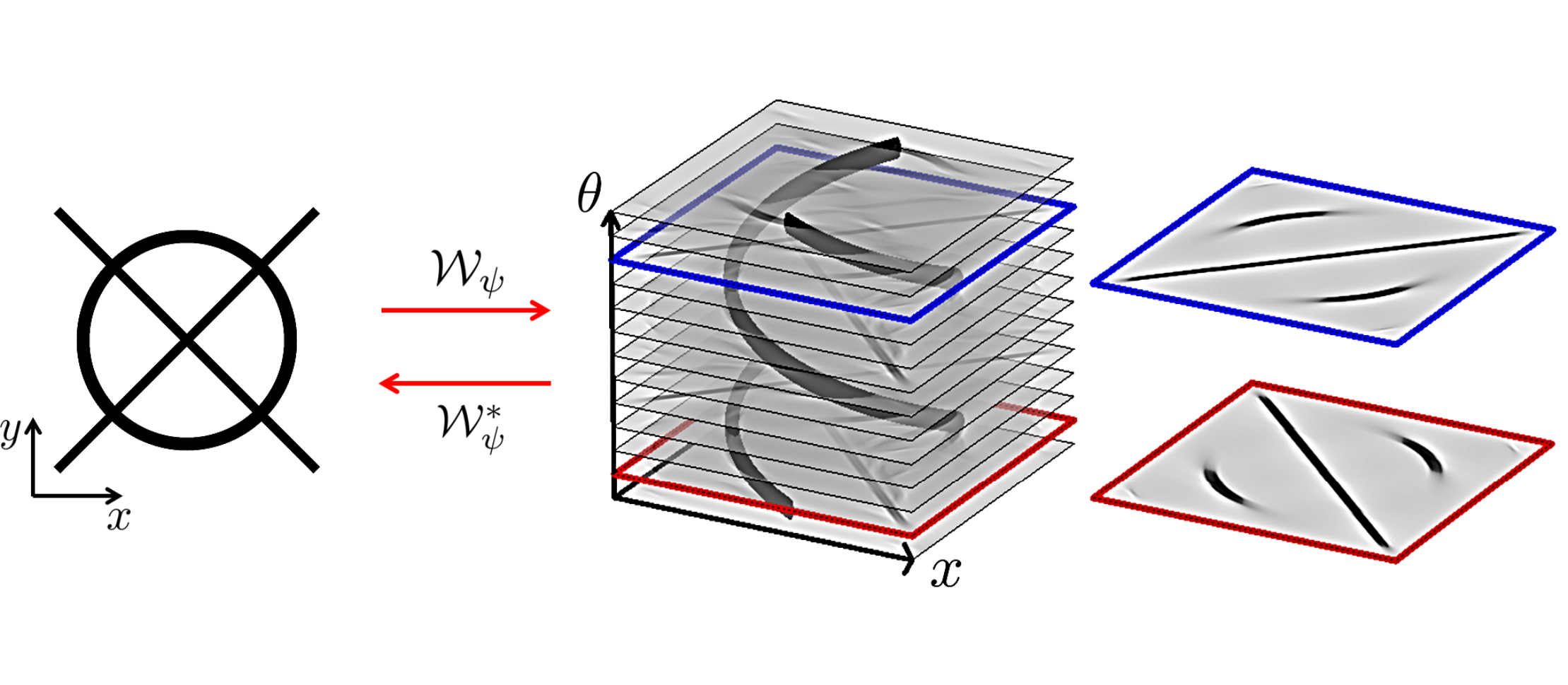}
		\caption{Real part of an orientation score of an example image.}
		\label{fig:OSIntro}
	\end{figure}
	
	Invertible orientation scores (see Figure~\ref{fig:OSIntro}) are obtained via a unitary transform between the space of disk-limited images $\mathbb{L}_{2}^{\varrho}(\R^{2}):=\{f \in \mathbb{L}_{2}(\R^{2}) \; |\; \textrm{support}\{\mathcall{F}_{\R^{2}}f\} \subset B_{\ul{0},\varrho}\}$ (with $\varrho>0$ close to the Nyquist frequency and $B_{\ul{0},\varrho}=\{\www \in \R^{2}\;|\; \|\www\|\leq \varrho\}$), and the space of orientation scores. The space of orientation scores is a specific reproducing kernel vector subspace \cite{DuitsPhDThesis,Aronszajn1950,Alibook} of $\mathbb{L}_{2}(\R^{2}\times S^{1})$, see Appendix~\ref{app:new} for the connection with continuous wavelet theory. The transform from an image $f$ to an orientation score $U_f:=\mathcall{W}_\psi f$ is constructed via an anisotropic convolution kernel $\psi \in \mathbb{L}_{2}(\R^{2}) \!\cap\! \mathbb{L}_{1}(\R^{2})$:
	\begin{equation} \label{OrientationScoreConstruction}
	U_f(\ul{x},\theta)=(\mathcall{W}_\psi [f])(\ul{x},\theta)=\int_{\R^2}\overline{\psi(\ul{R}_{\theta}^{-1}(\ul{y}{-\ul{x}}))}f(\ul{y})d\ul{y},
	\end{equation}
	where $\mathcall{W}_\psi$ denotes the transform and $\small \ul{R}_\theta=
	\left( \begin{array}{ccc}
	\cos\theta & -\sin\theta \\
	\sin\theta & \cos\theta \\
	\end{array} \right).$
	Exact reconstruction is obtained by
	\begin{equation}\label{OrientationScoreReconstruction}
	\begin{aligned}
	f(\ul{x})
	=(\mathcall{W}_\psi^*[U_f])(\ul{x})
	=\left(\mathcall{F}_{\R^2}^{-1}\left[M_\psi^{-1}\mathcall{F}_{\R^2}\left[\frac{1}{2\pi}\int_0^{2\pi}(\psi_\theta*U_f(\cdot,\theta))d\theta\right]\right]\right)(\ul{x}),
	\end{aligned}
	\end{equation}
	for all $\ul{x} \in \R^2$, where $\mathcall{F}_{\R^2}$ is the unitary Fourier transform on $\mathbb{L}_2(\R^2)$ and $M_\psi \in C(\R^2, \R)$ is given by $M_\psi(\pmb{\omega})=\int_0^{2\pi}|\hat{\psi}(\ul{R}_\theta^{-1}\pmb{\omega})|^2 d\theta$ for all $\pmb{\omega} \in \R^{2}$, with $\hat{\psi}:=\mathcall{F}_{\R^2}\psi$, $\psi_{\theta}(\ul{x})=\psi(R_{\theta}^{-1}\ul{x})$. Furthermore, $\mathcall{W}_\psi^*$ denotes the adjoint of wavelet transform $\mathcall{W}_\psi:\mathbb{L}_2(\R^2)\rightarrow \mathbb{C}_{K}^{SE(2)}$, where the reproducing kernel norm on the space of orientation scores, $\mathbb{C}_{K}^{SE(2)}=\{\mathcall{W}_{\psi}f \; |\; f \in \mathbb{L}_{2}(\R^{2})\}$, is explicitly characterized in \cite[Thm.4, Eq.~11]{Duits2007IJCV}. Well-posedness of the reconstruction is controlled by $M_\psi$\cite{Duits2007IJCV,BekkersJMIV}. For details see Appendix~\ref{app:new}. Regarding the choice of $\psi$ in our algorithms, we rely on the wavelets proposed in \cite[ch:4.6.1]{DuitsPhDThesis},\cite{BekkersJMIV}.
	
	In this article, the invertible orientation scores serve as the initial condition of left-invariant $\mbox{(non-)}$ linear PDE evolutions on the rotation-translation group $\R^2 \rtimes SO(2) \equiv SE(2)$, where by definition, \\$\R^d \rtimes S^{d-1}:=\R^d \rtimes SO(d)/(\{0\} \times SO(d-1))$. Now in our case $d=2$, so $\R^2 \rtimes S^1=\R^2 \rtimes SO(2)$ and we identify rotations with orientations. The primary focus of this article, however, is on the numerics and comparison to the exact solutions of linear left-invariant PDE's on $SE(2)$. Here by left-invariance and linearity we can restrict ourselves in our numerical analysis to the impulse response, where the initial condition is equal to  $\delta_e=\delta_0^x \otimes \delta_0^y \otimes \delta_0^\theta$, where $\otimes$ denotes the tensor product in distributional sense.
	
	In fact, we consider all linear, second order, left-invariant evolution equations and their resolvents on $\mathbb{L}_{2}(\R^{2} \rtimes  S^{1}) \equiv \mathbb{L}_2(SE(2))$, which actually correspond to the forward Kolmogorov equations of left-invariant stochastic processes. Specifically, there are two types of stochastic processes we will investigate in the field of imaging and vision:
	\begin{compactitem}
		\item The contour enhancement process as proposed by Citti et al.\cite{Citti} in the cortical modeling.
		\item The contour completion process as proposed by Mumford \cite{Mumford} also called the direction process.
	\end{compactitem}
	In image analysis, the difference between the two processes is that the contour enhancement focuses on the de-noising of elongated structures, while the contour completion aims for bridging the gap of interrupted contours since it contains a convection part.
	
	Although not being considered in this article, we mention related 3D $(SE(3))$ extensions of these processes and applications (primarily in DW-MRI) in \cite{Creusen2013,MomayyezSiakhal2013,ReisertSE3-2012}. Most of our numerical findings in this article apply to these $SE(3)$ extensions as well.
	
	Many numerical approaches for implementing left-invariant PDE's on $SE(2)$ have been investigated intensively in the fields of cortical modeling and image analysis. Petitot introduced a geometrical model for the visual cortex V1 \cite{Petitot}, further refined to the $SE(2)$ setting by Citti and Sarti \cite{Citti}. A method for completing the boundaries of partially occluded objects based on stochastic completion fields was proposed by Zweck and Williams\cite{Zweck}. Also, Barbieri et al.\cite{BarbieriArxiv2013} proposed a left-invariant cortical contour perception and motion integration model within a 5D contact manifold. In the recent work of Boscain et al.\cite{Boscain3}, a numerical algorithm for integration of a hypoelliptic diffusion equation on the group of translations and discrete rotations $SE(2,N)$ is investigated. Moreover, some numerical schemes were also proposed by August et al. \cite{August,AugustPAMI} to understand the direction process for curvilinear structure in images. Duits, van Almsick and Franken\cite{DuitsAMS1,DuitsAMS2,DuitsAlmsick2008,FrankenPhDThesis,MarkusThesis,DuitsPhDThesis} also investigated different models based on Lie groups theory, with many applications to medical imaging.
	
	The numerical schemes for left-invariant PDE's on $SE(2)$ can be categorized into 3 approaches:
	\begin{compactitem}
		\item Finite difference approaches.
		\item Fourier based approaches, including $SE(2)$-Fourier methods.
		\item Stochastic approaches.
	\end{compactitem}
	Recently, several explicit representations of exact solutions were derived \cite{DuitsCASA2005,DuitsCASA2007,DuitsAMS1,MarkusThesis,DuitsAlmsick2008,Boscain1}. In this paper we will set up a structured framework to compare all the numerical approaches to the exact solutions. \\
	\textbf{Contributions of the article:}
	In this article, we:
	\begin{compactitem}
		\item compare all numerical approaches (finite difference methods, a stochastic method based on Monte Carlo simulation and Fourier based methods) to the exact solution for contour enhancement/completion. We show that the Fourier based approaches perform best and we also explain this theoretically in Theorem \ref{th:RelationofFourierBasedWithExactSolution};
		\item provide a concise overview of all exact approaches;
		\item implement exact solutions, including improvements of Mathieu-function evaluations in $\textit{Mathematica}$;
		\item establish explicit connections between exact and numerical approaches for contour enhancement;
		\item analyze the poles/singularities of the resolvent kernels;
		\item propose a new probabilistic time integration to overcome the poles, and we prove this via new simple asymptotic formulas
		for the corresponding kernels that we present in this article;
		\item show benefits of the newly proposed time integration in contour completion via stochastic completion fields \cite{Zweck};
		\item analyze errors when using the $\textbf{DFT}$ (Discrete Fourier Transform) instead of the $\textbf{CFT}$ (Continuous Fourier Transform) to transform exact formulas in the spatial Fourier domain to the $SE(2)$ domain;
		\item apply left-invariant evolutions as preprocessing before tracking the retinal vasculature via the ETOS-algorithm \cite{BekkersJMIV} in optical imaging of the eye.
	\end{compactitem}
	\vspace{1.5ex}
	\textbf{Structure of the article:} In Section 2 we will briefly describe the theory of the $SE(2)$ group and left-invariant vector fields. Subsequently, in Section 3 we will discuss the linear time dependent $\mbox(\text{convection-})$ diffusion processes on $SE(2)$ and the corresponding resolvent equation for contour enhancement and contour completion. In Subsection~\ref{IterationResolventOperators} we provide improved kernels via iteration of resolvent operators and give a probabilistic interpretation.
	Then we show the benefit in stochastic completion fields.
	For completeness, the fundamental solution and underlying probability theory for contour enhancement/completion is explained in Subsection~\ref{section:FundamentalSolutions}.
	
	In Section 4 we will give the full implementations for all our numerical schemes for contour enhancement/completion, i.e. explicit and implicit finite difference schemes, numerical Fourier based techniques, and the Monte-Carlo simulation of the stochastic approaches. Then, in Section 5, we will provide a new concise overview of all three exact approaches in the general left-invariant PDE-setting. Direct relations between the exact solution representations and the numerical approaches are also given in this section. After that, we will provide experiments with different parameter settings and show the performance of all different numerical approaches compared to the exact solutions. Finally, we conclude our paper with applications on retinal images to show the power of our multi-orientation left-invariant diffusion with an application on complex vessel enhancement, i.e. in the presence of crossings and bifurcations.

	\section{The $SE(2)$ Group and Left-invariant Vector Fields}
	\label{section:The $SE(2)$ Group and Left-invariant Vector Fields}

	\subsection{The Euclidean Motion Group $SE(2)$ and Representations}\label{section:The Euclidean Motion Group $SE(2)$ and Group Representations}
	
	An orientation score $U:SE(2) \to \mathbb{C}$ is defined on the Euclidean motion group $SE(2)=\R^2 \rtimes S^1$. The group product on $SE(2)$ is given by
	\begin{equation}
	gg'=(\ul{x},\theta)(\ul{x}',\theta')=(\ul{x}+\ul{R}_\theta \cdot \ul{x}',\theta+\theta'), \quad \textit{for all} \quad g,g' \in SE(2).
	\end{equation}
	The translation and rotation operators on an image $f$ are given by $(\mathcall{T}_\ul{x}f)(\ul{y})=f(\ul{y}-\ul{x})$ and $(\mathcall{R}_\theta f)(\ul{x})=f((\ul{R}_\theta)^{-1}\ul{x})$. Combining these operators yields the unitary $SE(2)$ group representation $\mathcall{U}_g=\mathcall{T}_\ul{x} \circ \mathcall{R}_\theta$. Note that $g h \mapsto \mathcall{U}_{gh}=\mathcall{U}_{g} \mathcall{U}_{h}$ and $\mathcall{U}_{g^{-1}}=\mathcall{U}_{g}^{-1}=\mathcall{U}_{g}^{*}$.
	We have
	\begin{equation}
	\forall g \in SE(2):(\mathcall{W}_\psi \circ \mathcall{U}_g)= (\mathcall{L}_g \circ \mathcall{W}_\psi)
	\end{equation}
	with group representation $g \mapsto \mathcall{L}_{g}$ given by $\mathcall{L}_{g}U(h)=U(g^{-1}h)$, and consequently, the effective operator $\Upsilon:=\mathcall{W}_\psi^* \circ \Phi \circ \mathcall{W}_\psi$ on the image domain (see Figure~\ref{fig:ImageProcessingViaOS}) commutes with rotations and translations if the operator $\Phi$ on the orientation score satisfies
	\begin{align}\label{rel}
	\Phi \circ \mathcall{L}_g=\mathcall{L}_g \circ \Phi, \quad \textit{for all}\quad g \in SE(2).
	\end{align}
	Moreover, if $\Phi$ maps the space of orientation scores onto itself, sufficient condition (\ref{rel}) is also necessary for rotation and translation covariant image processing (i.e. $\Upsilon$ commutes with $\mathcall{U}_{g}$ for all $g \in SE(2)$).
	For details and proof see \cite[Thm.21, p.153]{DuitsPhDThesis}.
	However, operator $\Phi$ should not be right-invariant, i.e. $\Phi$ should not commute with the right-regular representation $g \mapsto \mathcall{R}_{g}$ given by $\mathcall{R}_{g}U(h)=U(hg)$, as  $\mathcall{R}_{g}\mathcall{W}_{\psi}=\mathcall{W}_{\mathcall{U}_{g}\psi}$ and operator $\Upsilon$ should rather take advantage from the anisotropy of the wavelet $\psi$.
	
	We conclude that by our construction of orientation scores \emph{only left-invariant operators are of interest}.
	Next we will discuss the left-invariant derivatives (vector-fields) on smooth functions on $SE(2)$, which we will employ in the PDE of interest presented in Section~\ref{section:The PDE's of Interest}. For an intuition of left-invariant processing on orientation scores (via left-invariant vector fields) see
	Figure~\ref{fig:ImageProcessingViaOS}.
	\begin{figure}
		\centering
		\includegraphics[width=.9\textwidth]{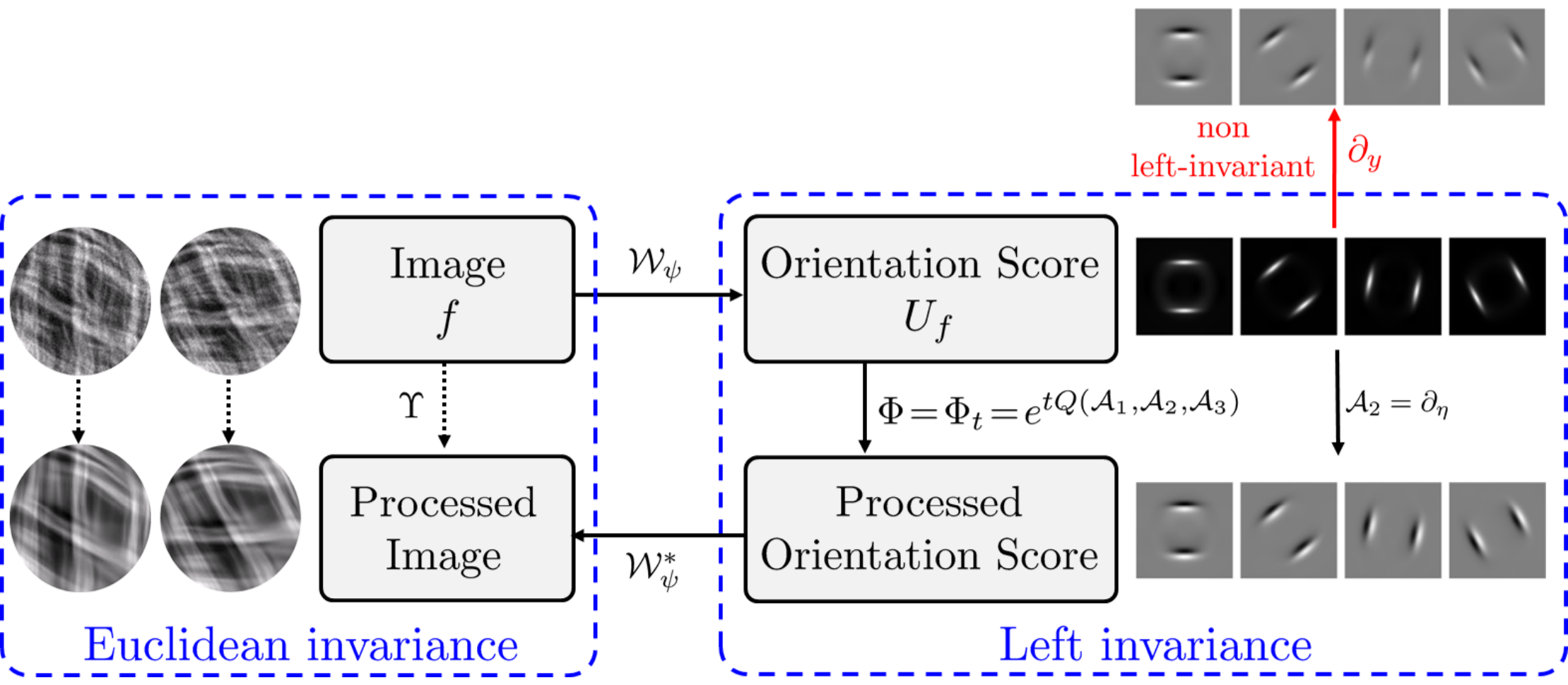}
		\caption{Image processing via invertible orientation scores. Operators $\Phi$ on the invertible orientation score robustly relate to operators $\Upsilon$ on the image domain.  Euclidean-invariance of $\Upsilon$ is obtained by left-invariance of $\Phi$. Thus, we consider left-invariant (convection)-diffusion operators $\Phi=\Phi_t$ with evolution time $t$, which are generated by a quadratic form $Q=Q^{\ul{D},\ul{a}}(\mathcall{A}_1,\mathcall{A}_2,\mathcall{A}_3)$ (
			cf.~\!Eq.~\!(\ref{diffusionconvectiongenerator})) on the left-invariant vector fields $\{\mathcall{A}_i\}$, cf.~\!Eq.~\!(\ref{leftInvariantDerivatives}). We show the relevance of left-invariance of $\mathcall{A}_2$ acting on an image of a circle (as in Figure \ref{fig:OSIntro}) compared to action of the  non-left-invariant derivative $\partial_y$ on the same image. }
		\label{fig:ImageProcessingViaOS}
	\end{figure}

	\subsection{Left-invariant Tangent Vectors and Vector Fields}\label{section:Left-invariant Vector Fields}
	
	The Euclidean motion group $SE(2)$ is a Lie group. Its tangent space at the unity element $T_e(SE(2))$ is the corresponding Lie algebra and it is spanned by the basis $\{\ul{e}_x,\ul{e}_y,\ul{e}_\theta\}$. Next we derive the left-invariant derivatives associated to $\ul{e}_x,\ul{e}_y,\ul{e}_\theta$, respectively.
	A tangent vector $X_e \in T_e(SE(2))$ is tangent to a curve $\gamma$ at unity element $e=(0,0,0)$. Left-multiplication of the curve $\gamma$ with $g \in SE(2)$ associates to each $X_{e} \in T_{e}(SE(2))$ a corresponding tangent vector $X_{g}=(L_{g})_{*}X_{e} \in T_{g}(SE(2))$:
	\begin{equation}
	\begin{aligned}
	\{\ul{e}_\xi(g),\ul{e}_\eta(g),\ul{e}_\theta(g)\} &=\{(L_g)_{*} \ul{e}_x,(L_g)_{*} \ul{e}_y,(L_g)_{*} \ul{e}_\theta\} \\ &=\{\cos\theta\ul{e}_x\!+\!\sin\theta\ul{e}_y,-\sin\theta\ul{e}_x\!+\!\cos\theta\ul{e}_y,\ul{e}_\theta\},
	\end{aligned}
	\end{equation}
	where $(L_g)_*$ denotes the pushforward of left-multiplication $L_gh = gh$, and where we introduce the local coordinates $\xi:= x \cos \theta + y \sin \theta$ and $\eta:= -x \sin \theta + y \cos \theta$.
	As the vector fields can also be considered as differential operators on locally defined smooth functions \cite{aubin2001diffgeo}, we replace $\ul{e}_i$ by using $\partial_i$, $i=\xi,\eta,\theta$, yielding the general form  for a left-invariant vectorfield
	\begin{equation}
	\begin{aligned}
	&X_g(U)=(c^\xi(\cos\theta\partial_x+\sin\theta\partial_y)
	+c^\eta(-\sin\theta\partial_x+\cos\theta\partial_y)+c^\theta\partial_\theta)U,
	\textit{ for all } c^\xi, c^\eta, c^\theta \in \R.
	\end{aligned}
	\end{equation}
	Throughout this article, we shall rely on the following notation for left-invariant vector fields
	\begin{equation} \label{leftInvariantDerivatives}
	\{\mathcall{A}_1,\mathcall{A}_2,\mathcall{A}_3\}:=\{\partial_\xi,\partial_\eta,\partial_\theta\}=\{\cos\theta\partial_x+\sin\theta\partial_y,-\sin\theta\partial_x+\cos\theta\partial_y,\partial_\theta\},
	\end{equation}
	which is the frame of left-invariant derivatives acting on $SE(2)$, the domain of the orientation scores.
	
	\section{The PDE's of Interest} \label{section:The PDE's of Interest}
	
	\subsection{Diffusions and Convection-Diffusions on $SE(2)$ }\label{section:TimedDiffusion}
	A diffusion process on $\mathbb{R}^n$ with a square integrable input image $f:\mathbb{R}^n \longmapsto \mathbb{R}$ is given by
	\begin{align} 
	\left\{\begin{aligned}
	&\partial_t u(\ul{x},t)=\triangledown \cdot \ul{D}\triangledown u(\ul{x},t) \qquad \ul{x}\in\mathbb{R}^n,t \geq 0, \\
	&u(\ul{x},0)=f(\ul{x}).\\
	\end{aligned} \right.
	\end{align}
	Here, the $\triangledown$ operator is defined based on the spatial coordinates with $\triangledown=(\partial_{x_1},...,\partial_{x_n})$, and the constant diffusion tensor $\ul{D}$ is a positive definite matrix of size $n \times n$. Similarly, the left-invariant diffusion equation on $SE(2)$ is given by:
	\begin{align} \label{ExactDiffusionConvectionEquation}
	\left\{\begin{aligned}
	\partial_t W(g,t)&=\left( \begin{array}{ccc}
	\partial_\xi & \partial_\eta & \partial_\theta \end{array} \right)
	\left( \begin{array}{ccc}
	D_{\xi\xi} & D_{\xi\eta} & D_{\xi\theta} \\
	D_{\eta\xi} & D_{\eta\eta} & D_{\eta\theta} \\
	D_{\theta\xi} & D_{\theta\eta} & D_{\theta\theta}\\
	\end{array} \right)
	\left( \begin{array}{ccc}
	\partial_\xi\\
	\partial_\eta\\
	\partial_\theta \end{array} \right)W(g,t),\\
	W(g,t=0)&=U^{0}(g),\\
	\end{aligned} \right.
	\end{align}
	where as a default the initial condition is usually chosen as the orientation score of image $f \in \mathbb{L}_{2}(\R^{2})$, $U^{0}=U_{f}=\mathcall{W}_\psi f$. From the general theory for left-invariant scale spaces \cite{DuitsSSVM2007}, the quadratic form of the convection-diffusion generator is given by
	\begin{equation} \label{diffusionconvectiongenerator}
	\begin{aligned}
	&Q^{\ul{D},\ul{a}}(\mathcall{A}_1,\mathcall{A}_2,\mathcall{A}_3)=\sum_{i=1}^3\left(-a_i\mathcall{A}_i+\sum_{j=1}^3 D_{ij}\mathcall{A}_i \mathcall{A}_j  \right),\\ &a_i,D_{ij} \in \mathbb{R}, \ul{D}:=[D_{ij}] \geq 0, \ul{D}^T=\ul{D},
	\end{aligned}
	\end{equation}
	where the first order part takes care of the convection process, moving along the exponential curves $t \longmapsto g \cdot exp(t(\sum_{i=1}^3 a_iA_i))$ over time with $g \in SE(2)$, and the second order part specifies the diffusion in the following left-invariant evolutions
	\begin{align} \label{diffusionconvection}
	\left\{ \begin{aligned}
	&\partial_t W=Q^{\ul{D},\ul{a}}(\mathcall{A}_1,\mathcall{A}_2,\mathcall{A}_3)W,\\
	&W(\cdot,t=0)=U^{0}(\cdot).\\
	\end{aligned} \right.
	\end{align}
	In case of linear diffusion, the positive definite diffusion matrix $\ul{D}$ is constant. Then we obtain the solution of the left-invariant diffusion equation via a $SE(2)$-convolution with the Green's function $K_t^{\ul{D},\ul{a}}: SE(2)\rightarrow \R^+$ and the
	initial condition $U^{0}:SE(2) \to \mathbb{C}$:
	\begin{equation} \label{SE(2)ConvolutionOnDiffusion}
	\begin{aligned}
	W(g,t) =(K_t^{\ul{D},\ul{a}} \ast_{SE(2)}U^{0})(g) &=\int \limits_{SE(2)}K_t^{\ul{D},\ul{a}}(h^{-1}g)U^{0}(h)\, {\rm d}h \\ &=\int \limits_{\R^2}\int \limits_{-\pi}^{\pi}K_t^{\ul{D},\ul{a}}(\ul{R}_{\theta'}^{-1}(\ul{x}-\ul{x}'), \theta-\theta')U^{0}(\ul{x}',\theta')\, {\rm d}\theta'{\rm d}\ul{x}',
	\end{aligned}
	\end{equation}
	for all $g=(\ul{x},\theta)\in SE(2)$.
	This can symbolically be written as $W(\cdot,t)=e^{tQ^{\ul{D},\ul{a}}(\mathcall{A}_1,\mathcall{A}_2,\mathcall{A}_3)}U^{0}(\cdot)$.
	In this time dependent diffusion we have to set a fixed time $t>0$. In the subsequent sections we consider time integration while imposing a negatively exponential distribution $T \sim NE(\alpha)$, i.e. $P(T=t)=\alpha e^{-\alpha t}$. We choose this distribution since it is the only continuous memoryless distribution, and in order to ensure that the underlying stochastic process is Markovian, traveling time must be memoryless.
	
	There are two specific cases of interest:
	\begin{compactitem}
		\item Contour enhancement, where $\ul{a}=\ul{0}$ and $\ul{D}$ is symmetric positive semi-definite such that the H\"{o}rmander condition is satisfied. This includes both elliptic diffusion $\ul{D}>0$ and hypo-elliptic diffusion in which case we have $\ul{D} \geq 0$ in such a way that H\"{o}rmander's condition \cite{Hoermander} is still satisfied. In the linear case we shall be mainly concerned with the hypo-elliptic case $\ul{D}=\textrm{diag}\{D_{11},0,D_{33}\}$,
		\item Contour completion, where $\ul{a}=(1,0,0)$ and $\ul{D}=\textrm{diag}\{0,0,D_{33}\}$ with $D_{33}>0$.
	\end{compactitem}
	Several new exact representations for the (resolvent) Green's functions in $SE(2)$ were derived by Duits et al. \cite{DuitsAMS1,DuitsAlmsick2008,DuitsCASA2005,DuitsCASA2007,MarkusThesis}  in the spatial Fourier domain, as explicit formulas were still missing, see e.g.~\cite{Mumford}.
	This includes the Fourier series representations, studied independently in \cite{Boscain3}, but also includes a series of rapidly decaying terms and explicit representations obtained by computing the latter series representations explicitly via the Floquet theorem, producing explicit formulas involving only 4 Mathieu functions. The works in \cite{DuitsAMS1,DuitsAlmsick2008} relied to a large extend on distribution theory to derive these explicit formulas. Here we deal with the general case with $D\geq 0$ and $\ul{a} \in \R^{3}$ (as long as H\"{o}rmander's condition
	\cite{Hoermander} is satisfied) and we stress the analogy between the contour completion and contour enhancement case in
	the appropriate general setting (for the resolvent PDE, for the (convection)-diffusion PDE, and for fundamental solution PDE).
	Instead of relying on distribution theory \cite{DuitsAlmsick2008,DuitsAMS1}, we obtain the general solutions more directly via Sturm-Liouville theory.
	
	Furthermore, in Section \ref{section:Experimental results} we include, for the first time, numerical comparisons of various numerical approaches to the exact solutions. The outcome of which, is underpinned by a strong convergence theorem that we will present in Theorem~\ref{th:RelationofFourierBasedWithExactSolution}.
	
	On top of this, in Appendix~\ref{app:A}, we shall present new asymptotic expansions around the origin that allow us to analyze the order of the singularity at the origin of the resolvent kernels. From these asymptotic expansions we deduce that the singularities in the resolvent kernels
	(and fundamental solutions) are quite severely. In fact, the better the equations are numerically approximated, the weaker the completion and enhancement properties of the kernels.
	
	To overcome this severe discrepancy between the mathematical PDE theory and the practical requirements, we propose time-integration via Gamma distributions (beyond the negative exponential distribution case).
	Mathematically, as we will prove in Subsection~\ref{IterationResolventOperators}, this newly proposed time integration both reduces the singularities, and maintains the formal PDE theory. In fact using a Gamma distribution coincides with iteration the resolvents, with an iteration depth $k$ equal to the squared mean divided by the variance of the Gamma distribution.
	
	We will also show practical experiments that demonstrate the advantage of using the Gamma-distributions: we can control and amplify the infilling property ("the spread of ink") of the PDE's.
	
	\subsection{The Resolvent Equation}\label{section:ResolventEquation}
	Traveling time of a memoryless random walker in $SE(2)$ is negatively exponential distributed, i.e.
	\begin{align} \label{exponentialdistribution}
	p(T=t)=\alpha e^{-\alpha t}, t\geq0,
	\end{align}
	with the expected life time $E(T)=\frac{1}{\alpha}$. Then the resolvent kernel is obtained by integrating Green's function $K_t^{\ul{D},\ul{a}}:SE(2)\rightarrow \R^+$ over the time distribution, i.e.
	\[\label{ResolventKernel}
	\begin{aligned}
	R_{\alpha}^{\ul{D},\ul{a}}&=\alpha\int_0^\infty K_t^{\ul{D},\ul{a}}e^{-\alpha t}dt=\alpha\int_0^\infty e^{tQ}\delta_ee^{-\alpha t}dt=-\alpha(Q-\alpha I)^{-1}\delta_e,
	\end{aligned}
	\]
	where we use short notation $Q=Q^{\ul{D},\ul{a}}(\mathcall{A}_1,\mathcall{A}_2,\mathcall{A}_3)$.
	Via this resolvent kernel, one gets the probability density $P_{\alpha}(g)$ of finding a random walker at location
	$g \in SE(2)$ regardless its traveling time, given that it has departed from distribution $U:SE(2) \to \R^{+}$:
	\begin{equation} \label{Resolvent}
	\begin{aligned}
	P_\alpha(g)=(R_{\alpha}^{\ul{D},\ul{a}} \ast_{SE(2)}U)(g)=-\alpha(Q^{\ul{D},\ul{a}}(\mathcall{A}_1,\mathcall{A}_2,\mathcall{A}_3)-\alpha I)^{-1}U(g),
	\end{aligned}
	\end{equation}
	which is the same as taking the Laplace transform of the left-invariant evolution equations ~(\ref{diffusionconvection}) over time. The resolvent equation can be written as
	\[
	\begin{aligned}
	P_\alpha(g)=\alpha\int_0^\infty e^{-\alpha t}(e^{tQ}U^{0})(g)dt=\alpha((\alpha I-Q)^{-1}U)(g).
	\end{aligned}
	\]
	However, we do not want to go into the details of semigroup theory \cite{Yosida} and just included where $(e^{tQ}U^0)$ in short notation for the solution of Eq.~(\ref{diffusionconvection}).
	Resolvents can be used in completion fields\cite{Zweck,DuitsAMS1,August}. Some resolvent kernels of the contour enhancement and completion process are given in Figure~\ref{fig:ResolventCompletionEnhancementKernels}.
	\begin{figure}[t]
		\centering
		\includegraphics[width=0.7\hsize]{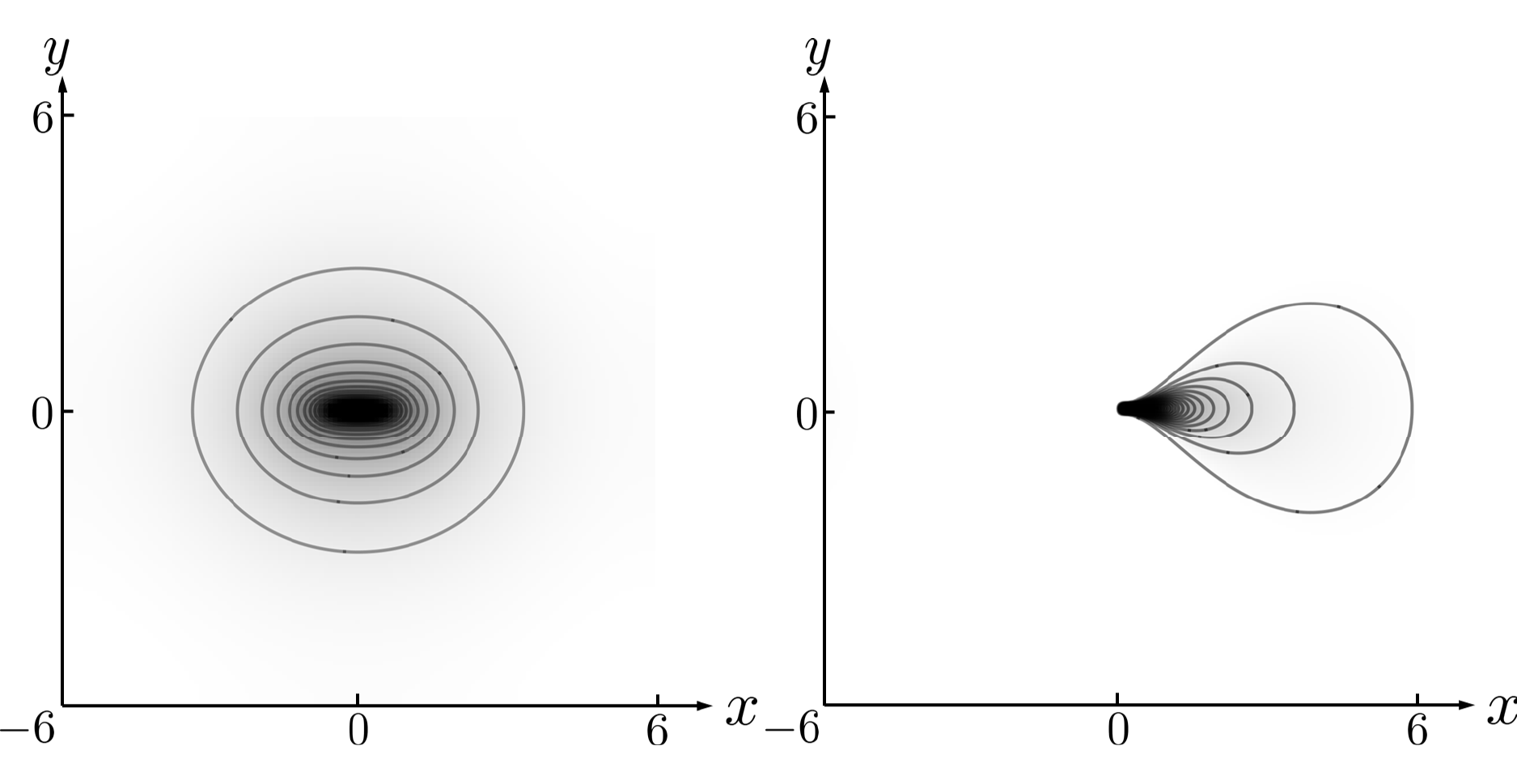}
		\caption{Left: the $xy$-marginal of the contour enhancement kernel $R_{\alpha}^{\ul{D}}:=R_{\alpha}^{\ul{D},\ul{0}}$ with parameters $\alpha=\frac{1}{100}$, $\ul{D}=\{1,0,0.08\}$, numbers of orientations $N_o = 48$ and spatial dimensions $N_s = 128$. Right: the $xy$-marginal of the contour completion kernel $R_{\alpha}^{\ul{D},\ul{a}}$ with parameters $\alpha=\frac{1}{100}$, $\ul{a}=(1,0,0)$, $\ul{D}=\{0,0,0.08\}$, $N_o = 72$ and $N_s = 192$.}
		\label{fig:ResolventCompletionEnhancementKernels}
	\end{figure}
	
	\subsection{Improved Kernels via Iteration of Resolvent Operators \label{IterationResolventOperators}}
	
	The kernels of the resolvent operators suffer from singularities at the origin. Especially for contour completion, this is cumbersome from the application point of view, since here the better one approximates Mumford's direction process and its inherent singularity in the Green's function, the less ``ink'' is spread in the areas with missing and interrupted contours. To overcome this problem we extend the temporal negatively exponential distribution in our line enhancement/completion models by a 1-parameter family of Gamma-distributions.
	
	As a sum $T=T_{1} + \ldots + T_{k}$ of linearly independent negatively exponential time variables is Gamma distributed $P(T=t)= \frac{\alpha^{k} t^{k-1}}{(k-1)!} e^{-\alpha t}$, the time integrated process is now obtained by a $k$-fold resolvent operator. While keeping the expectation of the Gamma distribution fixed by $E(T)=k/\alpha $, increasing of $k$ will induce more mass transport away from $t=0$ towards the void areas of interrupted contours. For $k\geq 3$
	the corresponding Green's function of the $k$-step approach even no longer suffers from a singularity at the origin. This procedure is summarized in the following theorem and applied in Figure~\ref{fig:Gamma}.
	\begin{theorem}\label{th:prob}
		A concatenation of $k$ subsequent, independent time-integrated memoryless stochastic process for contour enhancement(/completion) with expected traveling time $\alpha^{-1}$,
		corresponds to a time-integrated contour enhancement(/completion) process with a Gamma distributed traveling time $T=T_{1}+ \ldots +T_{k}$ with
		\begin{equation}\label{GammaDistributionIntegration}
		\begin{array}{l}
		P(T_{i}=t)=\alpha e^{-\alpha t}, \textrm{ for } i=1,\ldots,k, \\
		P(T=t)=\Gamma(t; k,\alpha):=\frac{\alpha^{k} t^{k-1}}{\Gamma(k)} e^{-\alpha t}.
		\end{array}
		\end{equation}
		The probability density kernel of this stochastic process is given by
		\begin{equation}\label{ProbabilityDensityKernel}
		R_{\alpha,k}^{\ul{D},\ul{a}}=R_{\alpha}^{\ul{D},\ul{a}} *^{(k-1)}_{SE(2)}R_{\alpha}^{\ul{D},\ul{a}}= \alpha^{k} (Q^{\ul{D},\ul{a}}(\underline{\mathcall{A}})-\alpha I)^{-k} \delta_{e},
		\end{equation}
		For the linear, hypo-elliptic, contour enhancement case (i.e. $\ul{D}=\textrm{diag}\{D_{11},0,D_{33}\}$ and $\ul{a}=\ul{0}$) the kernels admit the following asymptotical formula for $|g| << 1:$
		\begin{equation}\label{enhass}
		\begin{array}{ll}
		R_{\alpha,k}(g) &= \int \limits_{0}^{\infty} \frac{\alpha^{k} t^{k-1}e^{-\alpha t}}{(k-1)!}
		\frac{e^{-C^2\frac{|g|^2}{4t}}}{4\pi D_{11}D_{33}t^2} {\rm d}t=
		\frac{\alpha^k}{(k-1)! 4\pi D_{11}D_{33}}  \int \limits_{0}^{\infty}
		t^{k-3}e^{-C^2\frac{|g|^2}{4t}-\alpha t}\,{\rm d}t \\
		&= \frac{2^{1-k}}{\pi D_{11}D_{33} (k-1)!}\alpha^{k}
		||g|C|^{k-2} \; \mathcall{K}(2-k,|g|C\sqrt{\alpha}),
		\end{array}
		\end{equation}
		where $\mathcall{K}(n,z)$ denotes the modified Bessel function of the 2nd kind, and
		with $C \in [2^{-1},\sqrt[4]{2}]$ and with
		\begin{equation} \label{logmodulus}
		|g|=\left|e^{c^{1}\mathcall{A}_{1}+c^{2}\mathcall{A}_{2}+c^{3}\mathcall{A}_{3}}\right|=
		\sqrt{\left(\frac{|c^{1}|^2}{D_{11}}+\frac{|c^{3}|^2}{D_{33}}\right)^2 +\frac{|c^{2}|^2}{D_{11}D_{33}}}
		\end{equation}
		with $c^{1}=\frac{\theta(y-\eta)}{2(1-\cos \theta)}$, $c^{2}=\frac{\theta(\xi-x)}{2(1-\cos \theta)}$, $c^{3}=\theta$ if $\theta \neq 0$ and $(c^{1},c^{2},c^{3})=(x,y,0)$ if $\theta=0$.
	\end{theorem}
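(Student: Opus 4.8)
The statement bundles three assertions: (i) the probabilistic identification of the $k$-fold concatenated process, (ii) the convolution/operator identity~(\ref{ProbabilityDensityKernel}) for its kernel $R_{\alpha,k}^{\ul{D},\ul{a}}$, and (iii) the short-distance asymptotics~(\ref{enhass}) in the hypo-elliptic enhancement case. The plan is to settle (i)--(ii) together by an algebraic argument, and (iii) by an analytic one.

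For (i)--(ii): the two ingredients are the resolvent representation already derived, $R_{\alpha}^{\ul{D},\ul{a}}=\alpha\int_{0}^{\infty}K_{t}^{\ul{D},\ul{a}}e^{-\alpha t}\,{\rm d}t=-\alpha(Q-\alpha I)^{-1}\delta_{e}$, and the semigroup property $K_{s}^{\ul{D},\ul{a}}\ast_{SE(2)}K_{t}^{\ul{D},\ul{a}}=K_{s+t}^{\ul{D},\ul{a}}$, which is immediate from $K_{t}^{\ul{D},\ul{a}}=e^{tQ}\delta_{e}$ and~(\ref{diffusionconvection}). Convolving $R_{\alpha}^{\ul{D},\ul{a}}$ with itself $k$ times and applying Fubini, the spatial convolutions $K_{t_{1}}^{\ul{D},\ul{a}}\ast\cdots\ast K_{t_{k}}^{\ul{D},\ul{a}}$ collapse to $K_{t_{1}+\cdots+t_{k}}^{\ul{D},\ul{a}}$ by the semigroup property, so that what remains in the time variables is the $k$-fold convolution of $t\mapsto\alpha e^{-\alpha t}$ with itself, which is the Gamma density $\Gamma(u;k,\alpha)=\alpha^{k}u^{k-1}e^{-\alpha u}/(k-1)!$. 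Hence $R_{\alpha}^{\ul{D},\ul{a}}\ast^{(k-1)}_{SE(2)}R_{\alpha}^{\ul{D},\ul{a}}=\int_{0}^{\infty}\Gamma(u;k,\alpha)\,K_{u}^{\ul{D},\ul{a}}\,{\rm d}u$, which is exactly the Green's function time-integrated against the law of $T=T_{1}+\cdots+T_{k}$ (a sum of $k$ independent $NE(\alpha)$ variables being $\Gamma(k,\alpha)$-distributed), memorylessness of each $T_{i}$ being what keeps the concatenation Markovian (cf.\ the remark after~(\ref{exponentialdistribution})). Read on $\mathbb{L}_{2}(SE(2))$, the same $k$-fold $SE(2)$-convolution is the $k$-th power of the bounded resolvent operator, giving $\alpha^{k}(\alpha I-Q)^{-k}\delta_{e}$, i.e.~(\ref{ProbabilityDensityKernel}) up to the overall sign convention fixed in the $k=1$ case; positivity of $R_{\alpha,k}^{\ul{D},\ul{a}}$ is inherited from that of $K_{u}^{\ul{D},\ul{a}}$ and of the Gamma density.

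For (iii): in the hypo-elliptic enhancement case $\ul{D}=\textrm{diag}\{D_{11},0,D_{33}\}$, $\ul{a}=\ul{0}$, the Green's function satisfies, near the identity, the Gaussian estimate $K_{t}(g)\approx\frac{1}{4\pi D_{11}D_{33}t^{2}}\exp\!\big(-\tfrac{C^{2}|g|^{2}}{4t}\big)$ with $|g|$ the weighted logarithmic modulus~(\ref{logmodulus}); this is the short-time/short-distance hypo-elliptic heat-kernel asymptotic, which I would cite from (or establish in) Appendix~\ref{app:A} by comparing the genuine Carnot--Carath\'eodory distance $d_{\mathrm{CC}}(e,g)$ --- which controls the true exponential rate --- with the explicit modulus $|g|$, the ratio $d_{\mathrm{CC}}(e,g)^{2}/|g|^{2}$ being squeezed into $[2^{-2},\sqrt{2}]$, whence the Gaussian-rate constant $C$ runs over its square root $[2^{-1},\sqrt[4]{2}]$ (one end-point for the upper bound, the other for the lower). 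Substituting into $R_{\alpha,k}(g)=\int_{0}^{\infty}\frac{\alpha^{k}t^{k-1}e^{-\alpha t}}{(k-1)!}K_{t}(g)\,{\rm d}t$, pulling out constants, and evaluating $\int_{0}^{\infty}t^{k-3}\exp\!\big(-\tfrac{C^{2}|g|^{2}}{4t}-\alpha t\big)\,{\rm d}t$ through the classical representation $\int_{0}^{\infty}t^{\nu-1}e^{-a/t-bt}\,{\rm d}t=2(a/b)^{\nu/2}\,\mathcall{K}(\nu,2\sqrt{ab})$ with $\nu=k-2$, $a=C^{2}|g|^{2}/4$, $b=\alpha$, together with the parity $\mathcall{K}(\nu,z)=\mathcall{K}(-\nu,z)$, and collecting the powers of $2$, of $\alpha$ and of $|g|C$, yields the stated closed form. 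The regularity assertion is then read off from $\mathcall{K}(2-k,z)\sim\tfrac12\,\Gamma(k-2)\,(z/2)^{2-k}$ as $z\downarrow0$: for $k\ge3$ this cancels the $(|g|C)^{k-2}$ prefactor exactly, so $R_{\alpha,k}(e)<\infty$, while for $k=2$ (resp.\ $k=1$) the product diverges logarithmically (resp.\ like $|g|^{-1}$).

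\textbf{Main obstacle.} The algebra in (i)--(ii) is routine once the semigroup identity and the resolvent formula are granted, and the integral in (iii) is a table evaluation of a modified Bessel function; the genuine work is the first ingredient of (iii), namely a two-sided Gaussian bound for $K_{t}(g)$ uniform for small $t$ \emph{and} small $|g|$, with constants tight enough to pin $C$ to $[2^{-1},\sqrt[4]{2}]$, together with the check that the error terms of that bound contribute only lower-order corrections after integration against $\Gamma(\cdot;k,\alpha)$ rather than competing with the leading Bessel term as $|g|\to0$. That amounts to controlling the hypo-elliptic kernel near the origin and matching the Carnot--Carath\'eodory distance to the explicit modulus~(\ref{logmodulus}) --- precisely what one would carry out in Appendix~\ref{app:A}.
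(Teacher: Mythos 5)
Your proposal is correct and follows essentially the same route as the paper: the operator/convolution identity comes from the semigroup property of $K_t^{\ul{D},\ul{a}}$ together with the fact that the $(k-1)$-fold time-convolution of $\alpha e^{-\alpha t}$ is the Gamma density (the paper phrases this via conditional probabilities, Laplace transforms and induction on $k$, but the content is identical), and the asymptotics come from substituting the weighted Gaussian estimate for the hypo-elliptic heat kernel and evaluating the resulting integral as a modified Bessel function. The one ingredient you flag as the "main obstacle" --- the two-sided Gaussian bound with the modulus (\ref{logmodulus}) and the constant $C\in[2^{-1},\sqrt[4]{2}]$ --- is exactly what the paper does not prove either, but imports from the theory of weighted sub-coercive operators on Lie groups \cite{TerElst} via the filtration weights $w_1=w_3=1$, $w_2=2$ and the logarithmic coordinates on $SE(2)$.
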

	\textbf{Proof }
	We consider a random traveling time $T=\sum_{i=1}^{n} T_{i}$ in an
	$n$-step approach random process $G_{T}=\sum_{i=1}^{N}G_{T_i}$ on $SE(2)$,
	with $G_{T_i}$ independent random random walks whose Fokker-Planck equations are given by (\ref{diffusionconvection}), and with independent traveling times $T_{i} \sim NE(\alpha)$ (i.e. $P(T_{i}=t)=f(t):=\alpha e^{-\alpha t}$).
	Then for $k \geq 2$ we have $T \sim f *_{\R^{+}}^{k-1} f=\Gamma(\cdot; k,\alpha)$, (with $f*_{\R^+}g(t)=\int_{0}^{t} f(\tau)g(t-\tau)\,{\rm d}\tau$),
	which follows by consideration of the characteristic function and application of Laplace transform $\mathcall{L}$.
	
	We have $\alpha^{k}(Q-\alpha I)^{-k}=(\alpha (Q-\alpha I)^{-1})^k$, and for $k=2$ we have
	identity
	\[
	\begin{array}{l}
	R_{\alpha,k=2}^{\ul{D},\ul{a}}(\ul{x},\theta)
	=\int \limits_{0}^{\infty} p(G_{T}=(\ul{x},\theta) | T=t, G_{0}=e)\cdot p(T=t)\, {\rm d}t \\
	=\int \limits_{0}^{\infty} p(G_{T}=(\ul{x},\theta) \; |\; T=T_{1}+T_{2}=t, G_{0}=e)\cdot  p(T_{1}+T_{2}=t) \, {\rm d}t \\
	=\int \limits_{0}^{\infty} \int \limits_{0}^{t} p(G_{T_{1}+T_2}=(\ul{x},\theta) \; |\; T_{1}=t-s, T_{2}=s, G_{0}=e)\cdot
	p(T_{1}=t-s)\; p(T_{2}=s) \, {\rm d}s {\rm d}t \\
	=\alpha^2 \, \mathcall{L}\left(t \mapsto \int \limits_{0}^{t} (K_{t-s}^{\ul{D},\ul{a}}*_{SE(2)}K_{s}^{\ul{D},\ul{a}} *_{SE(2)} \delta_{e})(\ul{x},\theta) {\rm d}s\right)(\alpha)\\
	= \alpha^2 \, \mathcall{L}\left(t \mapsto \int \limits_{0}^{t} (K_{t-s}^{\ul{D},\ul{a}}*_{SE(2)} K_{s}^{\ul{D},\ul{a}} )(\ul{x},\theta) {\rm d}s\right)(\alpha) \\
	= \alpha^2 \, \left(\mathcall{L}\left(t \mapsto K_{t}^{\ul{D},\ul{a}}(\cdot)\right)(\alpha) *_{SE(2)}\mathcall{L}\left(t \mapsto K_{t}^{\ul{D},\ul{a}}(\cdot)\right)(\alpha)\right)(\ul{x},\theta)
	= (R_{\alpha,k=1}^{\ul{D},\ul{a}}*_{SE(2)}R_{\alpha,k=1}^{\ul{D},\ul{a}})(\ul{x},\theta).
	\end{array}
	\]
	Thereby main result Eq.~\!(\ref{ProbabilityDensityKernel}) follows by induction.
	
	Result (\ref{enhass}) follows by direct computation and application of the theory of weighted
	sub-coercive operators on Lie groups \cite{TerElst} to the $SE(2)$ case. We have filtration $\gothic{g}_0:=
	\textrm{span}\{\mathcall{A}_{1},\mathcall{A}_{3}\}$,
	and $\gothic{g}_{1}=[\gothic{g}_0,\gothic{g}_0]=\textrm{span}\{\mathcall{A}_{1},\mathcall{A}_{2},\mathcall{A}_{3}\}=\mathcall{L}(SE(2))$,
	so $w_1=1$, $w_3=1$ and $w_{2}=2$ and computation of the logarithmic map on $SE(2)$,
	$g=e^{\sum_{i=1}^{3}c^{i} A_{i}} \desda \sum_{i=1}^{3}c^{i} A_{i} = \log g$, yields a non-smooth logarithmic squared modulus
	locally equivalent to smooth $|g|^2$ given by (\ref{logmodulus}), see \cite[ch:5.4,eq.5.28]{DuitsAMS1}.
	$\hfill \Box$ \\
	\\
	We have the following asymptotical formula for $\mathcall{K}(n,z)$:
	\[
	\mathcall{K}(n,z)
	\approx
	\left\{
	\begin{array}{ll}
	- \log(z/2) -\gamma_{EUL} & \textrm{if }n=0 \\
	\frac{1}{2}(|n|-1)! \left( \frac{z}{2}\right)^{-|n|}
	\end{array}
	\right.
	\textrm{ for }0 < z <\!<\! 1,
	\]
	with Euler's constant $\gamma_{EUL}$,
	and thereby Eq.~(\ref{enhass}) implies the following result:
	\begin{corollary}\label{corr:X}
		If $k=1$ then $R_{\alpha,k}^{\ul{D}}(g)\equiv O(|g|^{-2})$. If $k=2$ then $R_{\alpha,k}^{\ul{D}}(g)\equiv O(\log |g|^{-1})$.
		If $k\geq 3$ then $R_{\alpha,k}^{\ul{D}}(g)\equiv O(1)$ and the kernel has no singularity.
	\end{corollary}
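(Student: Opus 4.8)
The plan is to substitute the two asymptotic regimes of $\mathcall{K}(n,z)$ for $0<z<\!<\!1$ directly into the asymptotic formula~(\ref{enhass}) for $R_{\alpha,k}(g)$, and then read off the order in $|g|$ as $|g| \to 0$. Recall from~(\ref{enhass}) that near the origin
\[
R_{\alpha,k}(g) = \frac{2^{1-k}}{\pi D_{11}D_{33}(k-1)!}\,\alpha^{k}\,\big||g|C\big|^{k-2}\;\mathcall{K}\big(2-k,\,|g|C\sqrt{\alpha}\big),
\]
so with $z=|g|C\sqrt{\alpha}$ and $n=2-k$ the whole question reduces to tracking the product $|z|^{k-2}\,\mathcall{K}(2-k,z)$ as $z\to 0^{+}$ (the constants $C\in[2^{-1},\sqrt[4]{2}]$, $\alpha$, $D_{11}$, $D_{33}$ are fixed and positive, hence irrelevant to the order).

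First I would treat $k=1$: here $n=2-k=1\neq 0$, so the second branch of the $\mathcall{K}$-asymptotics gives $\mathcall{K}(1,z)\approx \tfrac12\,(|1|-1)!\,(z/2)^{-1} = (z/2)^{-1}$, and the prefactor carries $|z|^{k-2}=|z|^{-1}$; multiplying, $|z|^{-1}\cdot z^{-1}\asymp |g|^{-2}$, giving $R_{\alpha,1}^{\ul D}(g)\equiv O(|g|^{-2})$. Next, $k=2$: now $n=2-k=0$, so the first branch applies, $\mathcall{K}(0,z)\approx -\log(z/2)-\gamma_{EUL}$, while the prefactor power is $|z|^{k-2}=|z|^{0}=1$; since $z=|g|C\sqrt{\alpha}$, we get $R_{\alpha,2}^{\ul D}(g)\asymp -\log|g| = O(\log|g|^{-1})$. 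Finally, for $k\geq 3$ we have $n=2-k\leq -1$, so again the second branch gives $\mathcall{K}(2-k,z)\approx \tfrac12(k-3)!\,(z/2)^{-(k-2)}$, i.e.\ $\mathcall{K}(2-k,z)\asymp z^{-(k-2)}$; multiplying by the prefactor power $|z|^{k-2}$ yields $|z|^{k-2}\cdot z^{-(k-2)}\asymp 1$, so $R_{\alpha,k}^{\ul D}(g)\to$ a finite nonzero constant as $|g|\to 0$, i.e.\ $R_{\alpha,k}^{\ul D}(g)\equiv O(1)$ with no singularity.

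The argument is essentially a bookkeeping exercise once~(\ref{enhass}) and the $\mathcall{K}$-asymptotics are in hand, so there is no real obstacle; the only point requiring a line of care is the borderline case $k=2$, where one must use the \emph{logarithmic} branch of $\mathcall{K}(0,z)$ rather than the power-law branch — overlooking this would wrongly predict $O(1)$ instead of $O(\log|g|^{-1})$. For full rigour one would also note that~(\ref{enhass}) is stated as an asymptotic equality for $|g|<\!<\!1$, so the conclusions are statements about the behaviour of $R_{\alpha,k}$ in a punctured neighbourhood of the identity $e\in SE(2)$, which is exactly what the big-$O$ notation in the statement refers to; away from the origin the kernels are smooth by hypo-ellipticity (H\"ormander's condition), so the only possible singularity is at $g=e$. $\hfill\Box$
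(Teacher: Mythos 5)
Your proposal is correct and follows exactly the route the paper intends: the paper states the corollary as an immediate consequence of Eq.~(\ref{enhass}) together with the quoted small-$z$ asymptotics of $\mathcall{K}(n,z)$, and your bookkeeping of the product $|z|^{k-2}\mathcall{K}(2-k,z)$ in the three cases $k=1$, $k=2$, $k\geq 3$ is precisely that argument made explicit. (A negligible slip: $\tfrac12(|1|-1)!(z/2)^{-1}=z^{-1}$, not $(z/2)^{-1}$, but this constant does not affect the order.)
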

	\begin{remark}
		As this approach also naturally extends to positive (non-integer) fractional powers $k \in \mathbb{Q}, k\geq 0$ of the resolvent operator we wrote $\Gamma(k)$ instead of $(k-1)!$ in
		Eq.~\!(\ref{GammaDistributionIntegration}). The recursion depth $k$ equals $(E(T))^2/Var(T)$, since the variance of $T$ equals $Var(T)= k/\alpha^2$.
	\end{remark}
	In Figure~\ref{fig:Gamma}, we show that increase of $k$ (while fixing $E(T)=k/\alpha$) allows for better propagation of ink towards the completion areas. The same concept applies to the contour enhancement process. Here we change time integration (using the stochastic approach outlined in Section~\ref{section:MonteCarloStochasticImplementation}) in Eq.~\!(\ref{GammaDistributionIntegration}) rather than iterating the resolvents in Eq.~\!(\ref{ProbabilityDensityKernel}) for better accuracy.
	\begin{figure}
		\centering
		\includegraphics[width=0.85\hsize]{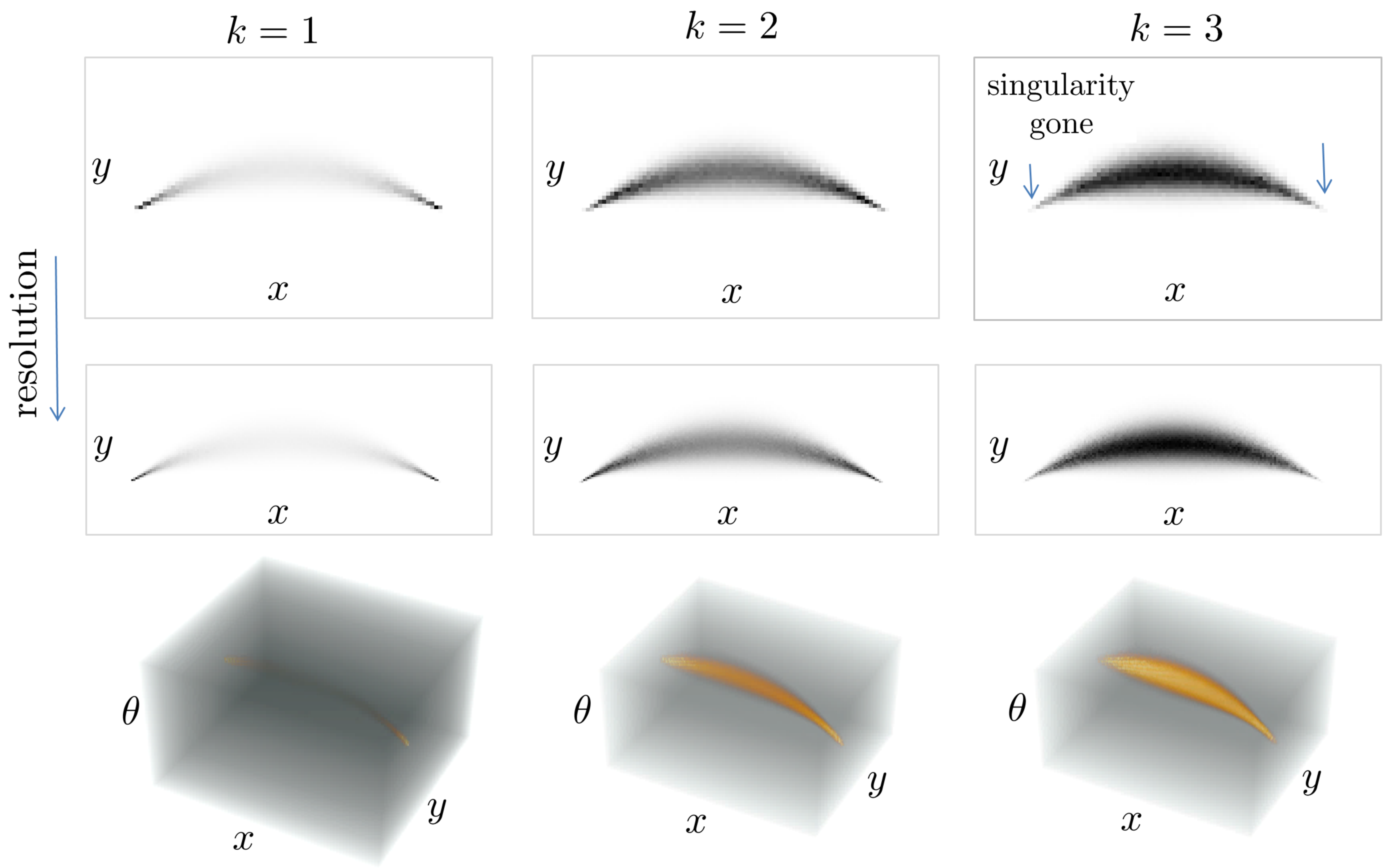}
		\caption{Illustration of Theorem~\ref{th:prob} and Corollary~\ref{corr:X}, via the stochastic implementation for the $k$-step contour completion process ($T=\sum_{i=1}^k T_{i}$) explained in Subsection~\ref{section:MonteCarloStochasticImplementation}. We have depicted the (2D marginals) of 3D completion fields \cite{Zweck} now generalized to
			$\mathcall{C}(x,y,\theta):=((Q-(\alpha k) I)^{-k}\delta_{g_{0}})(x,y,\theta) \cdot ((Q^{*}-(\alpha k) I)^{-k}\delta_{g_{1}})(x,y,\theta)$, with $Q=-\mathcall{A}_{1}+ D_{33} \mathcall{A}_{3}^2$ and with
			$g_0=(\ul{x}_0, \frac{\pi}{6})$ and $g_{1}=(\ul{x}_1, -\frac{\pi}{6})$, $\alpha=0.1$, $D_{33}=0.1$, via a rough resolution
			(on a $200\times 200 \times 32$-grid) and a finer resolution (on a $400\times 400 \times 64$-grid).
			Image intensities have been scaled to full range.
			The resolvent process $k=1$ suffers from: "the better the approximation, the less relative infilling in the completion" (cf.~left column). The singularities at $g_0$
			and $g_{1}$ vanish at $k=3$. A reasonable compromise is found at $k=2$ where infilling is stronger, and where the modes (i.e. curves $\gamma$ with $\mathcall{A}_{2}\mathcall{C}(\gamma)=\mathcall{A}_{3}C(\gamma)=0$, cf.~\cite[App.~A]{BekkersJMIV},\cite{DuitsAlmsick2008}) are easy to detect.  \label{fig:Gamma}}
	\end{figure}
	
	\subsection{Fundamental Solutions\label{section:FundamentalSolutions}}
	
	The fundamental solution $S^{\ul{D},\ul{a}}:SE(2) \to \R^{+}$ associated to generator
	$Q^{\ul{D},\ul{a}}(\mathcall{A}_1,\mathcall{A}_2,\mathcall{A}_3)$
	solves
	\begin{equation} \label{fundsolPDE}
	Q^{\ul{D},\ul{a}}(\mathcall{A}_1,\mathcall{A}_2,\mathcall{A}_3) \; S^{\ul{D},\ul{a}} =-\delta_{e}\ ,
	\end{equation}
	and is given by
	\begin{equation}\label{FundamentalSolution}
	\begin{aligned}
	S^{\ul{D},\ul{a}}(x, y, \theta) &= \int \limits_{0}^{\infty}K_{t}^{\ul{D},\ul{a}}(x,y,\theta)\, {\rm d}t =
	\left(-(Q^{\ul{D},\ul{a}}(\mathcall{A}_1,\mathcall{A}_2,\mathcall{A}_3))^{-1}\delta_e \right)(x,y,\theta) \\
	&=\lim_{\alpha \downarrow 0}\left(\frac{-\alpha(Q^{\ul{D},\ul{a}}(\mathcall{A}_1,\mathcall{A}_2,\mathcall{A}_3)-\alpha I)^{-1}}{\alpha}\delta_e \right)(x,y,\theta)
	=\lim_{\alpha \downarrow 0}\frac{R_{\alpha}^{\ul{D},\ul{a}}(x, y, \theta)}{\alpha}.\\
	\end{aligned}
	\end{equation}
	There exist many intriguing relations \cite{DuitsAMS2,Boscain2} between fundamental solutions of hypo-elliptic diffusions
	and left-invariant metrics on $SE(2)$, which make these solutions interesting. Furthermore, fundamental solutions on the nilpotent approximation $(SE(2))_{0}$ take a relatively simple explicit form \cite{Gaveau,DuitsAMS1}.
	However, by Eq.~(\ref{FundamentalSolution}) these fundamental solutions suffer from some practical drawbacks: they are not probability kernels, in fact they are not even $\mathbb{L}_1$-normalizable, and they suffer from poles in both spatial and Fourier domain. Nevertheless, they are interesting to study for the limiting case $\alpha \downarrow 0$ and they have been suggested in cortical modeling \cite{Barbieri2012,BarbieriArxiv2013}. \\
	\\
	
	\subsection{The Underlying Probability Theory}
	In this section we provide an overview of the underlying probability theory
	belonging to our PDE's of interest, given by Eq.~(\ref{diffusionconvection}), (\ref{Resolvent}) and (\ref{fundsolPDE}).
	
	We obtain the contour enhancement case by setting $\ul{D}=\textrm{diag}\{D_{11},0,D_{33}\}$ and $\ul{a}=\ul{0}$. Then, by application of Eq.~(\ref{leftInvariantDerivatives}), Eq.~(\ref{diffusionconvection}) becomes the forward Kolmogorov equation
	\begin{equation} \label{StochasticEnhancementEvolution}
	\left\{
	\begin{aligned}
	&\partial_t W(g,t)=(D_{11}\partial_\xi^2+D_{33}\partial_\theta^2)W(g,t),\\
	&W(g,t=0)=U(g)\\
	\end{aligned} \right.
	\end{equation}
	of the following stochastic process for contour enhancement:
	\begin{equation} \label{StochasticEnhancementProcess}
	\left\{\begin{aligned}
	&\ul{X}(t)=\ul{X}(0)+\sqrt{2D_{11}}\varepsilon_\xi\int^t_0(\cos\Theta(\tau)\ul{e}_x+\sin\Theta(\tau)\ul{e}_y)\frac{1}{2\sqrt{\tau}}\,{\rm d}\tau\\
	&\Theta(t)=\Theta(0)+\sqrt{t}\sqrt{2D_{33}}\varepsilon_\theta,\qquad\varepsilon_\xi,\varepsilon_\theta\thicksim\mathcall{N}(0,1)\\
	\end{aligned} \right.
	\end{equation}
	
	For contour completion, we must set the diffusion matrix $\ul{D}=\textrm{diag}\{0,0,D_{33}\}$ and convection vector $\ul{a}=(1,0,0)$. In this case Eq.~(\ref{diffusionconvection}) takes the form
	\begin{equation} \label{StochasticCompletionEvolution}
	\left\{
	\begin{aligned}
	&\partial_t W(g,t)=(\partial_\xi+D_{33}\partial_\theta^2)W(g,t),\qquad g\in SE(2), t>0,\\
	&W(g,t=0)=U(g).\\
	\end{aligned} \right.
	\end{equation}
	This is the Kolmogorov equation of Mumford's direction process \cite{Mumford}
	\begin{equation} \label{eq:MumfordDirectionProcess}
	\left\{\begin{aligned}
	&\ul{X}(t)=X(t)\ul{e}_x+Y(t)\ul{e}_y=\ul{X}(0)+\int^t_0 \cos\Theta(\tau)\ul{e}_x+\sin\Theta(\tau)\ul{e}_y\,{\rm d}\tau\\
	&\Theta(t)=\Theta(0)+\sqrt{t}\sqrt{2D_{33}}\varepsilon_\theta,\qquad\varepsilon_\theta\thicksim\mathcall{N}(0,1)\\
	\end{aligned} \right.
	\end{equation}
	
	\begin{remark}
		As contour completion processes aim to reconstruct the missing parts of interrupted contours based on the contextual information of the data, a positive direction $\ul{e}_{\xi}=\cos(\theta)\ul{e}_x+\sin(\theta)\ul{e}_y$ in the spatial plane is given to a random walker.
		On the contrary, in contour enhancement processes a bi-directional movement of a random walker along $\pm\ul{e}_{\xi}$ is included for noise removal by anisotropic diffusion.
	\end{remark}
	The general stochastic process on $SE(2)$ underlying Eq.~(\ref{diffusionconvection}) is :
	{\small
		\begin{equation} \label{eq:form}
		\left\{
		\begin{array}{l}
		G_{n+1}:=(X_{n+1},\Theta_{n+1})=G_n + \Delta t \sum \limits_{i \in I} a_{i} \left.\ul{e}_{i}\right|_{G_n}  +\sqrt{\Delta t}\sum \limits_{i \in I} \epsilon_{i, n+1}\,\sum \limits_{j \in I} \sigma_{ji}\,
		\left. \ul{e}_{j}\right|_{G_n},  \\
		G_{0}=(X_{0},\Theta_{0}),
		\end{array}
		\right.
		\end{equation}
	}
	with $I = \{1,2,3\} $ in the elliptic case and $I = \{1,3\}$ in the hypo-elliptic case and where $n =1,\ldots, N-1$, $N \in \mathbb{N}$ denotes the number of steps with stepsize $\Delta t >0$, $\sigma=\sqrt{2D}$ is the unique symmetric positive definite matrix such that $\sigma^2=2D$, $\{\epsilon_{i, n+1}\}_{i \in I, n =1,\ldots, N-1 }$ are independent normally distributed \mbox{$\epsilon_{i, n+1} \sim \mathcall{N}(0,1)$} and {\small $\left. \ul{e}_{1} \right|_{G_{n}}=(\cos \Theta_{n},\sin \Theta_{n},0)$, $\left. \ul{e}_{2} \right|_{G_{n}}=(-\sin \Theta_{n},\cos \Theta_{n},0)$, and $\left. \ul{e}_{3} \right|_{G_{n}}=(0,0,1)$}. In case $I = \{1,2,3\}$, Eq.~(\ref{eq:form}) boils down to:
	{
		\begin{equation}
		\begin{array}{l}
		\begin{array}{l}
		\left(
		\begin{array}{c}
		X_{n+1} \\
		Y_{n+1} \\
		\Theta_{n+1}
		\end{array}
		\right)=
		\left(
		\begin{array}{c}
		X_{n} \\
		Y_{n} \\
		\Theta_{n}
		\end{array}
		\right)+
		\Delta t \,
		{\rm R}_{\Theta_n}
		\left(
		\begin{array}{c}
		a_{1} \\
		a_{2} \\
		a_{3}
		\end{array}
		\right)
		+
		\sqrt{\Delta t}\,
		({\rm R}_{\Theta_n})^{T} \,
		\sigma \,
		\rm{ R}_{\Theta_n}
		\left(
		\begin{array}{c}
		\epsilon_{1,n+1} \\
		\epsilon_{2,n+1} \\
		\epsilon_{3,n+1}
		\end{array}
		\right),\\
		\textrm{ with }{\rm R}_{\theta}=
		\left(
		\begin{array}{ccc}
		\cos \theta & -\sin \theta & 0 \\
		\sin \theta & \cos \theta & 0 \\
		0 & 0 & 1
		\end{array}
		\right).
		\end{array}
		\end{array}
		\end{equation}
	}
	See Figure~\ref{figure:StochasticRandomWalkerCompletionEnhancementResult} for random walks of the Brownian motion and the direction process in $SE(2)$.
	\begin{figure}[!htbp]
		\centering
		\includegraphics[width=0.7\hsize]{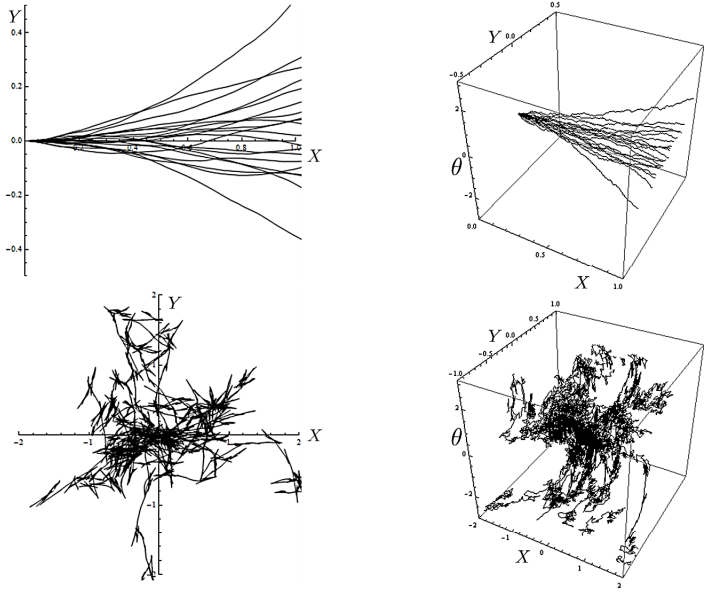}
		\caption{From left to right: Up row: 20 random walks of the direction process for contour completion in $SE(2)=\R^2 \rtimes S^1$ by Mumford \cite{Mumford} with $\ul{a}=(1,0,0)$, $D_{33}=0.3$, time step $\triangle t$=0.005 and 1000 steps. Bottom row: 20 random walks of the linear left-invariant stochastic processes for contour enhancement within $SE(2)$ with parameter settings $D_{11}=D_{33}=0.5$ and $D_{22}=0$, time step $\triangle t$=0.05 and 1000 steps.}
		\label{figure:StochasticRandomWalkerCompletionEnhancementResult}
	\end{figure}
	
	\section{Implementation} \label{section:Implementation}

	\subsection{Left-invariant Differences} \label{section:Left-invariantDifferences}
	
	\subsubsection{Left-invariant Finite Differences with B-Spline Interpolation} \label{section: Left-invariant Finite Differences with B-spline Interpolation}
	As explained in Section \ref{section:The Euclidean Motion Group $SE(2)$ and Group Representations}, our diffusions must be left-invariant. Therefore, a new grid template based on the left-invariant frame $\{\ul{e}_\xi,\ul{e}_\eta,\ul{e}_\theta\}$, instead of the fixed frame $\{\ul{e}_x,\ul{e}_y,\ul{e}_\theta\}$, need to be used in the finite difference methods.
	\begin{figure}[!htbp]
		\centering
		\subfloat{\includegraphics[width=0.9\hsize]{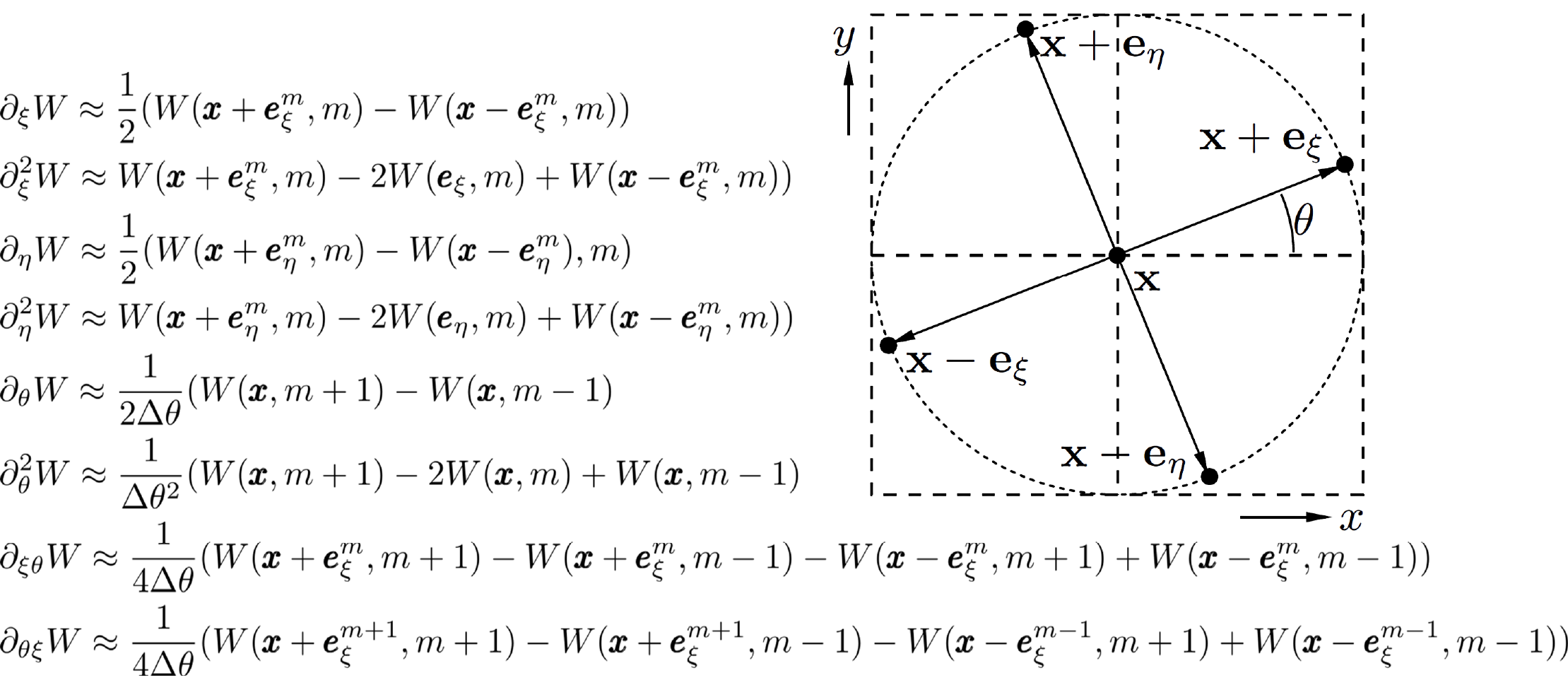}}
		\caption{Illustration of the spatial part of the stencil of the numerical scheme. The horizontal and vertical dashed lines indicate the sampling grid, which is aligned with $\{\ul{e}_x,\ul{e}_y\}$. The black dots, which are aligned with the rotated left-invariant coordinate system $\{\ul{e}_\xi,\ul{e}_\eta\}$ with $\theta=m \cdot \Delta\theta$, where $m \in \{0,1,...,N_o-1\}$ denotes the sampled orientation equidistantly sampled with distance $\Delta \theta = \frac{2\pi}{N_o}$.}
		\label{fig:finiteDifferenceScheme}
	\end{figure}
	To understand how left-invariant finite differences are implemented, see Figure~\ref{fig:finiteDifferenceScheme}, where 2nd order B-spline interpolation \cite{Unser1993} is used to approximate off-grid samples.
	The main advantage of this left-invariant finite difference scheme is the improved rotation invariance compared to finite differences applied after expressing the PDE's in fixed $(x,y,\theta)$-coordinates, such as in \cite{Boscain2,FrankenPhDThesis,Zweck}. This advantage is clearly demonstrated in \cite[Fig.~10]{Franken2009IJCV}. The drawback, however, is the low computational speed and a small amount of additional blurring caused by the interpolation scheme \cite{FrankenPhDThesis}.
	
	\subsection{Left-invariant Finite Difference Approaches for Contour Enhancement and Completion}
	\label{section:Left-invariant Finite Difference Approaches for Contour Enhancement}
	Eq.~(\ref{StochasticEnhancementEvolution}) of the contour enhancement process and Eq.~(\ref{StochasticCompletionEvolution}) of the contour completion process show us respectively the Brownian motion and direction process of oriented particles moving in $SE(2)\equiv \R^2 \rtimes S^1$. Next we will provide and analyze finite difference schemes for both processes.
	
	\subsubsection{Explicit Scheme for Linear Contour Enhancement and Completion}\label{section:ExplicitSchemeforLinearContourEnhancementCompletion}
	We can represent the explicit numerical approximations of the contour enhancement process and contour completion process by using the generator
	$Q^{\ul{D},\ul{a}}(\mathcall{A}_1,\mathcall{A}_2,\mathcall{A}_3)$ in a general form, i.e. $Q^{\ul{D},\ul{a}}(\mathcall{A}_1,\mathcall{A}_2,\mathcall{A}_3) = (D_{11}\mathcall{A}_1^2+D_{33}\mathcall{A}_3^2) = (D_{11}\partial_\xi^2+D_{33}\partial_\theta^2)$ for the diffusion process and $Q^{\ul{D},\ul{a}}(\mathcall{A}_1,\mathcall{A}_2,\mathcall{A}_3)=(\partial_\xi+D_{33}\partial_\theta^2)$ for
	the convection-diffusion process, which yield the following forward Euler discretization:
	\begin{align} \label{forwardEuler}
	\left\{ \begin{aligned}
	&W(g,t+\Delta t)=W(g,t)+\Delta t \, Q^{\ul{D},\ul{a}}(\mathcall{A}_1,\mathcall{A}_2,\mathcall{A}_3) \, W(g,t),\\
	&W(g,0)=U_f(g).\\
	\end{aligned} \right.
	\end{align}
	We take the centered 2nd order finite difference scheme with B-spline interpolation as shown in Figure~\ref{fig:finiteDifferenceScheme} to numerically approximate the diffusion terms $(D_{11}\partial_\xi^2+D_{33}\partial_\theta^2)$, and use upwind finite differences for $\partial_\xi$. In the forward Euler discretization, the time step $\Delta t$ is critical for the stability of the algorithm. Typically, the convection process and the diffusion process have different properties on the step size $\Delta t$. The convection requires time steps equal to the spatial grid size ($\Delta t=\Delta x$) to prevent the additional blurring due to interpolation, while the diffusion process requires sufficiently small $\Delta t$ for stability, as we show next. In this combined case, we simulate the diffusion process and convection process alternately with different step size $\Delta t$ according to the splitting scheme in \cite{Creusen2013}, where half of the diffusion steps are carried out before one step convection, and half after the convection.
	
	The resolvent of the (convection-)diffusion process can be obtained by integrating and weighting each evolution step with the negative exponential distribution in Eq.~(\ref{exponentialdistribution}). We set the parameters $\ul{a}=(1,0,0)$ and $\ul{D}=\textrm{diag} \{1,0,D_{33}\}$ with $D_{33}=\frac{D_{33}}{D_{11}}\approx0.01$ to avoid too much blurring on $S^{1}$.
	\begin{remark}
		Referring to the stability analysis of Franken et al.\cite{Franken2009IJCV} in the general gauge frame setting, we similarly obtain: $\Delta t \leq \frac{1}{2(1+\sqrt{2}+\frac{1}{q^2})}$ in our case of normal left-invariant derivatives.
		For a typical value of $q=\frac{\Delta\theta}{\beta}=\frac{(\pi/24)}{0.1}$ in our convention with $\beta^2:=\frac{D_{33}}{D_{11}} = 0.01$, in which $D_{33} = 0.01$ and $D_{11} = 1$, cf.~\cite{DuitsJMIV2014b}, we obtain stability bound $\Delta t \leq 0.16$ in the case of contour enhancement Eq.~(\ref{StochasticEnhancementEvolution}).
	\end{remark}
	
	\subsubsection{Implicit Scheme for Linear Contour Enhancement and Completion}
	The implicit scheme of the contour enhancement and contour completion is given by:
	\begin{align} \label{ImplicitScheme}
	\left\{ \begin{aligned}
	&W(g,t+\Delta t)=W(g,t)+\Delta t \, Q^{\ul{D},\ul{a}}(\mathcall{A}_1,\mathcall{A}_2,\mathcall{A}_3) \, W(g,t+\Delta t),\\
	&W(g,0)=U_f(g).\\
	\end{aligned} \right.
	\end{align}
	Then, the equivalent discretization form of the Euler equation can be written as:
	\begin{align} \label{DiscretizationImplicitScheme}
	\left\{ \begin{aligned}
	&\ul{w}^{s+1}=\ul{w}^s+\hat{\ul{Q}}\ul{w}^{s+1},\\
	&\ul{w}^1=\ul{u},\\
	\end{aligned} \right.
	\end{align}
	in which $\hat{\ul{Q}} \equiv \Delta t (Q^{\ul{D},\ul{a}}(\mathcall{A}_1,\mathcall{A}_2,\mathcall{A}_3))$, and $\ul{w}^s$ is the solution of the PDE at $t=(s-1)\Delta t, s \in \{1,2,...\}$, with the initial state $\ul{w}^1=\ul{u}$. According to the conjugate gradient method as shown in \cite{Creusen2013}, we can approximate the obtained linear system $(\ul{I}-\hat{\ul{Q}})\ul{w}^{s+1}=\ul{w}^s$ iteratively without evaluating matrix $\hat{\ul{Q}}$ explicitly. The advantage of an implicit method is that it is unconditionally stable, even for large step sizes.
	
	\subsection{Numerical Fourier Approaches \label{section:Duitsmatrixalgorithm}}
	
	The following numerical scheme is a generalization of the numerical scheme proposed by Jonas August for the direction process \cite{August}.
	An advantage of this scheme over others, such as the algorithm by Zweck et al. \cite{Zweck} or other finite difference schemes \cite{Franken2009IJCV}, is that (as we will show later in Theorem \ref{th:RelationofFourierBasedWithExactSolution}) it is directly related to the exact analytic solutions (approach 1) presented in Section~\ref{3GeneralFormsExactSolutions}.
	
	The goal is to obtain a numerical approximation of the exact solution of
	\begin{equation} \label{theeqn}
	\alpha(\alpha I-Q^{\ul{D},\ul{a}}(\underline{\mathcall{A}}))^{-1}U=P, \,   U \in \mathbb{L}_{2}(G), \quad \textit{with} \quad \underline{\mathcall{A}}=(\mathcall{A}_1,\mathcall{A}_2,\mathcall{A}_3),
	\end{equation}
	where the generator $Q^{\ul{D},\ul{a}}(\underline{\mathcall{A}})$ is given in the general form Eq.~(\ref{diffusionconvectiongenerator})
	without further assumptions on the parameters $a_{i}>0$, $D_{ii}>0$. Recall that its solution is given by $SE(2)$-convolution with the corresponding kernel. First we write
	\begin{equation} \label{ansatz}
	\begin{array}{l}
	\mathcall{F}[P(\cdot,e^{i\theta})](\www)=\hat{P}(\www,e^{i\theta})= \sum \limits_{l=-\infty}^{\infty} \hat{P}^{l}(\www) e^{i l \theta}, \\
	\mathcall{F}[U(\cdot,e^{i\theta})](\www)=\hat{U}(\www,e^{i\theta})= \sum \limits_{l=-\infty}^{\infty} \hat{U}^{l}(\www) e^{i l \theta}. \\
	\end{array}
	\end{equation}
	Then by substituting (\ref{ansatz}) into (\ref{theeqn}) we obtain the following 4-fold recursion
	{\small
		\begin{equation} \label{5recursion}
		\begin{array}{l}
		(\alpha \!+\!l^2 D_{33}\!+\! i\, a_{3} l+\frac{\rho^2}{2}(D_{11}+D_{22}))\hat{P}^{l}(\www) + \frac{ a_1(i\, \omega_x
			\!+\! \omega_{y})\!+\!a_2(i \, \omega_y \!-\!\omega_{x})}{2} \hat{P}^{l-1}(\www)\\+\frac{ a_1(i\, \omega_x\!-\! \omega_{y})+a_2(i \, \omega_y \!+\!\omega_{x})}{2} \hat{P}^{l+1}(\www)
		-
		\frac{ D_{11}(i\, \omega_x\!+\! \omega_{y})^2\!+\!D_{22}(i \, \omega_y \!-\!\omega_{x})^2}{4}
		\hat{P}^{l-2}(\www) \\
		-
		\frac{ D_{11}(i\, \omega_x\!-\! \omega_{y})^2+D_{22}(i \, \omega_y \!+\! \omega_{x})^2}{4}
		\hat{P}^{l+2}(\www) = \alpha \, \hat{U}^{l}(\www),
		\end{array}
		\end{equation}
	}
	which can be rewritten in polar coordinates
	\begin{equation} \label{recurs}
	\begin{array}{l}
	(\alpha + i l a_3 +D_{33}l^2+ \frac{\rho^2}{2}(D_{11}+D_{22})) \, \tilde{P}^{l}(\rho)+ \frac{\rho}{2}(i a_{1}-a_2)\, \tilde{P}^{l-1}(\rho)+ \\
	\frac{\rho}{2}(i a_{1}+a_2) \, \tilde{P}^{l+1}(\rho) + \frac{\rho^2}{4}(D_{11}-D_{22})\, (\tilde{P}^{l+2}(\rho)+\tilde{P}^{l-2}(\rho))=
	\alpha \, \tilde{U}^{l}(\rho)
	\end{array}
	\end{equation}
	for all $l=0,1,2,\ldots$ with $\tilde{P}^{l}(\rho) = e^{il\varphi} \hat{P}^{l}(\www)$ and $\tilde{U}^{l}(\rho) = e^{il\varphi} \hat{U}^{l}(\www)$, with
	$\www=(\rho \cos \varphi, \rho \sin \varphi)$.
	Equation (\ref{recurs}) can be written in matrix-form, where a 5-band matrix must be inverted. For explicit representation
	of this 5-band matrix where the spatial Fourier transform in (\ref{ansatz}) is replaced by the $\textbf{DFT}$ we refer to \cite[p.230]{DuitsPhDThesis}. Here we stick to a Fourier series on $\mathbb{T}$, $\textbf{CFT}$ on $\R^2$ and  truncation of the series at $N \in \mathbb{N}$ which yields the
	$(2N+1) \times (2N+1)$ matrix equation:
	\begin{equation} \label{MatrixInverse}
	{\tiny \left(
		\begin{array}{ccccccc}
		p_{-N} & q+t & r & 0 & 0 & 0 & 0 \\
		q-t & p_{-N+1} & q+t & r & 0 & 0 & 0  \\
		r & \ddots & \ddots & \ddots &  r & 0 & 0 \\
		0 & \ddots & q-t & p_{0} & q+t & r & 0 \\
		0 & 0 & r  & \ddots & \ddots & \ddots & r \\
		0 & 0 & 0  & r & q-t & p_{N-1} & q+t \\
		0 & 0 & 0 & 0  & r & q-t & p_{N}
		\end{array}
		\right)
		\left(
		\begin{array}{c}
		\tilde{P}^{-N}(\rho) \\
		\tilde{P}^{-N+1}(\rho) \\
		\vdots \\
		\tilde{P}^{0}(\rho) \\
		\vdots \\
		\tilde{P}^{N-1}(\rho)
		\\
		\tilde{P}^{N}(\rho)
		\end{array}
		\right)=
		\frac{4 \alpha}{ D_{11}} \!
		\left(
		\begin{array}{c}
		\tilde{U}^{-N}(\rho) \\
		\tilde{U}^{-N+1}(\rho) \\
		\vdots \\
		\tilde{U}^{0}(\rho) \\
		\vdots \\
		\tilde{U}^{N-1}(\rho)
		\\
		\tilde{U}^{N}(\rho)
		\end{array}
		\right)
	}
	\end{equation}
	where $p_{l}= (2l)^2 + \frac{4 \alpha + 2 \rho^2(D_{11}+D_{22})+4 i a_{3} l}{D_{33}}$, $r=\frac{\rho^2(D_{11}-D_{22})}{D_{33}}$, $q= \frac{2 \rho a_{1}i}{D_{33}}$ and $t= \frac{2 a_2 \rho}{D_{33}}.$
	\begin{remark}\label{rem:41}
		The four-fold recursion Eq.~(\ref{recurs}) is uniquely determined by $\tilde{P}_{-N-1}=0, \tilde{P}_{-N-2}=0$, $\tilde{P}_{N+1}=0, \tilde{P}_{N+2}=0$, which is applied in Eq.~(\ref{MatrixInverse}).
	\end{remark}
	\begin{remark}\label{rem:42}
		When applying the Fourier transform on $SE(2)$ to the PDE's of interest, as done in \cite{DuitsAlmsick2008,Boscain3,Boscain2}, one obtains a fully isomorphic 5-band matrix system as pointed out in \cite[App.A, Lemma A.1, Thm A.2]{DuitsAlmsick2008}, the basic underlying coordinate transition to be applied is given by
		\[
		(p,\phi)= (\rho,\varphi - \theta)
		\]
		where $p$ indexes the irreducible representations of $SE(2)$ and $\phi$
		denotes the angular argument of the $p$-th irreducible function subspace $\mathbb{L}_{2}(S^{1})$ on which
		the $p$-th irreducible representation acts. For further details see \cite[App.A]{DuitsAlmsick2008} and \cite{Chirikjian}.
	\end{remark}
	In \cite{DuitsAlmsick2008}, we showed the relation between spectral decomposition of this matrix (for $N \to \infty$) and the exact solutions of contour completion. In this paper we do the same for the contour enhancement case in Section \ref{section:FourierBasedForEnhancement}.
	
	\subsection{Stochastic Implementation}\label{section:MonteCarloStochasticImplementation}
	In a Monte-Carlo simulation as proposed in \cite{Gonzalo,BarbieriArxiv2013}, we sample the stochastic process (Eq.~\!(\ref{eq:form})) such that we obtain the kernels for our linear left-invariant diffusions. In particular the kernel of the contour enhancement process, and the kernel for the contour completion process. Figure~\ref{figure:MentoCarloSimulation} shows the xy-Marginal of the enhancement and the completion kernel, which were obtained by counting the number of paths crossing each voxel in the orientation score domain. In addition, the length of each path follows a negative exponential distribution.
	Within Figure~\ref{figure:MentoCarloSimulation} we see, for practically reasonable parameter settings, that increasing the number of sample paths to 50000 already provides a reasonable approximation of the exact kernels.
	In addition, each path was weighted using the negative exponential distribution with respect to time in Eq.~\!(\ref{exponentialdistribution}), in order to obtain the resolvent kernels.
	\begin{figure}[!htb]
		\centering
		{\includegraphics[width=\textwidth]{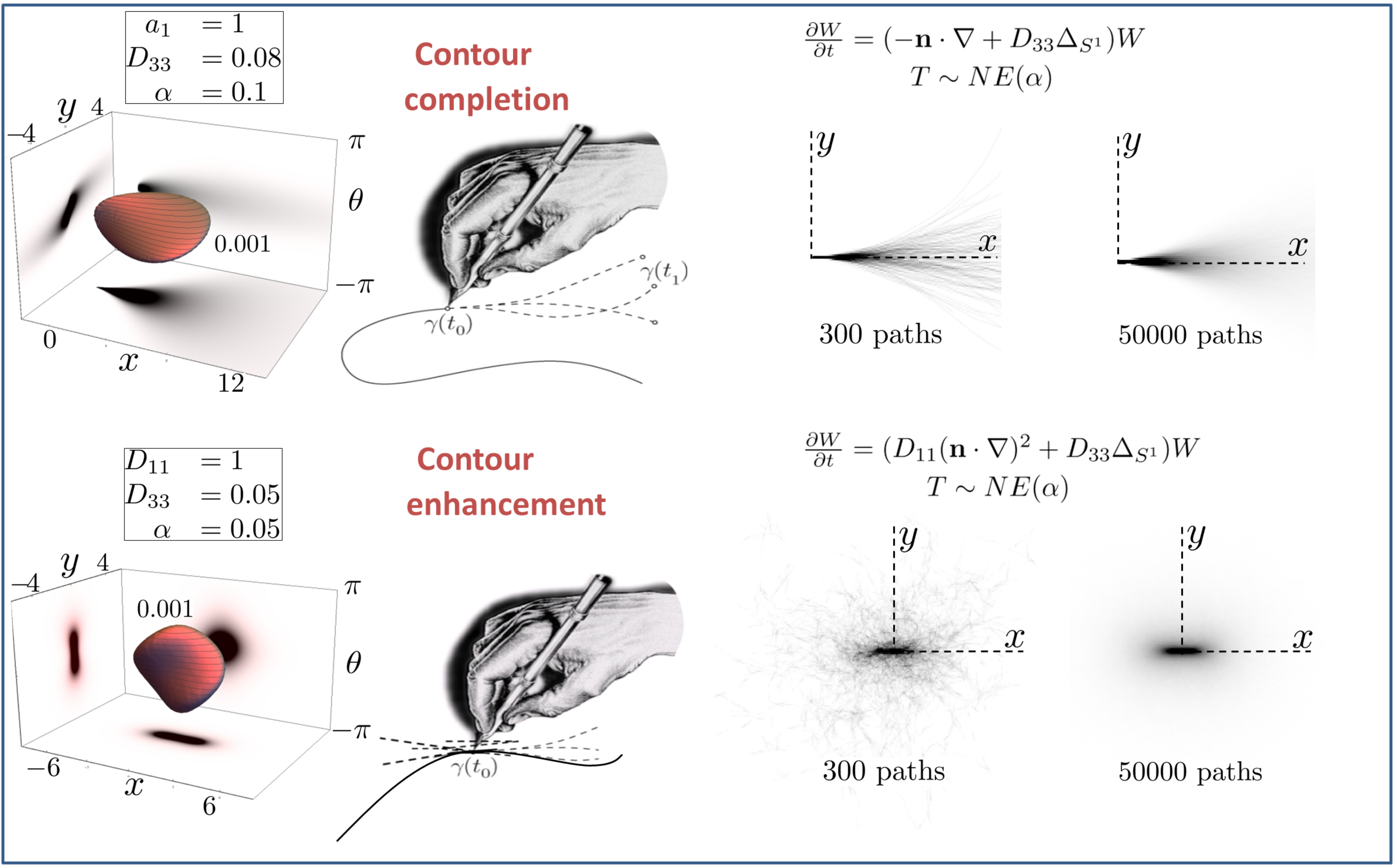}}
		\caption{Stochastic random process for the contour enhancement kernel (top) and stochastic random process for the contour completion raw kernel (bottom). Both processes are obtained via Monte Carlo simulation of random process
			(\ref{eq:form}). In contour completion, we set step size $\Delta t=0.05, \alpha=10, D_{11}=D_{33}=0.5$, and $D_{22}=0$. In contour completion, we set step size $\Delta t=0.005, \alpha=5, D_{33}=1$, and $\ul{a}=(1,0,0)$.}
		\label{figure:MentoCarloSimulation}
	\end{figure}
	The implementation of the $k$-fold resolvent kernels is obtained by application of Theorem~\ref{th:prob}, i.e. by imposing a Gamma distribution instead of a negatively exponential distribution. Here stochastic implementations become slower as one can no longer rely on the memoryless property of the negatively exponential distribution, which means one should only take the end-condition of each sample path $G_T$ after a sampling of random traveling time $T\sim\Gamma(t;k,\alpha)$. Still such stochastic implementations are favorable (in view of the singularity) over the concatenation of $SE(2)$-convolutions of the resolvent kernels with themselves.
	
	\section{Implementation of the Exact Solution in the Fourier and the Spatial Domain and their Relation to Numerical Methods}\label{section:Comparison}
	In previous works by Duits and van~Almsick \cite{DuitsCASA2005,DuitsCASA2007,DuitsAlmsick2008}, three methods were applied producing three different exact representations for the kernels (or "Green's functions") of the forward Kolmogorov equations of the contour completion process:
	\begin{enumerate}
		\item The first method involves a spectral decomposition of the bi-orthogonal generator in the $\theta$-direction for each fixed spatial frequency $(\omega_{x},\omega_y)=(\rho \cos\varphi, \rho \sin\varphi) \in \R^{2}$ which is an unbounded Mathieu operator, producing a (for reasonably small times $t>0$) \emph{slowly converging} Fourier series representation. Disadvantages include the Gibbs phenomenon. Nevertheless, the Fourier series representation in terms of \emph{periodic} Mathieu functions directly relates to the numerical algorithm proposed by August in \cite{August}, as shown in \cite[ch:5]{DuitsAlmsick2008}. Indeed the Gibbs phenomenon appears in this algorithm as the method requires some smoothness of data: running the algorithm on a sharp discrete delta-spike provides Gibbs-oscillations. The same holds for Fourier transform on $SE(2)$ methods \cite{DuitsAlmsick2008,Boscain3,Boscain2}, recall Remark \ref{rem:42}.
		\item The second method unwraps for each spatial frequency the circle $S^{1}$ to the real line $\R$, to solve the Green's function with absorbing boundary conditions at infinity which results in a quickly converging series in rapidly decaying terms expressed in \emph{non-periodic} Mathieu functions. There is a nice probabilistic interpretation: The $k$-th number in the series reflects the contribution of sample-paths in a Monte-Carlo simulation, carrying homotopy number $k \in \mathbb{Z}$, see Figure~\ref{fig:K0K1K2}.
		\item The third method applies the Floquet theorem on the resulting series of the second method and application of the geometric series produces a formula involving only 4 Mathieu functions \cite{DuitsAlmsick2008,MarkusThesis}.
	\end{enumerate}
	We briefly summarize these results in the general case and then we provide the end-results of the three approaches for respectively the contour enhancement case and the contour completion case in the theorems below.
	In Figure~\ref{fig:EnhancementKernel}, we show an illustration of an exact resolvent enhancement kernel and an exact fundamental solution and their marginals.
	\begin{figure}[!htb]
		\centerline{
			\includegraphics[width=0.9\hsize]{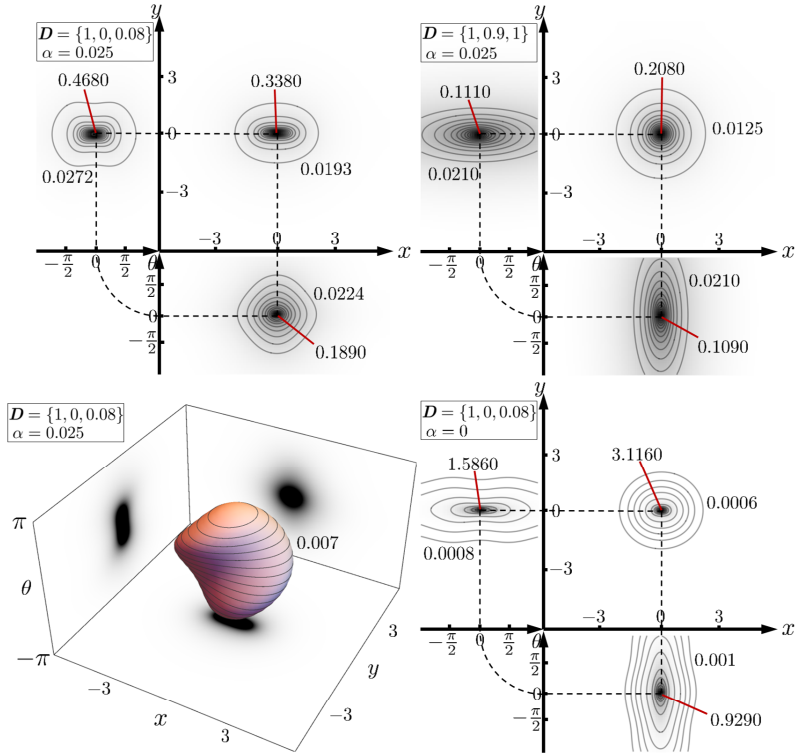}
		}
		\caption{Top row, left: The three marginals of the exact Green's function $R_{\alpha}^{\ul{D}}$ of the resolvent process where $\ul{D} = \textrm{diag}\{D_{11},0,D_{33}\}$ with parameter settings {\small $\alpha=0.025$ and $\ul{D}=\{1,0,0.08\}$}.
			right: The isotropic case of the exact Green's function $R_{\alpha}^{\ul{D}}$ of the resolvent process with {\small $\alpha=0.025$, $\ul{D}=\{1,0.9,1\}$}.
			Bottom row: The fundamental solution $S^{\ul{D}}$ of the resolvent process with {\small $\ul{D}=\{1,0,0.08\}$}. The iso-contour values are indicated in the Figure.
		}\label{fig:EnhancementKernel}
	\end{figure}
	
	Furthermore, we investigate the distribution of the stochastic line propagation process with periodic boundaries at $-\pi-2k\pi$ to $\pi+2k\pi$ of the exact kernel. The probability density distribution of the kernel shows us that most of the random walks only move within $k=2$ loops, i.e. from $-3\pi$ to $3\pi$. See Figure~\ref{fig:K0K1K2}, where it can be seen
	that the series of rapidly decaying terms of method 2 for reasonable parameter settings already be truncated at $N=1$ or $N=2$.
	\begin{figure}[!htb]
		\centerline{
			\includegraphics[width=0.9\hsize]{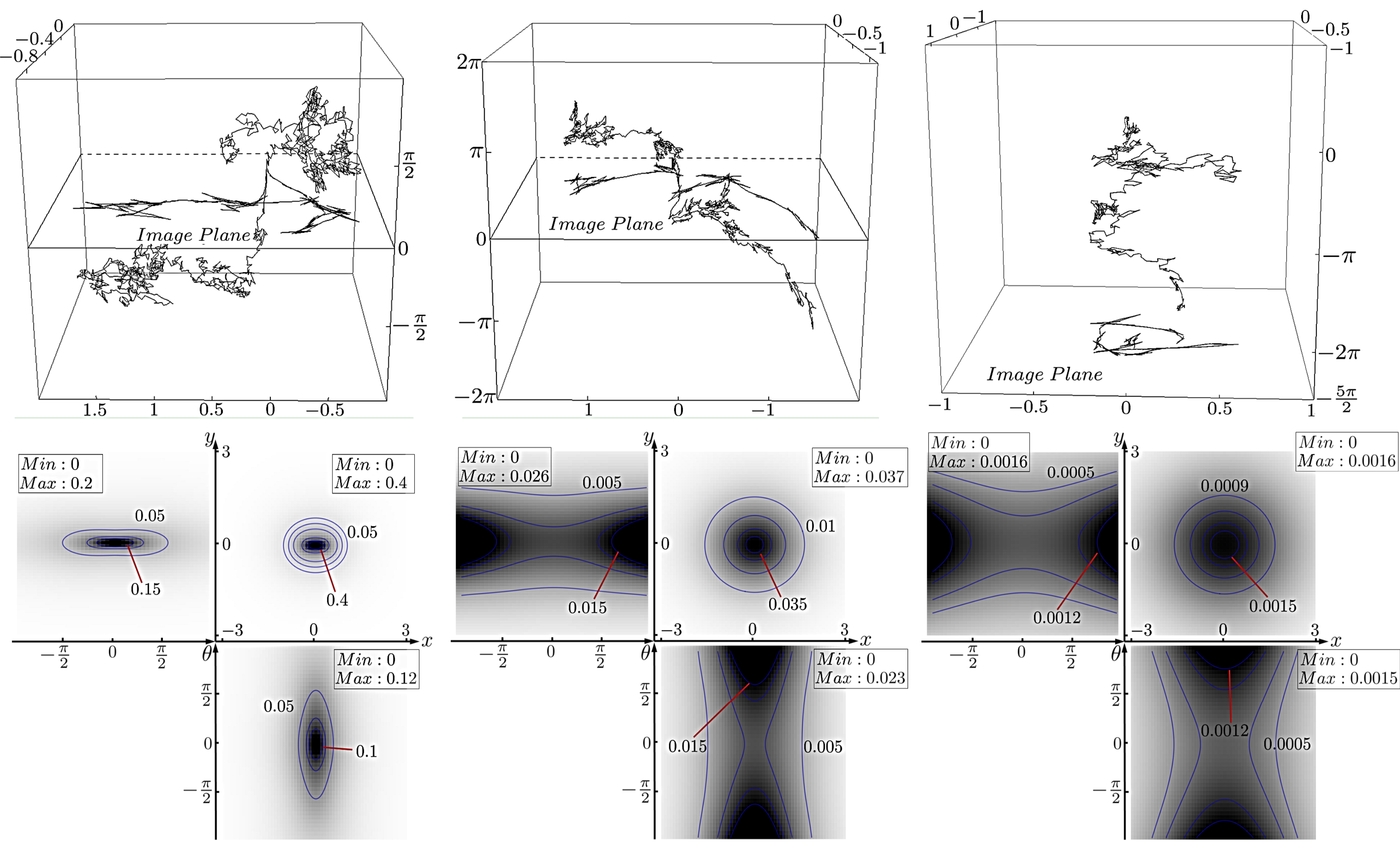}
		}
		\caption{Top row, left to right: Two random walks in $SE(2)=\R^2 \rtimes S^1$ (and their projection on $\R^2$) of the direction processes for $k=0, 1, 2$ cases (where $k$ denotes the amount of loops) of contour enhancement with $\mathbf{D}=\{0.5,0.,0.19\}$ (800 steps, step-size $\Delta t = 0.005$). Bottom row, left to right: the intensity projection of the exact enhancement kernels corresponding to the three cases in the top row, i.e. $\theta$ range from $-\pi$ to $\pi$ for $k=0$ case, from $-3\pi$ to $-\pi$ and $\pi$ to $3\pi$ for $k=1$ case, from  $-5\pi$ to $-3\pi$ and $3\pi$ to $5\pi$ for $k=2$ case, with {\small $\alpha=\frac{1}{40}$, $\mathbf{D}=\{0.5,0.,0.19\}$}.}
		\label{fig:K0K1K2}
	\end{figure}
	
	In Appendix~\ref{app:A} we analyze the asympotical behavior of the spatial Fourier transform of the kernels at the origin and at infinity. It turns out that the fundamental solutions (the case $\alpha \downarrow 0$) are the only kernels with a pole at the origin. This reflects that fundamental solutions are not $\mathbb{L}_{1}$-normalizable, in contrast to resolvent kernels and temporal kernels. Furthermore, the Fourier transform of any kernel restricted to a fixed $\theta$-layer has a rapidly decaying direction $\omega_{\eta}$ and a slowly decaying direction $\omega_{\xi}$. Therefore we analyze the decaying behavior of the spatially Fourier transformed kernels along these axes at infinity and we deduce that all resolvent kernels and fundamental solutions have a singularity at the origin, whereas the time-dependent kernels do not suffer from such a singularity.
	
	\subsection{Spectral Decomposition and the 3 General Forms of Exact Solutions}\label{3GeneralFormsExactSolutions}
	
	In this section, we will derive 3 general forms of the exact solutions. To this end we note that analysis of strongly continuous semigroups \cite{Yosida} and their resolvents start with analysis of the generator $Q^{\ul{D},\ul{a}}(\underline{\mathcall{A}})$. Symmetries of the solutions
	directly follow from the symmetries of the generator. Furthermore, spectral analysis of the generator $Q^{\ul{D},\ul{a}}(\underline{\mathcall{A}})$ as an unbounded operator on $\mathbb{L}_{2}(SE(2))$ provides spectral decomposition and explicit formulas for the time-dependent kernels, their resolvents and fundamental solutions as we will see next.
	
	First of all, the domain of the self-adjoint operator $Q^{\ul{D},\ul{a}}(\underline{\mathcall{A}})$ equals
	\[
	\begin{array}{l}
	\mathcall{D}(Q^{\ul{D},\ul{a}}(\underline{\mathcall{A}}))=\mathbb{H}_{2}(\R^{2}) \otimes \mathbb{H}_{2}(S^{1}), \textrm{ with second order Sobolev space} \\
	\mathbb{H}_{2}(S^{1})\equiv \{\phi \in \mathbb{H}_{2}([0,2\pi])\;|\; \phi(0)=\phi(2\pi) \textrm{ and } {\rm d}\phi(0)={\rm d}\phi(2\pi)\},
	\end{array}
	\]
	where ${\rm d}\phi \in \mathbb{H}_{1}(S^{1})$ is the weak derivative of $\phi$ and where both Sobolev spaces $\mathbb{H}_{2}(S^{1})$ are $\mathbb{H}_{2}(\R^{2})$ are endowed with the $\mathbb{L}_{2}$-norm. Operator $Q^{\ul{D},\ul{a}}(\underline{\mathcall{A}})$ is equivalent to the corresponding operator
	\[
	\mathcall{B}^{\ul{D},\ul{a}}:=(\mathcall{F}_{\R^{2}} \otimes \textrm{id}_{\mathbb{L}_{2}(S^{1})}) \circ Q^{\ul{D},\ul{a}}(\underline{\mathcall{A}}) \circ (\mathcall{F}_{\R^{2}}^{-1} \otimes \textrm{id}_{\mathbb{H}_{2}(S^{1})}),
	\]
	where $\otimes$ denotes the tensor product in distributional sense, $\mathcall{F}_{\R^{2}}$ denotes the unitary Fourier transform operator on $\mathbb{L}_{2}(\R^{2})$ almost everywhere given by
	\[
	\mathcall{F}_{\R^{2}}f(\www)=\hat{f}(\www):= \frac{1}{2\pi} \int_{\R^{2}} f(\ul{x}) e^{-i\, \www \cdot \ul{x}}\, {\rm d}\ul{x},
	\]
	and where $\textrm{id}_{\mathbb{H}_{2}(S^{1})}$ denotes the identity map on $\mathbb{H}_{2}(S^{1})$.
	This operator $\mathcall{B}^{\ul{D},\ul{a}}$ is given by
	\[
	(\mathcall{B}^{\ul{D},\ul{a}}\hat{U})(\www,\theta)= (\mathcall{B}^{\ul{D},\ul{a}}_{\www}\hat{U}(\www,\cdot))(\theta),
	\]
	where for each fixed spatial frequency $\www=(\rho \cos \varphi, \rho \sin \varphi) \in \R^{2}$ operator $\mathcall{B}^{\ul{D},\ul{a}}_{\www}: \mathbb{H}_{2}(S^{1}) \to \mathbb{L}_{2}(S^{1})$ is a mixture of multiplier operators and
	weak derivative operators $d=\partial_{\theta}$:
	\begin{equation}
	\mathcall{B}^{\ul{D},\ul{a}}_{\www}= -\sum \limits_{j=1}^{2} a_{j} m_j + \sum \limits_{k,j=1}^{2} D_{kj} m_k m_j
	+(-a_{3} + 2 D_{j3} m_j) d + D_{33} d^2,
	\end{equation}
	with multipliers $m_{1}=i \rho \cos (\varphi - \theta)$ and $m_{2}= -i \rho \sin(\varphi - \theta)$  corresponding to respectively $\partial_{\xi}=\cos \theta \partial_{x} +\sin \theta \partial_{y}$ and $\partial_{\eta}=-\sin \theta \partial_{x} +\cos \theta \partial_{y}$.
	By straightforward goniometric relations it follows that for each $\www \in \R^{2}$ operator
	$\mathcall{B}^{\ul{D},\ul{a}}_{\www}$ boils down to a 2nd order Mathieu-type operator (i.e. an operator of the type $\frac{d^2}{dz^2}-2q\cos(2z)+a $).
	In case of the contour enhancement we have
	\[\label{ContourEnhancementMathieuOperator}
	\begin{array}{l}
	\left(\ul{a}=\ul{0} \textrm{ and }\ul{D}=\textrm{diag}\{D_{11},D_{22},D_{33}\}\textrm{ and } D_{11},D_{22} \geq 0, D_{33}> 0 \right)  \Rightarrow \\
	\mathcall{B}^{\ul{D},\ul{a}}_{\www}= - D_{11} \rho^2 \cos^{2}(\varphi - \theta)  - D_{22}  \rho^{2}\sin^{2}(\varphi - \theta)+
	D_{33} \partial_{\theta}^2.
	\end{array}
	\]
	In case of the contour completion we have
	\[
	(\ul{a}=(1,0,0) \textrm{ and }D_{33}>0 ) \Rightarrow
	\mathcall{B}^{\ul{D},\ul{a}}_{\www}= - i\rho \cos(\varphi - \theta)  + D_{33} \partial_{\theta}^2.
	\]
	Operator $\mathcall{B}^{\ul{D},\ul{a}}_{\www}$ satisfies
	\[
	(\mathcall{B}^{\ul{D},\ul{a}}_{\www})^* \Theta = \overline{\mathcall{B}^{\ul{D},\ul{a}}_{\www}\overline{\Theta}},
	\]
	and moreover it admits a right-inverse kernel operator
	$K:\mathbb{L}_{2}(S^{1}) \to \mathbb{H}_{2}(S^{1})$ given by
	\begin{equation}\label{relconj}
	Kf(\theta) = \int \limits_{S^{1}} k(\theta,\nu) f(\nu) {\rm d}\nu,
	\end{equation}
	with a kernel satisfying $k(\theta,\nu)=k(\nu,\theta)$ (without conjugation). This kernel $k$
	relates to the fundamental solution of operator $\mathcall{B}^{\ul{D},\ul{a}}_{\www}$:
	\[
	\mathcall{B}^{\ul{D},\ul{a}}_{\www} \hat{S}^{\ul{D},\ul{a}}(\www,\cdot) =\delta^{\theta}_{0},
	\textrm{    for all }\www=(\rho \cos\varphi, \rho \sin \varphi) \in \R^{2},
	\]
	with $\hat{S}^{\ul{D},\ul{a}} :SE(2)\setminus \{e\} \to \R$, infinitely differentiable. By left-invariance of our generator $Q^{\ul{D},\ul{a}}(\underline{\mathcall{A}})$, we
	have
	\[
	k(\theta,\nu)= \hat{S}^{\ul{D},\ul{a}}(\rho \cos(\varphi-\theta),  \rho \sin (\varphi-\theta),\nu-\theta),
	\]
	where $\hat{S}^{\ul{D},\ul{a}}(\www,\theta)$ denotes the spatial Fourier transform of the fundamental solution $S^{\ul{D},\ul{a}}:SE(2) \setminus \{e\} \to \R^{+}$. Now that we have analyzed the generator of our PDE evolutions, we summarize 3 exact approaches describing the kernels of the PDE's of interest.
	
	\subsubsection*{Exact Approach 1}
	Kernel operator $K$ given by Eq.~(\ref{relconj}) is compact and its kernel satisfies $k(\theta,\nu) = k(\nu,\theta)$ and thereby it has a complete bi-orthonormal basis of eigenfunctions $\{\Theta_{n}\}_{n \in \mathbb{Z}}$:
	\[
	\mathcall{B}^{\ul{D},\ul{a}}_{\www} \Theta_{n}^{\www}= \lambda_{n} \Theta_{n}^{\www} \textrm{ and } K \Theta_{n}^{\www} =\lambda_{n}^{-1} \Theta_{n}^{\www}, \textrm{ with }0\geq \lambda_{n} \to \infty,
	\]
	As operator $\mathcall{B}^{\ul{D},\ul{a}}_{\www}$ is a Mathieu type of operator these eigenfunctions $\Theta_{n}$ can be expressed in periodic Mathieu functions, and the corresponding
	eigenvalues can be expressed in Mathieu characteristics as we will explicitly see in the subsequent subsections for both the contour-enhancement and contour-completion cases.
	The resulting solutions of our first approach are
	\begin{equation} \label{sols1}
	\boxed{
		\begin{array}{l}
		W(x,y,\theta,s)= [\mathcall{F}^{-1}_{\R^{2}}\hat{W}(\cdot,\theta,s)](x,y) \textrm{ with }
		\hat{W}(\www,\theta,s)= \sum \limits_{n \in \mathbb{Z}} e^{s \lambda_{n}} (\hat{U}(\www,\cdot),\overline{\Theta_{n}^{\www}})\Theta_{n}^{\www}(\theta), \\
		P_{\alpha}(x,y,\theta)= [\mathcall{F}^{-1}_{\R^{2}}\hat{P}_{\alpha}(\cdot,\theta)](x,y) \textrm{ with }
		\hat{P}_{\alpha}(\www,\theta)= \alpha \sum \limits_{n \in \mathbb{Z}} \frac{1}{\alpha -\lambda_n} (\hat{U}(\www,\cdot),\overline{\Theta_{n}^{\www}}) \Theta_{n}^{\www}(\theta), \\
		\hat{R}_{\alpha}^{\ul{D},\ul{a}}(\ul{\www},\theta)= \frac{\alpha}{2\pi} \sum \limits_{n \in \mathbb{Z}} \frac{1}{\alpha-\lambda_n} \overline{\Theta_{n}^{\www}(\theta)}\, \Theta_{n}^{\www}(0),\\
		S^{\ul{D},\ul{a}}(x,y,\theta)= [\mathcall{F}^{-1}_{\R^{2}}\hat{S}^{\ul{D},\ul{a}}(\cdot,\theta)](x,y) \textrm{ with }
		\hat{S}^{\ul{D},\ul{a}}(\www,\theta)=-\frac{1}{2\pi} \sum \limits_{n \in \mathbb{N}} \frac{1}{\lambda_n} \overline{\Theta_{n}^{\www}(\theta)}\, \Theta_{n}^{\www}(0).
		\end{array}
	}
	\end{equation}
	\begin{remark}
		If $\ul{a}=\ul{0}$ then $(\mathcall{B}^{\ul{D},\ul{a}}_{\www})^*=(\mathcall{B}^{\ul{D},\ul{a}}_{\www})$ and $\overline{\Theta_{n}^{\www}}=\Theta_{n}^{\www}$ and the $\{\Theta_{n}^{\www}\}$ form an orthonormal basis for $\mathbb{L}_{2}(S^{1})$ for each fixed $\www \in \R^{2}$.
	\end{remark}
	\subsubsection*{Exact Approach 2}
	The problem with the solutions (\ref{sols1}) is that the Fourier series representations (\ref{sols1}) do not converge quickly for $s>0$ small.
	Therefore, in the second approach we unfold the circle and for the moment we replace the $2\pi$-periodic boundary condition in $\theta$ by absorbing boundary conditions at infinity
	and we consider the auxiliary problem of finding $\hat{R}^{\ul{D},\ul{a},\infty}_{\alpha}: \R^{2} \times \R \setminus \{e\} \to \R^{+}$, such that
	\begin{equation} \label{unfoldeqs}
	\begin{array}{l}
	(-Q^{\ul{D},\ul{a}}+\alpha I) R^{\ul{D},\ul{a},\infty}_{\alpha} =\alpha \delta_{0}^{x} \otimes \delta_{0}^{y} \otimes \delta_{0}^{\theta}, \\
	R^{\ul{D},\ul{a},\infty}_{\alpha}(\cdot, \theta) \to 0 \textrm{ as }|\theta| \to \infty.
	\end{array}
	\desda \forall_{\www \in \R^{2}}\;:\:
	\left\{
	\begin{array}{l}
	(-\mathcall{B}^{\ul{D},\ul{a}}_{\www}+\alpha I) \hat{R}^{\ul{D},\ul{a},\infty}_{\alpha}(\www, \cdot) =\alpha \, \frac{1}{2\pi}\, \delta_{0}^{\theta}, \\
	\hat{R}^{\ul{D},\ul{a},\infty}_{\alpha}(\www, \theta) \to 0 \textrm{ as }|\theta| \to \infty.
	\end{array}
	\right.
	\end{equation}
	The spatial Fourier transform of the corresponding fundamental solution again follows by taking the limit $\alpha \downarrow 0$: $\hat{S}^{\infty}:=\lim \limits_{\alpha \downarrow 0}\alpha^{-1}\hat{R}^{\ul{D},\ul{a},\infty}_{\alpha}$. Now the solution of (\ref{unfoldeqs}) is given by
	\begin{equation}\label{genform}
	\boxed{
		\hat{R}^{\ul{D},\ul{a},\infty}_{\alpha}(\www, \theta)=
		\frac{\alpha}{ 2\pi D_{33}\, W_{\rho}}
		\left\{
		\begin{array}{l}
		G_{\rho}(\varphi)F_{\rho}(\varphi-\theta), \textrm{ for }\theta \geq 0, \\
		F_{\rho}(\varphi)G_{\rho}(\varphi-\theta), \textrm{ for }\theta \leq 0,
		\end{array}
		\right.
		\textrm{ for all }
		\www=(\rho \cos \varphi, \rho \sin\varphi)}
	\end{equation}
	where $\theta \mapsto F_{\rho}(\varphi-\theta)$ is the unique solution in the nullspace of operator $-\mathcall{B}^{\ul{D},\ul{a}}_{\www}+\alpha I$ satisfying $F_{\rho}(\theta) \to 0$ for $\theta \to +\infty$,
	and where $G_{\rho}$ is the unique solution in the nullspace of operator $-\mathcall{B}^{\ul{D},\ul{a}}_{\www}+\alpha I$ satisfying $G_{\rho}(\theta) \to 0$ for $\theta \to -\infty$, and
	The Wronskian of $F_{\rho}$ and $G_{\rho}$ is given by
	\begin{equation}\label{WronskianComputation}
	W_{\rho}=F_{\rho}G_{\rho}'-G_{\rho}F_{\rho}'=
	F_{\rho}(0)G_{\rho}'(0)-G_{\rho}
	(0)F_{\rho}'(0).
	\end{equation}
	See Figure~\ref{fig:ContinuousFit}.
	We conclude with the periodized solutions
	\begin{equation} \label{periodized}
	\boxed{
		\begin{array}{l}
		R_{\alpha}^{\ul{D},\ul{a}}(x,y,\theta)= [\mathcall{F}^{-1}_{\R^{2}}\hat{R}_{\alpha}^{\ul{D},\ul{a}}(\cdot,\theta)](x,y) \textrm{ with }
		\hat{R}_{\alpha}^{\ul{D},\ul{a}}(\www,\theta)
		= \sum \limits_{n \in \mathbb{Z}} \hat{R}_{\alpha}^{\ul{D},\ul{a},\infty}(\www,\theta+2n \pi) , \\
		S^{\ul{D},\ul{a}}(x,y,\theta)= [\mathcall{F}^{-1}_{\R^{2}}\hat{S}^{\ul{D},\ul{a}}(\cdot,\theta)](x,y) \textrm{ with }
		\hat{S}^{\ul{D},\ul{a}}(\www,\theta)=\sum \limits_{n \in \mathbb{Z}}
		\hat{S}^{\ul{D},\ul{a},\infty}(\www,\theta+2n \pi).
		\end{array}
	}
	\end{equation}
	\begin{figure}[!htb]
		\centering
		\includegraphics[width=0.9\hsize]{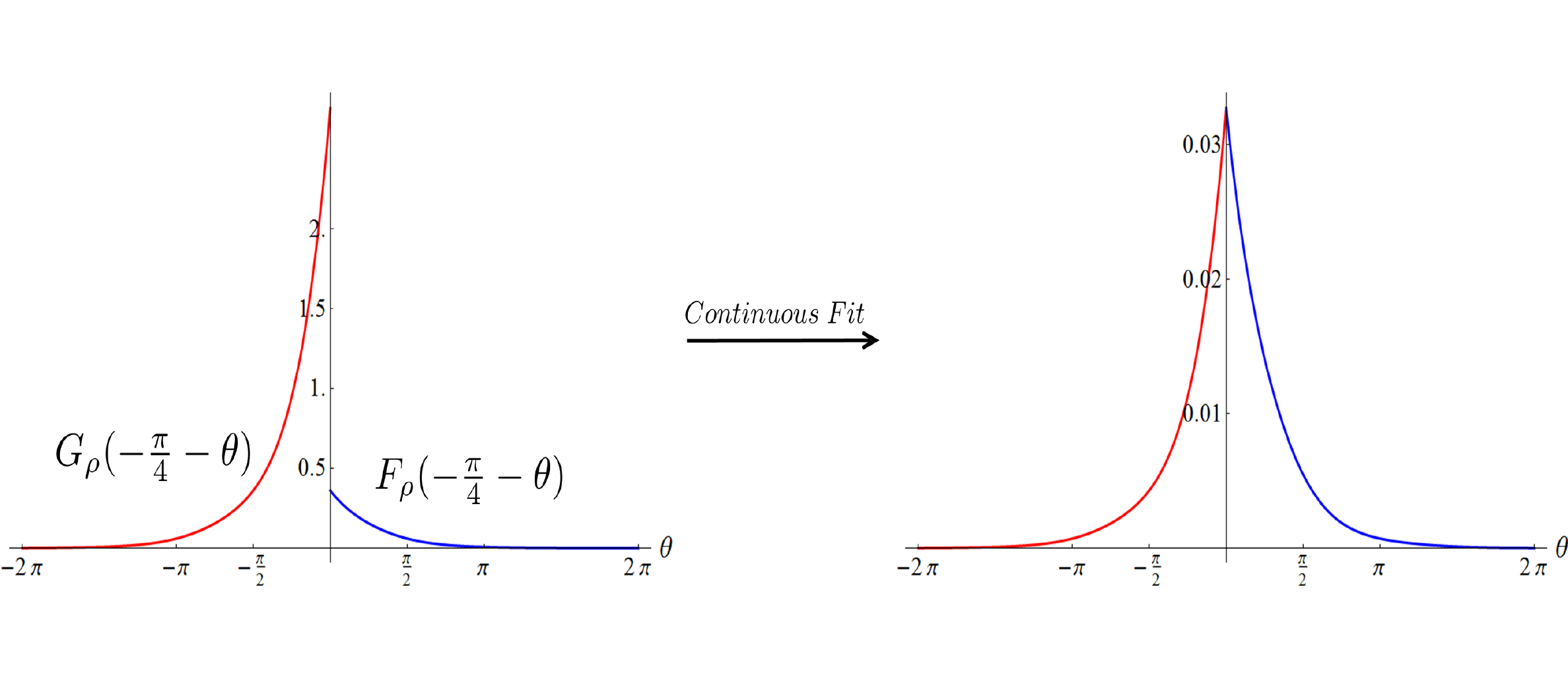}
		\caption{Illustration of the continuous fit of $\theta \mapsto \hat{R}_{\alpha}^{\ul{D},\ul{0},\infty}(\www,\theta)$ in Eq.~(\ref{genform}) for contour enhancement with parameter settings
			$D_{11}=1, D_{22}=0, D_{33}=0.05$ and $\alpha=\frac{1}{20}$, at $(\omega_x, \omega_y)=(\frac{\pi}{20},\frac{\pi}{20})$.}
		\label{fig:ContinuousFit}
	\end{figure}
	For further details see \cite{DuitsAlmsick2008,DuitsCASA2005,DuitsCASA2007,DuitsAMS1,MarkusThesis}. Here we omit the details on these explicit solutions for the general case as the proof is fully equivalent to \cite[Lemma 4.4\&Thm 4.5]{DuitsAlmsick2008}, and moreover the techniques are directly generalizable from standard Sturm-Liouville theory.
	
	\subsubsection*{Exact Approach 3}
	In the third approach, where for simplicity we restrict ourselves to both cases of the contour enhancement and the contour completion, we apply the well-known Floquet theorem to the second order ODE
	\begin{equation}\label{Mathieu}
	(-\mathcall{B}^{\ul{D},\ul{a}}_{\www}+\alpha I)F(\theta)=0 \desda
	F''(\theta) -2 q_{\rho} \cos((\varphi - \theta)\mu) F(\theta) = -a_{\rho}\, F(\theta),
	\end{equation}
	with $\mu \in \{1,2\}$. For the precise settings/formulas of $a_{\rho}$, $q_{\rho}$ and $\mu$, in the case of contour enhancement and contour completion we refer to the next subsections.
	Note that in both the case of contour enhancement and completion we have the Mathieu functions (following the conventions of \cite{AbraMathieu,Schaefke} ) with
	\begin{equation} \label{MatheiuEllipticFunctions}
	\boxed{
		\begin{array}{l}
		\textrm{me}_\nu(z;q_\rho)=\textrm{ce}_\nu(z;q_\rho)+i\textrm{se}_\nu(z;q_\rho)\\
		\textrm{me}_{-\nu}(z;q_\rho)=\textrm{ce}_\nu(z;q_\rho)-i\textrm{se}_\nu(z;q_\rho)
		\end{array},
	}
	\end{equation}
	where $z= \varphi-\theta, \nu=\nu(a_{\rho},q_{\rho})$, $\textrm{ce}_\nu(z;q_\rho)$ denotes the cosine-elliptic functions and $\textrm{se}_\nu(z;q_\rho)$ denotes the sine-elliptic functions, given by
	\begin{equation*} \label{CosineSineEllipticFunctions}
	\begin{array}{l}
	\textrm{ce}_\nu(z;q_\rho)=\sum \limits_{r=-\infty}^{\infty} \textrm{c}_{2r}^\nu(q_\rho)\cos{(\nu+2r)}z\; \textrm{with}\; \textrm{ce}_\nu(z;0)=\cos{\nu z}\\
	\textrm{se}_{\nu}(z;q_\rho)=\sum \limits_{r=-\infty}^{\infty} \textrm{c}_{2r}^\nu(q_\rho)\sin{(\nu+2r)}z\; \textrm{with}\; \textrm{se}_\nu(z;0)=\sin{\nu z}
	\end{array},
	\end{equation*}
	For details see \cite{Schaefke}.
	Then, we have \[
	F_{\rho}(z)=\textrm{me}_{-\nu}(z/\mu ,q_{\rho}),\;
	G_{\rho}(z)=\textrm{me}_{\nu}(z/\mu ,q_{\rho}),
	\]
	with $\mu=1$ in the contour enhancement case and $\mu=2$ in the contour completion case. Furthermore $a_{\rho}$ denotes the Mathieu characteristic and $q_{\rho}$
	denotes the Mathieu coefficient and $\nu=\nu(a_{\rho},q_{\rho})$ denotes the purely imaginary Floquet exponent (with $i \nu <0$)
	with respect to the Mathieu ODE-equation (\ref{Mathieu}), whose general form is
	\[
	y''(z)- 2q \cos(2z) y(z)= -a y(z).
	\]
	Application of this theorem to the solutions $F_{\rho}$ and $G_{\rho}$ in Eq.~(\ref{periodized}) yields
	\begin{equation} \label{geom}
	F_{\rho}\left(z -2n \pi\right)=e^{\frac{2n \pi \,i\, \nu}{\mu}} F_{\rho}\left(z\right) \textrm{ and }G_{\rho}\left(z -2n \pi\right)=e^{-\frac{2n \pi\, i \,\nu}{\mu}} F_{\rho}\left(z\right), \qquad z=\varphi-\theta.
	\end{equation}
	Substitution of (\ref{geom}) into (\ref{periodized}) together with the geometric series
	\[
	\sum \limits_{n=0}^{\infty} \left(e^{2  \nu \pi i/\mu} \right)^{n}=\frac{1}{1-e^{2 i\nu \pi/\mu}} \textrm{ and } \frac{1+e^{2i\nu \pi/\mu}}{1-
		e^{2i \nu \pi/\mu}}= -\textrm{coth}\,(i \nu \pi/\mu)= i\cot(\nu \pi/\mu),
	\]
	with Floquet exponent $\nu=\nu(a_{\rho},q_{\rho}), \  \textrm{Im}(\nu)>0$,
	yields
	the following closed form solution expressed in 4 Mathieu functions:
	{\small
		\begin{equation} \label{sols3}
		\boxed{
			\begin{array}{l}
			[\hat{R}_{\alpha}^{\ul{D},\ul{a}}(\cdot,\theta)](\www)= \frac{\alpha}{D_{33}\, i \, W_{\rho}}
			\left\{ \right. \\
			\left.\left(-\cot(\frac{\nu \pi}{\mu})\left(\textrm{ce}_{\nu}(\frac{\varphi}{\mu},q_{\rho})\, \textrm{ce}_{\nu}(\frac{\varphi-\theta}{\mu},  q_{\rho})+
			\textrm{se}_{\nu}(\frac{\varphi}{\mu},q_{\rho})\, \textrm{se}_{\nu}(\frac{\varphi-\theta}{\mu},q_{\rho})\right)+\right. \right.\\
			\left. \left. \qquad
			\textrm{ce}_{\nu}(\frac{\varphi}{\mu},q_{\rho})\, \textrm{se}_{\nu}(\frac{\varphi-\theta}{\mu},q_{\rho})-
			\textrm{se}_{\nu}(\frac{\varphi}{\mu},q_{\rho})\, \textrm{ce}_{\nu}(\frac{\varphi-\theta}{\mu},q_{\rho})\right){\rm u}(\theta)\;+ \qquad \right. \\
			\left.\left(-\cot(\frac{\nu \pi}{\mu})\left(\textrm{ce}_{\nu}(\frac{\varphi}{\mu},q_{\rho})\, \textrm{ce}_{\nu}(\frac{\varphi-\theta}{\mu},q_{\rho})-
			\textrm{se}_{\nu}(\frac{\varphi}{\mu},q_{\rho})\, \textrm{se}_{\nu}(\frac{\varphi-\theta}{\mu},q_{\rho}\right) + \right. \right.\\
			\left. \qquad
			\textrm{ce}_{\nu}(\frac{\varphi}{\mu},q_{\rho})\, \textrm{se}_{\nu}(\frac{\varphi-\theta}{\mu},q_{\rho})+
			\textrm{se}_{\nu}(\frac{\varphi}{\mu},q_{\rho})\, \textrm{ce}_{\nu}(\frac{\varphi-\theta}{\mu},q_{\rho}\right){\rm u}(-\theta)
			\end{array}
		}
		\end{equation}
	}
	with Floquet exponent $\nu=\nu(a_{\rho},q_{\rho})$ and where $\theta \mapsto {\rm u}(\theta)$ denotes the unit step function.
	
	Next we will summarize the main results, before we consider the special cases of the contour enhancement and the contour completion.
	\begin{theorem}\label{th:exact}
		The exact solutions of all linear left-invariant (convection)-diffusions on $SE(2)$, their resolvents, and their fundamental solutions given by
		\[
		W(g,t)=(K_{t}^{\ul{D},\ul{a}} *_{SE(2)} U)(g), \ \ P_{\alpha}(g)= (R_{\alpha}^{\ul{D},\ul{a}} *_{SE(2)} U)(g), \ \
		S^{\ul{D},\ul{a}}= (Q^{\ul{D},\ul{a}}(\underline{\mathcall{A}}))^{-1} \delta_{e},
		\]
		admit three types of exact representations for the solutions. The first type is a series expressed involving periodic Mathieu functions given by Eq.~(\ref{sols1}).
		The second type is a rapidly decaying series involving non-periodic Mathieu functions given by Eq.~(\ref{genform}) together with Eq.~(\ref{periodized}),
		and the third one involves only four non-periodic Mathieu functions and is given by Eq.~(\ref{sols3}).
	\end{theorem}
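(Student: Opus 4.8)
The plan is to transport the whole problem to the spatial Fourier domain. As set up above, the unitary map $\mathcall{F}_{\R^{2}} \otimes \textrm{id}_{\mathbb{L}_{2}(S^{1})}$ intertwines $Q^{\ul{D},\ul{a}}(\underline{\mathcall{A}})$ with the fibred operator whose fibre at $\www=(\rho\cos\varphi,\rho\sin\varphi)$ is the second-order Mathieu-type operator $\mathcall{B}^{\ul{D},\ul{a}}_{\www}$ on $\mathbb{H}_{2}(S^{1})$. Hence the time-dependent kernel $K_{t}^{\ul{D},\ul{a}}$, its Laplace transform $R_{\alpha}^{\ul{D},\ul{a}}$, and the fundamental solution $S^{\ul{D},\ul{a}}$ are obtained fibrewise: for each $\www$ one solves on $S^{1}$ the semigroup problem $e^{s\mathcall{B}^{\ul{D},\ul{a}}_{\www}}\tfrac{1}{2\pi}\delta_{0}^{\theta}$, the resolvent equation $\alpha(\alpha I-\mathcall{B}^{\ul{D},\ul{a}}_{\www})^{-1}\tfrac{1}{2\pi}\delta_{0}^{\theta}$, and $\mathcall{B}^{\ul{D},\ul{a}}_{\www}\hat S^{\ul{D},\ul{a}}(\www,\cdot)=\delta_{0}^{\theta}$, and then applies $\mathcall{F}_{\R^{2}}^{-1}$. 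The reduction of the general generator (any $\ul{D}\ge 0$, $\ul{a}\in\R^{3}$ with the H\"ormander condition) to the canonical Mathieu form $y''(z)-2q\cos(2z)\,y(z)=-a\,y(z)$ is the goniometric computation recorded just before the theorem, fixing $\mu\in\{1,2\}$ and $a_{\rho},q_{\rho}$. Everything below is then one-dimensional ODE theory on $S^{1}$ or on its universal cover $\R$.

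For the \emph{first representation} (Eq.~(\ref{sols1})): the right-inverse kernel operator $K$ of Eq.~(\ref{relconj}) has a square-integrable kernel, hence is Hilbert--Schmidt, so compact; since $k(\theta,\nu)=k(\nu,\theta)$ and $(\mathcall{B}^{\ul{D},\ul{a}}_{\www})^{*}\Theta=\overline{\mathcall{B}^{\ul{D},\ul{a}}_{\www}\overline{\Theta}}$, the eigenfunctions $\{\Theta_{n}^{\www}\}$ together with $\{\overline{\Theta_{n}^{\www}}\}$ form a bi-orthonormal system which — a known spectral property of Mathieu operators — is complete in $\mathbb{L}_{2}(S^{1})$. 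Expanding $\hat U(\www,\cdot)$ (resp.\ $\tfrac{1}{2\pi}\delta_{0}^{\theta}$) in this system and applying $e^{s\mathcall{B}^{\ul{D},\ul{a}}_{\www}}$, $(\alpha I-\mathcall{B}^{\ul{D},\ul{a}}_{\www})^{-1}$, or $(\mathcall{B}^{\ul{D},\ul{a}}_{\www})^{-1}$ term by term yields exactly Eq.~(\ref{sols1}); identifying the $\Theta_{n}^{\www}$ with periodic Mathieu functions and the $\lambda_{n}$ with Mathieu characteristics is then matching of the eigenvalue ODE with the canonical Mathieu equation.

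For the \emph{second and third representations}: I would lift the fibre problem to the covering line $\R$, replace $2\pi$-periodicity by decay at $\pm\infty$, and solve $(-\mathcall{B}^{\ul{D},\ul{a}}_{\www}+\alpha I)\hat R^{\ul{D},\ul{a},\infty}_{\alpha}(\www,\cdot)=\tfrac{\alpha}{2\pi}\delta_{0}^{\theta}$ by the classical Sturm--Liouville Green's-function recipe: take $F_{\rho}$ in the nullspace of $-\mathcall{B}^{\ul{D},\ul{a}}_{\www}+\alpha I$ with $F_{\rho}(\theta)\to 0$ as $\theta\to+\infty$ and $G_{\rho}$ with $G_{\rho}(\theta)\to 0$ as $\theta\to-\infty$, glue them continuously at $\theta=0$ with the derivative jump forced by the delta, and divide by the Wronskian $W_{\rho}$ of Eq.~(\ref{WronskianComputation}); existence/uniqueness of the decaying solutions and $W_{\rho}\neq 0$ use $\alpha>0$ together with the H\"ormander condition, which makes the fibre operator genuinely second order in $\theta$. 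This gives Eq.~(\ref{genform}); periodizing $\hat R_{\alpha}^{\ul{D},\ul{a}}(\www,\theta)=\sum_{n\in\mathbb{Z}}\hat R^{\ul{D},\ul{a},\infty}_{\alpha}(\www,\theta+2n\pi)$ recovers the solution on $S^{1}$ (Eq.~(\ref{periodized})), and the exponential decay of the non-periodic Mathieu functions makes this series converge rapidly. For the third form I would apply the Floquet theorem to the Mathieu ODE~(\ref{Mathieu}), writing $F_{\rho},G_{\rho}$ as $\textrm{me}_{\mp\nu}$ with purely imaginary Floquet exponent $\nu=\nu(a_{\rho},q_{\rho})$, so that Eq.~(\ref{geom}) holds; each periodization sum then collapses to a geometric series with ratio $e^{2i\nu\pi/\mu}$ summing to a $\cot(\nu\pi/\mu)$ factor, and expanding $\textrm{me}_{\pm\nu}=\textrm{ce}_{\nu}\pm i\,\textrm{se}_{\nu}$ and collecting the ${\rm u}(\theta)$ and ${\rm u}(-\theta)$ contributions yields the boxed form Eq.~(\ref{sols3}). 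In all three cases the fundamental-solution version follows from the limit $\alpha\downarrow 0$ of $\alpha^{-1}\hat R^{\,\cdot}_{\alpha}$.

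The hard part is not any single computation but the functional-analytic bookkeeping: showing $\mathcall{B}^{\ul{D},\ul{a}}_{\www}$ is self-adjoint when $\ul{a}=\ul{0}$ and, in general, has compact resolvent with a \emph{complete} bi-orthonormal eigensystem (a nontrivial feature of Mathieu operators with complex parameters); justifying the term-by-term action of $e^{s\,\cdot}$, of the resolvent and of the inverse, and the interchange of the fibrewise spectral sums with $\mathcall{F}_{\R^{2}}^{-1}$; and controlling convergence of the periodization series and of the $\alpha\downarrow 0$ limit defining $S^{\ul{D},\ul{a}}$ in a suitable topology. Since all of these steps run fully parallel to \cite[Lemma 4.4 \& Thm 4.5]{DuitsAlmsick2008} and to standard Sturm--Liouville theory, I would treat them by reference and devote the written proof to the reduction to the Mathieu operator and to the three explicit derivations above.
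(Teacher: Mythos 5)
Your proposal follows the same route as the paper: pass to the spatial Fourier domain to reduce the generator fibrewise to the Mathieu-type operator $\mathcall{B}^{\ul{D},\ul{a}}_{\www}$, obtain the first representation from the complete bi-orthonormal eigensystem of the compact right-inverse kernel operator, the second from the Sturm--Liouville Green's function on the universal cover followed by periodization, and the third from the Floquet theorem and the geometric series; like the paper, you delegate the remaining functional-analytic details to \cite[Lemma 4.4 \& Thm 4.5]{DuitsAlmsick2008} and standard Sturm--Liouville theory. This matches the paper's own treatment, which presents the theorem as a summary of exactly these three derivations.
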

	
	\subsubsection{The Contour Enhancement Case}
	
	In case $\ul{D}=\textrm{diag}\{D_{11},D_{22}, D_{33}\}$ with $D_{11},D_{33}>0$ and $D_{22}\geq 0$ and $\ul{a}=\ul{0}$,
	the settings in the solution formula of the first approach Eq.\!~(\ref{sols1}) are
	\begin{equation} \label{efenh}
	\begin{array}{l}
	\Theta_{n}(\theta)= \frac{\textrm{me}_{n}(\varphi-\theta,q_{\rho})}{\sqrt{2\pi}},\;
	q_{\rho}=\frac{\rho^2 (D_{11}-D_{22})}{4 D_{33}},\;
	\lambda_{n}=-a_{n}(q_{\rho}) D_{33} - \frac{\rho^{2}(D_{11}+D_{22})}{2},
	\end{array}
	\end{equation}
	where $\textrm{me}_{n}(z,q)$ denotes the periodic Mathieu function with parameter $q$ and $a_{n}(q)$ the corresponding Mathieu characteristic,
	and with Floquet exponent $\nu = n \in \mathbb{Z}$.
	
	The settings of the solution formula of the second approach
	Eq.\!~(\ref{Mathieu}) together with Eq.\!~(\ref{periodized}) are
	\begin{equation} \label{aqenh}
	\begin{array}{l}
	a_{\rho}=\frac{-\alpha -\frac{\rho^2}{2}(D_{11}+D_{22})}{D_{33}}, \
	q_{\rho}=\frac{\rho^2 (D_{11}-D_{22})}{4 D_{33}}, \
	\mu = 1, \ W_{\rho}=-2i \, \textrm{se}_{\nu}'(0,q_{\rho})\textrm{ce}_{\nu}(0,q_{\rho}),
	\end{array}
	\end{equation}
	where $\textrm{se}_\nu'{(0,q_\rho)}=\frac{d}{dz}\textrm{se}_\nu(z,q)|_{z=0}$.
	The third approach Eq.\!~(\ref{sols3}) yields for $D_{11}>D_{22}$ the result
	in \cite[Thm 5.3]{DuitsAMS1}.
	\begin{remark}
		As the generator $Q^{\ul{D},\ul{0}}(\underline{\mathcall{A}})=D_{11}\mathcall{A}_{1}^{2} + D_{33}\mathcall{A}_{3}^{2}$ is invariant under the reflection
		$\mathcall{A}_{3} \mapsto -\mathcall{A}_{3}$ we have that our real-valued kernels satisfy $K(x,y,\theta)=K(-x,-y,\theta)$. As a result the spatially Fourier transformed enhancement kernels given by
		$\hat{K}_{t}^{\ul{D}}(\www,\theta)$, $\hat{R}_{\alpha}^{\ul{D}}(\www,\theta)$, $\hat{S}^{\ul{D}}(\www,\theta)$ are real-valued.
		This is indeed the case in e.g. Eq.\!~(\ref{genform}), Eq.\!~(\ref{sols3}), as for $q,z \in \R$ and $\overline{\nu}=-\nu$, we have
		$\overline{\textrm{me}_{\nu}(z,q)}=
		\textrm{me}_{\overline{\nu}}(-\overline{z},\overline{q})=\textrm{me}_{\nu}(z,q)$,
		so that
		$\overline{\textrm{se}_{\nu}(z,q)} \in i\mathbb{R}$ and $\overline{\textrm{ce}_{\nu}(z,q)} \in \mathbb{R}$.
	\end{remark}
	
	\subsubsection{The Contour Completion Case}
	In case $\ul{D}=\textrm{diag}\{0,0, D_{33}\}$ with $D_{33}>0$ and  $\ul{a}=(1,0,0)$,
	the settings in the solution formula of the first approach Eq.\!~(\ref{sols1}) are
	\begin{equation} \label{efcom}
	\begin{array}{l}
	\Theta_{n}(\theta)= \frac{\textrm{ce}_{n}\left(\frac{\varphi-\theta}{2},q_{\rho}\right)}{\sqrt{\pi}} , n \in \mathbb{N}\cup \{0\}, \qquad
	\lambda_{n}=-\frac{a_{n}(q_{\rho})D_{11}}{4}, \ \ q_{\rho}=\frac{2\rho i}{D_{33}},
	\end{array}
	\end{equation}
	where $\textrm{ce}_{n}$ denotes the even periodic Mathieu-function
	with Floquet exponent $n$.
	
	The settings of the solution formula of the second approach
	Eq.\!~(\ref{Mathieu}) together with Eq.\!~(\ref{periodized}) are
	\begin{equation} \label{aqcom}
	\begin{array}{l}
	a_{\rho}= -\frac{4\alpha}{D_{33}}, \
	q_{\rho}= \frac{2\rho i}{D_{33}}, \
	\mu = 2, \ W_{\rho}= - i \, \textrm{se}_{\nu}'(0,q_{\rho})\textrm{ce}_{\nu}(0,q_{\rho}).
	\end{array}
	\end{equation}
	See Figure~\ref{fig:ExactCompletionKernel} for plots of completion kernels.
	\begin{figure}[!htb]
		\centerline{
			\includegraphics[width=0.9\hsize]{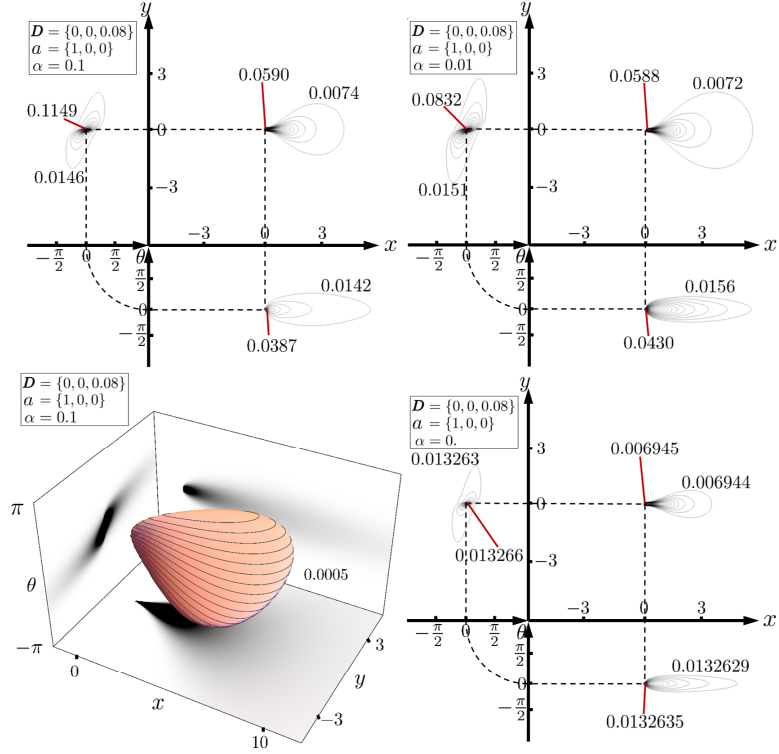}
		}
		\caption{The marginals of the exact Green's functions for contour completion. All the figures have the same settings: $\sigma=0.4$, $\ul{D}=\{0,0,0.08\}$ and $\ul{a}=(1,0,0)$. Top row, left: The resolvent process with {\small $\alpha=0.1$},
			right: The resolvent process with {\small $\alpha=0.01$}.
			Bottom row: The fundamental solution of the resolvent process with {\small $\alpha=0.$} The iso-contour values are indicated in the Figure.
		}\label{fig:ExactCompletionKernel}
	\end{figure}

	\subsubsection{Overview of the Relation of Exact Solutions to Numerical Implementation Schemes}
	
	Theorem~\ref{th:exact} provides three type of exact solutions for our PDE's of interest, and the question rises how these exact solutions
	relate to the common numerical approaches to these PDE's.
	
	The solutions of the first type relate to $SE(2)$-Fourier and finite element type (but then using a in Fourier basis) of techniques, as we will show for the general case in Section~\ref{section:Duitsmatrixalgorithm}. The general idea is that if the dimension of the square band matrices (where the bandsize is atmost $5$) tends to infinity, the exact solutions arise in the spectral decomposition of the numerical matrices.
	
	To compare the solutions of the second/third type of exact solutions to the numerics we must sample the solutions involving non-periodic Mathieu functions
	in the Fourier domain. Unfortunately, as also reported by Boscain et al. \cite{Boscain2}, well-tested and complete publically available packages for Mathieu-function evaluations
	are not easy to find. The routines for Mathieu function evaluation in \emph{Mathematica 7,8,9}, at least show proper results for specific parameter settings. However,
	in case of contour enhancement their evaluations numerically break down for the interesting cases $D_{11}=1$ and $D_{33}<0.2$, see Figure~\ref{fig:MathieuImplementationComparison} in Appendix \ref{app:B}. Therefore, in Appendix~\ref{app:B}, we provide
	our own algorithm for Mathieu-function evaluation relying on standard theory of continued fractions \cite{ContinuedFractions}.
	This allows us to sample the exact solutions in the Fourier domain for comparisons. Still there are two issues left that we address in the next section:
	1. One needs to analyze errors that arise by replacing $\textbf{CFT}^{-1}$ (Inverse of the Continuous Fourier
	Transform) by the $\textbf{DFT}^{-1}$ (Inverse of the Discrete Fourier Transform), 2. One needs to deal with singularities at the origin.
	\subsubsection{The Direct Relation of Fourier Based Techniques to the Exact Solutions}\label{section:FourierBasedForEnhancement}
	In \cite{DuitsAlmsick2008} we have related matrix-inversion in Eq.~(\ref{MatrixInverse}) to the exact solutions for the contour completion case. Next we follow a similar approach for the contour enhancement case with ($D_{22}=0$, i.e. hypo-elliptic diffusion), where again we relate diagonalization of the five-band matrix to the exact solutions.
	\begin{theorem}\label{th:RelationofFourierBasedWithExactSolution}
		Let $\pmb{\omega}=(\rho\cos\varphi, \rho\sin\varphi) \in \R^2$ be fixed. In case of contour enhancement with $\ul{D}=\textrm{diag}\{D_{11},0,D_{33}\}$ and $\ul{a}= \mathbf{0}$, the solution of the matrix system (\ref{5recursion}), for $N \rightarrow \infty$, can be written as\\
		{\small
			\begin{equation}
			\boxed{
				\hat{P}=S \Lambda^{-1} S^T \hat{\mathbf{u}}}
			\end{equation}
		}
		with
		\begin{equation}\label{recursionParameters}
		\begin{array}{l}
		\hat{P}=\{\tilde{P}^\ell(\rho)\}_{\ell \in \mathbb{Z}},\quad \hat{\mathbf{u}}=\{\tilde{u}^\ell(\rho)\}_{\ell \in \mathbb{Z}}, \quad S=[S_n^\ell]=[c_\ell^n(q_\rho)],\\
		\Lambda=\textrm{diag}\{\alpha-\lambda_n({\rho})\},\quad \lambda_n(\rho)=-a_{2n}(q_\rho)D_{33}-\frac{\rho^2 D_{11}}{2}, \quad q_\rho=\frac{\rho^2D_{11}}{4 D_{33}},\\
		\end{array}
		\end{equation}
		and where
		\begin{align*}
		c_\ell^n=
		\left\{
		\begin{array}{l}
		\textit{Mathieu Coefficient }\; c_\ell^n, \quad if \; \ell \; is \; even\\
		0, \quad if \;\ell\; is\; odd.
		\end{array}
		\right.
		\end{align*}
		
		In fact Eq.~(\ref{5recursion}), for $N \rightarrow \infty,$ boils down to a steerable $SE(2)$ convolution\cite{FrankenPhDThesis} with the corresponding exact kernel $R_\alpha^{\ul{D},\ul{a}}:SE(2)\rightarrow \R^+$
	\end{theorem}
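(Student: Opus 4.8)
The plan is to exploit the fact that, for fixed $\pmb{\omega}=(\rho\cos\varphi,\rho\sin\varphi)$, the matrix appearing in Eq.~(\ref{5recursion}) (equivalently the $D_{22}=0$, $\ul{a}=\ul{0}$ specialization of Eq.~(\ref{recurs})) is exactly the three-term recurrence satisfied by the Fourier coefficients $c_{2r}^{n}(q_{\rho})$ of the periodic Mathieu functions $\textrm{me}_{n}(\varphi-\theta,q_{\rho})$, with Mathieu parameter $q_{\rho}=\frac{\rho^{2}D_{11}}{4D_{33}}$. First I would write out the infinite ($N\to\infty$) band operator: in the contour-enhancement case $\ul{a}=\ul{0}$ kills the $q$ and $t$ terms, $D_{22}=0$ leaves only the $r$-coupling two steps away, and the diagonal becomes $p_{\ell}=(2\ell)^{2}+\frac{4\alpha+2\rho^{2}D_{11}}{D_{33}}$. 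I would then observe that the odd-indexed coefficients decouple from the even-indexed ones and, on the delta-spike initial condition relevant to the kernel, are identically zero — this is exactly the $c_\ell^n = 0$ for $\ell$ odd clause in the statement — so we may restrict to the even sublattice and reindex $\ell = 2r$.

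Next I would recall the diagonalization already established in Exact Approach~1: the operator $\mathcall{B}^{\ul{D},\ul{0}}_{\www}$ has eigenfunctions $\Theta_{n}^{\www}(\theta)=\frac{1}{\sqrt{2\pi}}\textrm{me}_{n}(\varphi-\theta,q_{\rho})$ with eigenvalues $\lambda_{n}(\rho)=-a_{2n}(q_{\rho})D_{33}-\frac{\rho^{2}D_{11}}{2}$ as in Eq.~(\ref{efenh}), and that (since $\ul a=\ul 0$, cf. the remark after Eq.~(\ref{sols1})) these form an \emph{orthonormal} basis of $\mathbb{L}_{2}(S^{1})$. Passing to the spatial Fourier / angular Fourier side via the ansatz (\ref{ansatz}), the change of basis from the angular-frequency representation $\{\tilde P^{\ell}(\rho)\}_{\ell}$ to the Mathieu-eigenfunction representation is implemented precisely by the matrix $S=[c_{\ell}^{n}(q_{\rho})]$ of Mathieu coefficients; orthonormality of $\{\textrm{me}_{n}\}$ and the completeness relations for the $c_{\ell}^{n}$ give $S^{T}S = SS^{T}=I$, so $S$ is the (real, orthogonal) matrix that diagonalizes the band operator. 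Then $(\alpha I - \mathcall{B}^{\ul D,\ul 0}_{\www})$ becomes $S\,\Lambda\,S^{T}$ with $\Lambda=\textrm{diag}\{\alpha-\lambda_{n}(\rho)\}$, and inverting and multiplying by $\alpha$ (the normalization already sitting in front of $\hat U$ in Eq.~(\ref{theeqn}), which after clearing $D_{11}$ produces the $\frac{4\alpha}{D_{11}}$ prefactor of Eq.~(\ref{MatrixInverse})) yields $\hat P = S\Lambda^{-1}S^{T}\hat{\mathbf u}$, which is the boxed formula.

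Finally, to get the last assertion — that Eq.~(\ref{5recursion}) in the limit is an $SE(2)$-convolution with the exact kernel $R_{\alpha}^{\ul{D},\ul{a}}$ — I would expand $S\Lambda^{-1}S^{T}$ entrywise, $\hat P^{\ell}(\www)=\sum_{n}\frac{c_{\ell}^{n}c_{m}^{n}}{\alpha-\lambda_{n}}\hat u^{m}(\www)$, recognize the kernel of this operator as $\frac{\alpha}{2\pi}\sum_{n}\frac{1}{\alpha-\lambda_{n}}\overline{\Theta_{n}^{\www}(\theta)}\Theta_{n}^{\www}(\theta')$, i.e. exactly $\hat R_{\alpha}^{\ul D,\ul a}(\www,\theta-\theta')$ from the third line of Eq.~(\ref{sols1}), and then undo the spatial Fourier transform to land on the $SE(2)$-convolution (\ref{SE(2)ConvolutionOnDiffusion})–(\ref{Resolvent}); the steerability/left-invariance structure is inherited from the $\varphi-\theta$ dependence of $\Theta_n^{\www}$. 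The main obstacle I anticipate is the $N\to\infty$ limit itself: one must justify that the finite $(2N+1)\times(2N+1)$ truncations converge (in an appropriate operator-norm or strong sense on $\ell_{2}(\mathbb{Z})$, equivalently $\mathbb{H}_{2}(S^{1})$) to the genuine resolvent $(\alpha I-\mathcall{B}^{\ul D,\ul 0}_{\www})^{-1}$, using that $\lambda_{n}\to-\infty$ so the operator has compact resolvent and the tail eigen-contributions are uniformly controlled — this is where the convergence rate claim underpinning the numerical comparison in Section~\ref{section:Experimental results} really comes from, and it is the step needing the most care; the rest is the (well-documented) identification of the band recurrence with the Mathieu recurrence and bookkeeping of normalization constants.
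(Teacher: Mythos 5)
Your proposal is correct and follows essentially the same route as the paper's own proof: you identify $S=[c_\ell^n(q_\rho)]$ as the unitary (real, hence orthogonal) change of basis between the angular Fourier basis $\{e^{i\ell(\varphi-\theta)}\}$ and the periodic Mathieu eigenbasis $\{\Theta_n^{\www}\}$ of $\mathcall{B}^{\ul{D},\ul{0}}_{\www}$ from Exact Approach~1, conclude $\hat{P}=S\Lambda^{-1}S^{T}\hat{\mathbf{u}}$, and recover the exact kernel $\frac{\alpha}{2\pi}\sum_n\frac{\Theta_n^{\www}(\theta)\Theta_n^{\www}(0)}{\alpha-\lambda_n(\rho)}$ by expanding entrywise on the spike initial condition $\hat{u}^{p}=\frac{1}{2\pi}$. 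Your extra attention to the even/odd decoupling of the band recurrence and to justifying the $N\to\infty$ truncation limit goes slightly beyond what the paper writes down, but does not change the argument.
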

	\begin{proof}
		Both $\{\frac{1}{\sqrt{2\pi}}e^{i\ell(\varphi-\theta)}| \ell \in \mathbb{Z}\}$
		and $\{\frac{1}{\sqrt{2\pi}}\Theta_n^{\pmb{\omega}}(\theta):=\frac{me_n(\varphi-\theta,q_\rho)}{\sqrt{2\pi}}|n \in \mathbb{Z}\}$
		form an orthonormal basis of $\mathbb{L}_2({S^1})$. The corresponding basis transformation is given by $S$. As this basis transformation is unitary, we have $S^{-1}=S^\dagger=\bar{S}^T$. As a result we have
		\begin{equation}
		\begin{aligned}
		\tilde{P}^\ell(\rho) = \sum_{m,n,p \in \mathbb{Z}} S_m^\ell \delta_n^m \frac{1}{\alpha-\lambda_n(\rho)} (S^\dagger)_p^n\tilde{u}^p(\rho) =\sum_{n \in \mathbb{Z}}\sum_{p \in \mathbb{Z}} \frac{c_\ell^n(q_\rho)c_p^n(q_\rho)\tilde{u}^p(\rho)}{\alpha-\lambda_n(\rho)}.
		\end{aligned}
		\end{equation}
		Thereby, as $me_n(z)=\sum_{\ell \in \mathbb{Z}}c_\ell^n(q_\rho)e^{i \ell z}$, we have:
		\begin{equation}
		\begin{aligned}
		\hat{P}_\alpha(\pmb{\omega},\theta)
		=\alpha \sum_{\ell \in \mathbb{Z}} \tilde{P}^\ell(\rho)e^{i \ell (\varphi - \theta)}
		=\alpha \sum_{n \in \mathbb{Z}}\sum_{p \in \mathbb{Z}}\frac{me_n(\varphi - \theta, q_\rho)c_p^n(q_\rho)e^{ip\varphi} \hat{u}^p(\rho)}{\alpha-\lambda_n(\rho)},
		\end{aligned}
		\end{equation}
		where we recall $\tilde{u}^p=e^{ip\varphi}\hat{u}^p$.
		Now by setting $u=\delta_e \Leftrightarrow \hat{u}(\pmb{\omega},\theta)=\frac{1}{2\pi}\delta_0^\theta \Leftrightarrow \forall_{p \in \mathbb{Z}},\; \hat{u}^p=\frac{1}{2\pi}$.\\
		We obtain the exact kernel \\
		\begin{equation}
		R_\alpha^{\ul{D},\ul{a}}(\pmb{\omega},\theta)=\frac{\alpha}{2\pi}\sum_{n \in \mathbb{Z}}\frac{\Theta_n^{\pmb{\omega}}(\theta)\Theta_n^{\pmb{\omega}}(0)}{\alpha-\lambda_n(\rho)}.
		\end{equation}
		From which the result follows. $\hfill \Box$
	\end{proof}
	\textbf{Conclusion:} This theorem supports our numerical findings that will follow in Section \ref{section:Experimental results}. The small relative error are due to rapid convergence $\frac{1}{(\alpha-\lambda_n(\rho))} \rightarrow 0\quad (n\rightarrow \infty)$, so that truncation of the 5-band matrix produces very small $\textit{uniform}$ errors compared to the exact solutions. It is therefore not surprising that the Fourier based techniques outperform the finite difference solutions in terms of numerical approximation (see experiments Section~\ref{section:Experimental results}).
	
	\subsection{Comparison to The Exact Solutions in the Fourier Domain\label{ch:comparison}}
	In the previous section we have derived the Green's function of the exact solutions of the system
	\begin{align} \label{ResolventEquations2}
	\left\{
	\begin{aligned}
	&(\alpha I-Q^{\ul{D},\ul{a}}) R_{\alpha}^{\ul{D},\ul{a}}=\alpha \delta_e\\
	&R_{\alpha}^{\mathbf{D},\mathbf{a}}(x,y,-\pi)
	= 
	R_{\alpha}^{\mathbf{D},\mathbf{a}}(x,y,\pi)
	\end{aligned}
	\right. \end{align}
	in the continuous Fourier domain. However, we still need to produce nearly exact solutions $R_{\alpha}^{\ul{D},\ul{a}}(x,y,\theta_r)$ in the spatial domain, given by
	\begin{equation}\label{ContinuousExactSolutions}
	\begin{aligned}
	R_{\alpha}^{\ul{D},\ul{a}}(x,y,\theta_r)&=\left(\frac{1}{2\pi}\right)^2\int_{-\infty}^{\infty}\int_{-\infty}^{\infty}
	\hat{R}_{\alpha}^{\ul{D},\ul{a}}(\pmb{\omega},\theta_r)e^{i\pmb{\omega}\pmb{x}}d\pmb{\omega}\\
	&=\left(\frac{1}{2\pi}\right)^2\int_{-\varsigma\pi}^{\varsigma\pi}\int_{-\varsigma\pi}^{\varsigma\pi}\hat{R}_{\alpha}^{\ul{D},\ul{a}}(\pmb{\omega},\theta_r)e^{i\pmb{\omega}\pmb{x}}d\pmb{\omega}+I_\varsigma(\ul{x},r),
	\end{aligned}
	\end{equation}
	where $\pmb{x}=(x,y) \in \R^2$, $\pmb{\omega}=(\omega_x,\omega_y)\in \R^2$, $\theta_r=(\frac{2\pi}{2R+1} \cdot r) \in [-\pi,\pi]$ are the discrete angles and $r \in \{-R,-(R-1),...,0,...,R-1,R\}$, $\varsigma$ is an oversampling factor and $I_\varsigma(\ul{x},r)$ represent the tails of the exact solutions due to their support outside the range $[-\varsigma\pi,\varsigma\pi]$ in the Fourier domain, given by
	\begin{equation}
	I_{\zeta}(\ul{x},r)=\left(\frac{1}{2\pi}\right)^2 \int_{\R^2 \setminus [-\varsigma\pi,\varsigma\pi]^2}e^{-s|\pmb{\omega}|^2}\hat{R}_{\alpha}^{\ul{D}}(\pmb{\omega},\theta_r)e^{i\pmb{\omega}\ul{x}}d\pmb{\omega}.
	\end{equation}
	However in practice we sample the exact solutions in the Fourier domain and then obtain the spatial kernel by directly applying the $\textbf{DFT}^{-1}$. Here errors will emerge by using the $\textbf{DFT}^{-1}$ instead of the $\textbf{CFT}^{-1}$. More precisely, we shall rely on the $\textbf{CDFT}^{-1}$ (Inverse of the Centered Discrete Fourier Transform). Next we analyze and estimate the errors via Riemann sum approximations \cite{RiemannSum}. The nearly exact solutions of the spatial kernel in Eq.~(\ref{ContinuousExactSolutions}) can be written as
	\begin{equation}\label{DiscreteExactSolutions}
	\begin{array}{l}
	R^{\ul{D},\ul{a}}_\alpha(x,y,\theta_r)=\left(\frac{1}{2\pi}\right)^2 \sum\limits_{p'=-\varsigma P}^{\varsigma P}\sum\limits_{q'=-\varsigma Q}^{\varsigma Q}\hat{R}^{\ul{D},\ul{a}}_\alpha (\omega_{p'}^1,\omega_{q'}^2,\theta_r)e^{i(\omega_{p'}^1 x+\omega_{q'}^2 y)}\Delta\omega^1\Delta\omega^2+\\
	\qquad\qquad\qquad \left.
	I_\varsigma(\ul{x},r)+O\left(\frac{1}{2P+1}\right)+O\left(\frac{1}{2Q+1}\right)\right.\\
	\qquad  \qquad \left.
	=\frac{1}{2P+1}\frac{1}{2Q+1} \sum\limits_{p'=-\varsigma P}^{\varsigma P}\sum\limits_{q'=-\varsigma Q}^{\varsigma Q}\hat{R}^{\ul{D},\ul{a}}_\alpha (\omega_{p'}^1,\omega_{q'}^2,\theta_r)e^{i(\omega_{p'}^1 x+\omega_{q'}^2 y)}+\right.\\
	\qquad\qquad\qquad \left.
	I_\varsigma(\ul{x},r)+O\left(\frac{1}{2P+1}\right)+O\left(\frac{1}{2Q+1}\right)\right.
	\end{array}
	\end{equation}
	where
	$\Delta\omega^1=\frac{2\pi}{2P+1}=\frac{2\pi}{x_{dim}},
	\Delta\omega^2=\frac{2\pi}{2Q+1}=\frac{2\pi}{y_{dim}}$ and $P, \, Q \in \mathbb{N}$ determine the number of samples in the spatial domain, with discrete frequencies and angles given by
	\begin{equation}\label{discretefrequencies}
	\omega_{p'}^1=\frac{2\pi}{2P+1} \cdot p' \in [-\varsigma\pi,\varsigma\pi],\;
	\omega_{q'}^2=\frac{2\pi}{2Q+1} \cdot q' \in [-\varsigma\pi,\varsigma\pi],\;
	\theta_r=\frac{2\pi}{2R+1} \cdot r \in [-\pi,\pi]
	\end{equation}
	There are three approximation terms in Eq.~(\ref{DiscreteExactSolutions}), and two of them, i.e. $O\left(\frac{1}{2P+1}\right)$ and $O\left(\frac{1}{2Q+1}\right)$ are standard due to Riemann sum approximation. However, $I_\varsigma(\ul{x},r)$ is harder to control and estimate.
	This is one of the reasons why we include a spatial Gaussian blurring with small scale $0<s \ll 1$. This means that instead of solving
	$R_\alpha^{\ul{D},\ul{a}}=
	\alpha(\alpha I-Q^{\ul{D},\ul{a}}(\mathcall{A}_1,\mathcall{A}_2,\mathcall{A}_3))^{-1}\delta_e$,
	we compute
	\begin{equation}\label{ResolventWithGaussian}
	R_\alpha^{\ul{D},\ul{a},s}=e^{s\Delta}\alpha(\alpha I-Q^{\ul{D},\ul{a}}(\mathcall{A}_1,\mathcall{A}_2,\mathcall{A}_3))^{-1}\delta_e
	=\alpha(\alpha I - Q^{\ul{D},\ul{a}}(\mathcall{A}_1,\mathcall{A}_2,\mathcall{A}_3))^{-1} e^{s\Delta} \delta_e.
	\end{equation}
	So instead of computing the impulse response of a resolvent diffusion we compute the response of a spatially blurred spike $G_s \otimes \delta_0^\theta$ with Gaussian kernel $G_s(x)=\frac{e^{-\frac{||x||^2}{4s}}}{4\pi s}$. Another reason for including a linear isotropic diffusion is that the kernels $R_\alpha^{\ul{D},\ul{a},s}$ are not singular at the origin. The singularity at the origin $(0,0,0)$ of $R_\alpha^{\ul{D},\ul{a}}$ reproduces the original data, whereas the tails of $R_\alpha^{\ul{D},\ul{a}}$ take care of the external actual visual enhancement. Therefore, reducing the singularity at the origin by slight increase of $s>0$, amplifies the enhancement properties of the kernel in practice.
	However, $s>0$ should not be too large as we do not want the isotropic diffusion to dominate the anisotropic diffusion.
	\begin{theorem}
		The exact solutions of $R_\alpha^{\ul{D},\ul{a},s}:SE(2) \rightarrow \R^+$ are given by
		\bqn \label{ExactSolutionsFourier}
		\left(\mathcall{F}_{\R^2}R_\alpha^{\ul{D},\ul{a},s}(\cdot,\theta)\right)(\pmb{\omega})
		=
		\left(\mathcall{F}_{\R^2}R_\alpha^{\ul{D},\ul{a}}(\cdot,\theta)\right)(\pmb{\omega})e^{-s|\pmb{\omega}|^2},
		\eqn
		where analytic expressions for $\hat{R}_\alpha^{\ul{D},\ul{a}}(\pmb{\omega,\theta})=\left[\mathcall{F}_{\R^2}(R_\alpha^{\ul{D},\ul{a}}(\cdot,\theta))\right](\pmb{\omega})$ in terms of Mathieu functions are provided in Theorem \ref{th:exact}. For the spatial distribution, we have the following error estimation:
		\begin{equation}\label{ExactSolutionRiemannSumApproximation}
		R_\alpha^{\ul{D},\ul{a},s}(\ul{x},\theta_r)=\left(\left[\mathbf{CDFT}\right]^{-1}(\hat{R}_{\alpha}^{\mathbf{D},\mathbf{a},s}(\pmb{\omega}_\cdot^1,\pmb{\omega}_\cdot^2,\theta_r))\right)(\ul{x})+I_\varsigma^s(\ul{x},r)+O\left(\frac{1}{2P+1}\right)+O\left(\frac{1}{2Q+1}\right),
		\end{equation}
		for all $\ul{x}=(x,y) \in \mathbb{Z}_P \times \mathbb{Z}_Q$, with discretization in Eq.~(\ref{discretefrequencies}), $\varsigma \in \mathbb{N}$ denotes the oversampling factor in the Fourier domain and $s=\frac{1}{2}\sigma^2$ is the spatial Gaussian blurring scale with $\sigma \approx 1, 2$ pixel length, and \\
		\begin{equation}
		I_\varsigma^s(\ul{x},r)=\int_{\R^2 \setminus [-\varsigma\pi,\varsigma\pi]^2}e^{-s|\pmb{\omega}|^2}\hat{R}_{\alpha}^{\ul{D},\ul{a}}(\pmb{\omega},\theta_r)e^{i\pmb{\omega}\cdot\ul{x}}d\pmb{\omega}.
		\end{equation}
		\label{thm:DiscreteExactSolutionsErrorEstimation}
	\end{theorem}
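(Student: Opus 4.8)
The plan is to prove the two displayed identities in turn. Identity~(\ref{ExactSolutionsFourier}) is purely operator-algebraic, and~(\ref{ExactSolutionRiemannSumApproximation}) is then a Riemann-sum bookkeeping argument built on it.

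\emph{Step 1 (the Fourier factorization).} The substance of~(\ref{ExactSolutionsFourier}) is the second equality in~(\ref{ResolventWithGaussian}), i.e. that $e^{s\Delta}$ commutes with the resolvent $(\alpha I-Q^{\ul{D},\ul{a}}(\underline{\mathcall{A}}))^{-1}$. Since the isotropic spatial Laplacian satisfies $\Delta=\partial_x^2+\partial_y^2=\mathcall{A}_1^2+\mathcall{A}_2^2$ on $SE(2)$ (expand $\partial_\xi^2+\partial_\eta^2$ and use $\cos^2\theta+\sin^2\theta=1$), it is enough to show $\Delta$ commutes with $Q^{\ul{D},\ul{a}}(\underline{\mathcall{A}})$. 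Using the structure relations $[\mathcall{A}_3,\mathcall{A}_1]=\mathcall{A}_2$, $[\mathcall{A}_3,\mathcall{A}_2]=-\mathcall{A}_1$, $[\mathcall{A}_1,\mathcall{A}_2]=0$ one computes in one line $[\mathcall{A}_3,\mathcall{A}_1^2+\mathcall{A}_2^2]=\mathcall{A}_2\mathcall{A}_1+\mathcall{A}_1\mathcall{A}_2-\mathcall{A}_1\mathcall{A}_2-\mathcall{A}_2\mathcall{A}_1=0$, while $\mathcall{A}_1,\mathcall{A}_2$ trivially commute with $\mathcall{A}_1^2+\mathcall{A}_2^2$; hence $\Delta$ commutes with every monomial in $\mathcall{A}_1,\mathcall{A}_2,\mathcall{A}_3$, in particular with $Q^{\ul{D},\ul{a}}(\underline{\mathcall{A}})$ and with its resolvent. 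Consequently $R_\alpha^{\ul{D},\ul{a},s}=\alpha(\alpha I-Q^{\ul{D},\ul{a}}(\underline{\mathcall{A}}))^{-1}(G_s\otimes\delta_0^\theta)$, and applying the unitary spatial Fourier transform $\mathcall{F}_{\R^2}$ — which intertwines $e^{s\Delta}$ with multiplication by $e^{-s|\pmb{\omega}|^2}$ and acts as the identity in $\theta$ — gives~(\ref{ExactSolutionsFourier}); the closed-form Mathieu-function expression for $\hat R_\alpha^{\ul{D},\ul{a}}$ is quoted from Theorem~\ref{th:exact}. I would also record here that the factor $e^{-s|\pmb{\omega}|^2}$ puts $\pmb{\omega}\mapsto\hat R_\alpha^{\ul{D},\ul{a},s}(\pmb{\omega},\theta)$ in $\mathbb{L}_1(\R^2)\cap C^\infty(\R^2)$, so the inverse Fourier representation in~(\ref{ContinuousExactSolutions}) holds pointwise and defines a continuous, non-singular kernel — the second motivation, noted just before the theorem, for the regularization.

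\emph{Step 2 (the error estimate).} I would proceed in three sub-steps. (i) Split the inverse-CFT integral $\left(\frac{1}{2\pi}\right)^{2}\int_{\R^2}\hat R_\alpha^{\ul{D},\ul{a},s}(\pmb{\omega},\theta_r)e^{i\pmb{\omega}\cdot\ul{x}}\,{\rm d}\pmb{\omega}$ into the part over the truncation box $[-\varsigma\pi,\varsigma\pi]^2$ and the part over its complement; by~(\ref{ExactSolutionsFourier}) the latter is exactly $I_\varsigma^s(\ul{x},r)$, which is kept as an explicit remainder. (ii) On the compact box the integrand $\pmb{\omega}\mapsto\hat R_\alpha^{\ul{D},\ul{a},s}(\pmb{\omega},\theta_r)e^{i\pmb{\omega}\cdot\ul{x}}$ is $C^{1}$ (indeed smooth) for the fixed discrete data $(\theta_r,\ul{x})$, so the two-dimensional composite left-endpoint Riemann sum on the uniform grid~(\ref{discretefrequencies}) with meshes $\Delta\omega^1=\frac{2\pi}{2P+1}$, $\Delta\omega^2=\frac{2\pi}{2Q+1}$ approximates the box-integral with error $O(\Delta\omega^1)+O(\Delta\omega^2)=O\!\left(\frac{1}{2P+1}\right)+O\!\left(\frac{1}{2Q+1}\right)$ by the standard first-order quadrature bound~\cite{RiemannSum}. (iii) Identify that Riemann sum with $\mathbf{CDFT}^{-1}$: since $\left(\frac{1}{2\pi}\right)^{2}\Delta\omega^1\Delta\omega^2=\frac{1}{(2P+1)(2Q+1)}$ and the discrete frequencies in~(\ref{discretefrequencies}) are matched to the spatial samples $\ul{x}\in\mathbb{Z}_P\times\mathbb{Z}_Q$, the sum $\left(\frac{1}{2\pi}\right)^{2}\sum_{p',q'}\hat R_\alpha^{\ul{D},\ul{a},s}(\omega_{p'}^1,\omega_{q'}^2,\theta_r)e^{i(\omega_{p'}^1 x+\omega_{q'}^2 y)}\Delta\omega^1\Delta\omega^2$ equals $\big([\mathbf{CDFT}]^{-1}\hat R_\alpha^{\ul{D},\ul{a},s}(\pmb{\omega}_\cdot^1,\pmb{\omega}_\cdot^2,\theta_r)\big)(\ul{x})$ — precisely the manipulation already written out in~(\ref{DiscreteExactSolutions}). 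Combining (i)--(iii) yields~(\ref{ExactSolutionRiemannSumApproximation}).

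\emph{Main obstacle.} The delicate point is sub-step (ii): to get a genuine $O(1/(2P+1))+O(1/(2Q+1))$ one needs a uniform $C^{1}$ (better $C^{2}$) bound on the integrand over the box, uniform in the discrete angles $\theta_r$, and with the $O$-constants' dependence on the spatial point $\ul{x}$ tracked — the $\pmb{\omega}$-derivatives of $e^{i\pmb{\omega}\cdot\ul{x}}$ grow like $|\ul{x}|$, so the estimate is uniform only over the fixed finite spatial grid. This in turn needs smoothness of the Mathieu-function representation of $\hat R_\alpha^{\ul{D},\ul{a}}$ near $\rho=0$, where the parametrization ($q_\rho\to 0$, and $a_\rho,\nu$, and the Wronskian $W_\rho$ in the denominator) appears to degenerate although the resolvent kernel itself stays smooth and finite there. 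Everything else is routine; the tail $I_\varsigma^s$ is deliberately left unestimated, its smallness for $s>0$ — the first motivation for the regularization — being a consequence of the Fourier-decay analysis in Appendix~\ref{app:A}.
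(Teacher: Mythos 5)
Your proposal is correct and follows essentially the same route as the paper: identity (\ref{ExactSolutionsFourier}) is read off from Eq.~(\ref{ResolventWithGaussian}), and (\ref{ExactSolutionRiemannSumApproximation}) is the standard Riemann-sum/truncation bookkeeping already laid out in Eq.~(\ref{DiscreteExactSolutions}). You in fact supply two details the paper leaves implicit — the commutator computation showing $\Delta=\mathcall{A}_1^2+\mathcall{A}_2^2$ commutes with $Q^{\ul{D},\ul{a}}(\underline{\mathcall{A}})$ (hence with its resolvent), and the observation that the first-order quadrature bound requires smoothness of $\pmb{\omega}\mapsto\hat{R}_{\alpha}^{\ul{D},\ul{a},s}(\pmb{\omega},\theta_r)$ across $\rho=0$ despite the apparent degeneration of the Mathieu parametrization there — both of which strengthen rather than alter the argument.
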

	First of all we recall Eq.~(\ref{ResolventWithGaussian}), from which Eq.~(\ref{ExactSolutionsFourier}) follows. Eq.~(\ref{ExactSolutionRiemannSumApproximation}) follows by standard Riemann-sum approximation akin to Eq.~(\ref{DiscreteExactSolutions}).
	Finally, we note that due to H\"{o}rmander theory \cite{Hoermander} the kernel $R_\alpha^{\ul{D},\ul{a}}$ is smooth on $SE(2)\setminus\{e\}=(0,0,0)$. Now, thanks to the isotropic diffusion, $R_\alpha^{\ul{D},\ul{a},s}$ is well-defined and smooth on the whole group $SE(2)$.
	\begin{remark}
		In the isotropic case $D_{11}=D_{22}$ we have the asympotic formula (for $\rho \gg 0$ fixed):
		\begin{equation}
		\begin{array}{l}
		\cr(D_{11}\rho^2+D_{33}\rho_\theta^2+\alpha I)\hat{R}_{\alpha}^{\ul{D},\ul{a}}(\pmb\omega,\theta)=1
		\Longrightarrow  \hat{R}_{\alpha}^{\ul{D},\ul{a}}(\pmb\omega,\rho_\theta)=\frac{1}{D_{11}\rho^2+D_{33}\rho_\theta^2+\alpha} \approx O(\frac{1}{\rho^2})
		\end{array}
		\end{equation}
	\end{remark}
	Now for
	\begin{equation}
	\begin{array}{l}
	|I_\varsigma^s(\ul{x},r)|=|\int_{\R^2 \setminus [-\varsigma\pi,\varsigma\pi]^2}e^{-s|\pmb{\omega}|^2}\hat{R}_{\alpha}^{\ul{D},\ul{a}}(\pmb{\omega},\theta_r)e^{i\pmb{\omega}\ul{x}}d\pmb{\omega}|
	\leq 2\pi\int_{\varsigma\pi}^\infty e^{-s\rho^2}\frac{C}{\rho}d\rho = \pi C \; \Gamma(0,\pi^2 s \varsigma^2),
	\end{array}
	\end{equation}
	for fixed $\ul{a}$, $C \approx \frac{1}{D_{11}}$ (for $D_{33}$ small), and where $\Gamma(a,z)$ denotes the incomplete Gamma distribution. We have $s=\frac{1}{2}\sigma^2$. For typical parameter settings in the contour enhancement case, $\sigma=1$ pixel length, $D_{11}=1,D_{33}=0.05$, we have
	\begin{align} \label{GammaDistribution}
	|I_\varsigma^s(\ul{x},r)|\leq\left\{
	\begin{aligned}
	&(0.00124)\pi C, &\varsigma=1 \\
	&(10^{-10})\pi C, &\varsigma=2
	\end{aligned}
	\right. \end{align}
	which is sufficiently small for $\varsigma \geq 2$.
	
	\subsubsection{Scale Selection of the Gaussian Mask and Inner-scale}\label{ScaleSelectionGaussianMask}
	In the previous section, we proposed to use a narrow spatial isotropic Gaussian window to control errors caused by using the $\textbf{DFT}^{-1}$. In $\R$, we have $\sqrt{4\pi s} \mathcall{F}G_{s}=G_{\frac{1}{4s}}$, i.e.
	\begin{equation}\label{GaussianFunction}
	\begin{aligned}
	(\mathcall{F}G_{s})(\omega)&=e^{-s||\omega||^2}, \qquad
	G_s(x)=\frac{1}{\sqrt{4\pi s}}e^{\frac{-||x||^2}{4s}},
	\qquad
	\sigma_s \cdot \sigma_f=1.
	\end{aligned}
	\end{equation}
	where $\sigma_f$ denotes the standard deviation of the Fourier window, and $\sigma_s$ denotes the standard deviation of the spatial window.
	In our convention, we always take $\Delta x=\frac{l}{N_s}$ as the spatial pixel unit length, where $l$ gives the spatial physical length and $N_s$ denotes the number of samples.
	
	The size of the fourier Gaussian window can be represented as: $2\sigma_f=\nu\cdot\varsigma\pi$,
	where $\nu\in[\frac{1}{2},1]$ is the factor that specifies the percentage of the maximum frequency we are going to sample in the fourier domain and $\varsigma$ is the oversampling factor. Then, we can represent the size of the continuous and discrete spatial Gaussian window $\sigma_{s}$ and $\sigma_{s}^{Discrete}$ as:
	\begin{equation}
	\sigma_{s}=\frac{2}{\nu \varsigma \pi},\qquad
	\sigma_{s}^{Discrete}=\sigma_s \cdot \frac{l}{N_s}=\frac{2}{\nu \varsigma \pi}\left(\frac{l}{N_s}\right).
	\end{equation}
	From Figure~\ref{fig:GaussianScale}, we can see that a Fourier Gaussian window with $\nu<1$ corresponds to a spatial Gaussian blurring of slightly more than 1 pixel unit.
	\begin{figure}[htbp]
		\centering
		\subfloat{\includegraphics[scale=0.6]{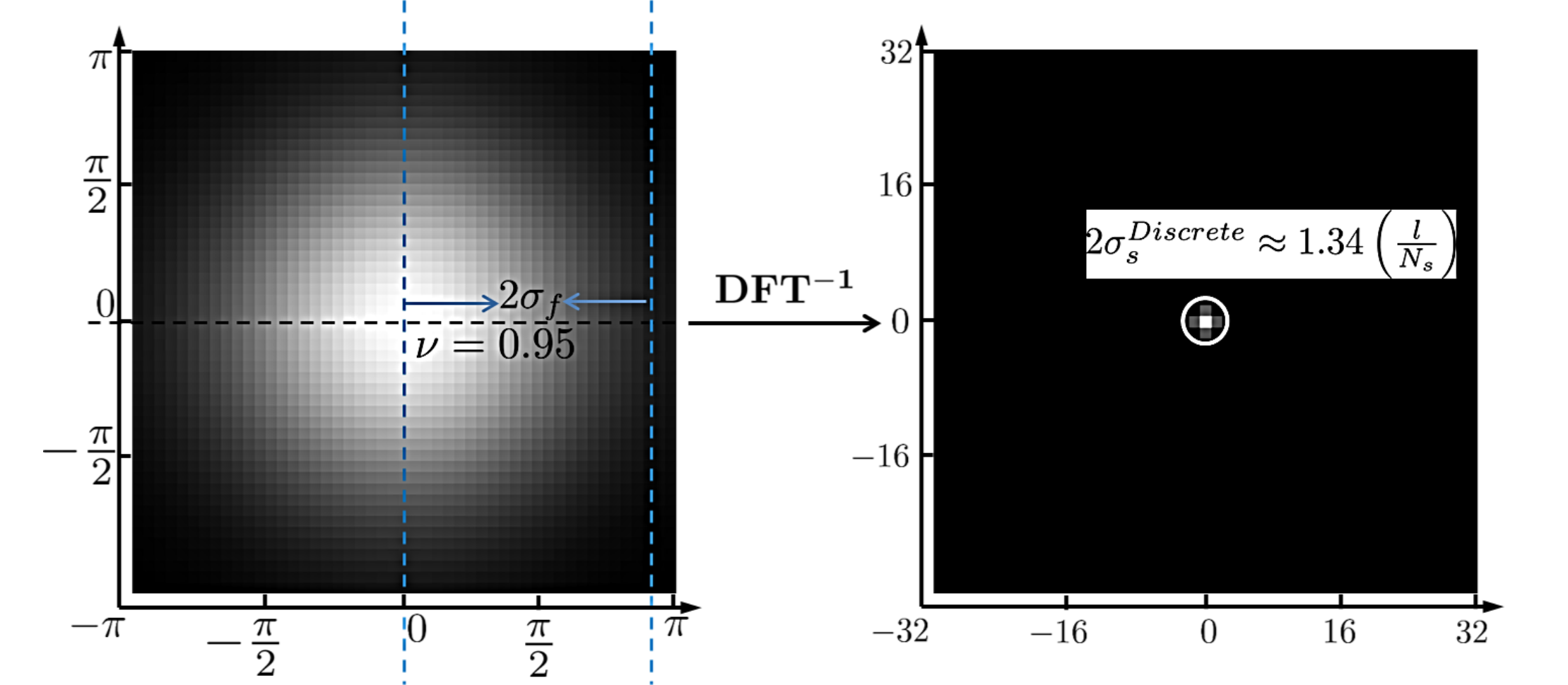}}
		\caption{Illustration of the scales between a Fourier Gaussian window and the corresponding spatial Gaussian window. Here we define the number of samples $N_s=65$.}
		\label{fig:GaussianScale}
	\end{figure}
	If we set the oversampling factor $\varsigma=1$, one has $2\sigma_{s}^{Discrete}\in \left[\Delta x, 2\Delta x\right]$. Then, the scale of the spatial Gaussian window $s_s=\frac{1}{2}(\sigma_s^{Discrete})^2\leq\frac{1}{2}(\Delta x)^2$, in which $\frac{1}{2}(\Delta x)^2$ is called \emph{inner-scale} \cite{FlorackInnerScale1992}, which is by definition the minimum reasonable Gaussian scale due to the sampling distance.
	
	\subsubsection{Comparison by Relative $\ell_K-$errors in the Spatial and Fourier Domain}\label{section:ComparisonRelativeErrorFormula}
	Firstly, we explain how to make comparisons in the Fourier domain. Before the comparison, we apply a normalization such that all the DC components in the discrete Fourier domain add up to 1, i.e.
	\begin{align*}
	\sum_{r=-R}^R\sum_{x=-P}^P\sum_{y=-Q}^Q R_\alpha^{\ul{D},\ul{a}}(x,y,\theta_r)\Delta x \Delta y \Delta \theta=\sum_{r=-R}^R \left(\left[\mathbf{CDFT}\right]R_\alpha^{\ul{D},\ul{a}}(\cdot,\cdot,\theta_r)\right)(0,0)\cdot \Delta\theta=1,
	\end{align*}
	where the $\textbf{CDFT}$ and its inverse are given by
	\begin{equation}
	\begin{array}{l}
	\cr\left[\mathbf{CDFT}\left(R_\alpha^{\ul{D},\ul{a}}(\cdot,\cdot,\theta_r)\right)\right][p',q']:=\sum\limits_{p=-P}^P\sum\limits_{q=-Q}^Q R_\alpha^{\ul{D},\ul{a}}(p,q,\theta_r)e^{\frac{-2\pi i p p'}{2P+1}}e^{\frac{-2\pi i q q'}{2Q+1}},\\
	\cr\left[\mathbf{CDFT^{-1}}\left([p',q']\rightarrow \hat{R}_\alpha^{\ul{D},\ul{a}}(\omega_{p'}^1,\omega_{q'}^2,\theta_r)\right)\right][p,q]\cr\qquad\qquad\qquad:=\left(\frac{1}{2P+1}\frac{1}{2Q+1}\right)\sum\limits_{p'=-P}^P\sum\limits_{q'=-Q}^Q R_\alpha^{\ul{D},\ul{a}}(\omega_{p'}^1,\omega_{q'}^2,\theta_r)e^{\frac{2\pi i p p'}{2P+1}}e^{\frac{2\pi i q q'}{2Q+1}},
	\end{array}
	\end{equation}
	in order to be consistent with the normalization in the continuous domain:
	\begin{align*}
	\int_{-\pi}^\pi\hat{R}_\alpha^{\ul{D},\ul{a}}(0,0,\theta){\rm d}\theta=\int_{-\pi}^\pi\int_\R\int_\R R_\alpha^{\ul{D},\ul{a}}(x,y,\theta){\rm d}x {\rm d}y {\rm d}\theta=1.
	\end{align*}
	
	The procedures of calculating the relative errors $\epsilon_R^f$ in the Fourier domain are given as follows:
	\begin{equation}\label{RelativeError}
	\epsilon_R^f=\frac{\textbf{|}\hat{R}_\alpha^{\ul{D},\ul{a},exact}(\omega_{\cdot}^1,\omega_{\cdot}^2,\theta_\cdot)-\hat{R}_\alpha^{\ul{D},\ul{a},approx}(\omega_{\cdot}^1,\omega_{\cdot}^2,\theta_\cdot)\textbf{|}_{\ell_K(\mathbb{Z}_P \times \mathbb{Z}_Q \times \mathbb{Z}_R )}}{{\textbf{|}\hat{R}_\alpha^{\ul{D},\ul{a},exact}(\omega_{\cdot}^1,\omega_{\cdot}^2,\theta_\cdot)\textbf{|}_{\ell_K(\mathbb{Z}_P \times \mathbb{Z}_Q \times \mathbb{Z}_R )}}
	},
	\end{equation}
	where $K \in \mathbb{N}$ indexes the $\ell_K$ norm on the discrete domain $\mathbb{Z}_P \times \mathbb{Z}_Q \times \mathbb{Z}_R$. Akin to comparisons in the Fourier domain, we compute relative errors $\epsilon_R^s$ in the spatial domain as follows:
	\begin{equation}\label{RelativeError}
	\epsilon_R^s=\frac{\textbf{|}R_\alpha^{\ul{D},\ul{a},exact}(x_\cdot,y_\cdot,\theta_\cdot)-R_\alpha^{\ul{D},\ul{a},approx}(x_\cdot,y_\cdot,\theta_\cdot)\textbf{|}_{\ell_K(\mathbb{Z}_P \times \mathbb{Z}_Q \times \mathbb{Z}_R )}}{{\textbf{|}R_\alpha^{\ul{D},\ul{a},exact}(x_\cdot,y_\cdot,\theta_\cdot)\textbf{|}_{\ell_K(\mathbb{Z}_P \times \mathbb{Z}_Q \times \mathbb{Z}_R )}}
	},
	\end{equation}
	where we firstly normalize the approximation kernel with respect to the $\ell_1(\mathbb{Z}_P \times \mathbb{Z}_Q \times \mathbb{Z}_R )$ norm.
	\section{Experimental Results}\label{section:Experimental results}
	To compare the performance of different numerical approaches with the exact solution, Fourier and spatial kernels with special parameter settings are produced from different approaches in both enhancement and completion cases. The evolution of all our numerical schemes starts with a spatially blurred orientation score spike, i.e. $(G_{\sigma_s}*\delta_0^{\R^2})\otimes\delta_0^{S^1}$, which corresponds to the Fourier Gaussian window mentioned in Section~\ref{ch:comparison} for the error control of the exact kernel in Theorem \ref{thm:DiscreteExactSolutionsErrorEstimation}. We vary $\sigma_s>0$ in our comparisons. We analyze the relative errors of both spatial and Fourier kernels with changing standard deviation $\sigma_s$ of Gaussian blurring in the finite difference and the Fourier based approaches for contour enhancement, see Figure~\ref{fig:RelativeErrorWithChangingSigma}.
	
	All the kernels in our experiments are $\ell_1-$ normalized before comparisons are done. In the contour completion experiments, we construct all the kernels with the number of orientations $N_o = 72$ and spatial dimensions $N_s = 192$, while in the contour enhancement experiments we set $N_o = 48$ and $N_s = 128$. Our experiments are not aiming for speed of convergence in terms of $N_o$ and $N_s$, as this can be derived theoretically from Theorem \ref{th:RelationofFourierBasedWithExactSolution}, we rather stick to reasonable sampling settings to compare our methods, and to analyze a reasonable choice of $\sigma_s > 0$.
	\begin{figure}[!htbp]
		\centering
		\subfloat{\includegraphics[width=.6\textwidth]{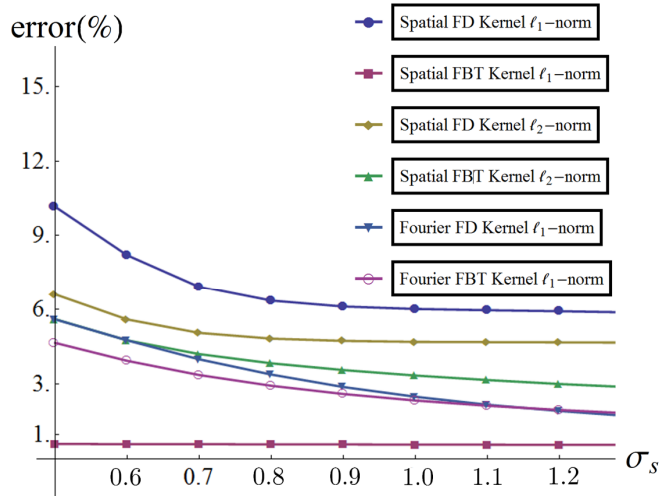}}
		\caption{The relative errors, Eq.~(\ref{RelativeError}), of the finite difference (FD), and Fourier based techniques (FBT) with respect to the exact methods (Exact) for contour enhancement. Both $\ell_1$ and $\ell_2$ normalized spatial and Fourier kernels are calculated based on different standard deviation $\sigma_s$ ranging from 0.5 to 1.7 pixels, with parameter settings $\ul{D}=\{1.,0.,0.03\}, \alpha=0.05$ and time step size $\Delta t=0.005$ in the FD explicit approach.
		}
		\label{fig:RelativeErrorWithChangingSigma}
	\end{figure}
	
	From Figure~\ref{fig:RelativeErrorWithChangingSigma} we deduce that the relative errors of the $\ell_1$ and $\ell_2$ normalized finite difference (FD) spatial kernels converge to an offset of approximately $5\%$, which is understood by additional numerical blurring due to B-spline approximation in Section \ref{section: Left-invariant Finite Differences with B-spline Interpolation}, which is needed for rotation covariance in discrete implementations \cite[Figure~\!10]{Franken2009IJCV}, but which does affect the actual diffusion parameters. The relative errors of the Fourier based techniques (FBT) are very slowly decaying from $0.61\%$ along the axis $\sigma_s$. We conclude that an appropriate stable choice of $\sigma_s$ for fair comparison of our methods is $\sigma_s=1$, recall also Section~ \ref{ScaleSelectionGaussianMask}.

	\begin{table*}[!ht]
		\centering
		\caption{Enhancement kernel comparison of the exact analytic solution with the numerical Fourier based techniques, the stochastic methods and the finite difference schemes.}
		\begin{tabular}{r|l|l|l}
			\toprule
			Relative Error & $\ul{D}=\{1.,0.,0.05\}$ & $\ul{D}=\{1.,0.,0.05\}$ & $\ul{D}=\{1.,0.9,1.\}$ \\
			(\ul{\%})  &    $\alpha=0.01$      &       $\alpha=0.05$      &     $\alpha=0.05$         \\
			\midrule
			$\ell_1$-norm        & Spatial \quad Fourier  & Spatial \quad Fourier & Spatial \quad Fourier  \\
			\midrule
			\textit{Exact-FBT} & \textbf{0.12} \qquad \textbf{1.30}  & \textbf{0.35} \qquad \textbf{1.92} & \textbf{2.27} \qquad \textbf{0.60} \\
			\textit{Exact-Stochastic}  & 2.18 \qquad 3.94 & 1.74 \qquad 3.82  & 2.66 \qquad 2.54  \\
			\textit{Exact-FDExplicit}  & 5.07 \qquad 1.82 & 5.70 \qquad 2.34  & 2.99 \qquad 3.56 \\
			\textit{Exact-FDImplicit}  & 5.08 \qquad 2.29 & 5.70 \qquad 3.03  & 3.00 \qquad 5.59  \\
			\midrule
			$\ell_2$-norm        & Spatial \quad Fourier  & Spatial \quad Fourier & Spatial \quad Fourier  \\
			\midrule
			\textit{Exact-FBT} & \textbf{1.40} \qquad \textbf{1.37}& \textbf{2.39} \qquad \textbf{2.30} & \textbf{2.24} \qquad \textbf{1.23}  \\
			\textit{Exact-Stochastic}  & 2.26 \qquad 2.32& 3.50 \qquad 3.16 & 2.93 \qquad 2.65  \\
			\textit{Exact-FDExplicit}  & 4.80 \qquad 1.72& 4.97 \qquad 1.60 & 2.90 \qquad 3.15  \\
			\textit{Exact-FDImplicit}  & 5.17 \qquad 2.11& 5.80 \qquad 2.29 & 5.42 \qquad 5.56  \\
			\bottomrule
		\end{tabular}%
		\caption*{
			\textbf{Measurement method abbreviations}: ($\textit{Exact}$) - Ground truth measurements based on the analytic solution by using Mathieu functions in Section~\ref{3GeneralFormsExactSolutions}, ($\textit{FBT}$) - Fourier based techniques in Section~\ref{section:Duitsmatrixalgorithm} and Section~\ref{section:FourierBasedForEnhancement}, ($\textit{Stochastic}$) - Stochastic method in Section~\ref{section:MonteCarloStochasticImplementation} (with $\Delta t=0.02$ and $10^8$ samples), ($\textit{FDExplicit}$) and ($\textit{FDImplicit}$) - Explicit and implicit left-invariant finite difference approaches with B-Spline interpolation in Section~\ref{section:Left-invariant Finite Difference Approaches for Contour Enhancement}, respectively. The settings of time step size are $\Delta t=0.005$ in the $\textit{FDExplicit}$ scheme, and $\Delta t=0.05$ in the $\textit{FDImplicit}$ scheme. }
		\label{tab:RelativeErrorEnhancementComparison}%
	\end{table*}

	Table~\ref{tab:RelativeErrorEnhancementComparison} shows the validation results of our numerical enhancement kernels, in comparison with the exact solution using the same parameter settings. The first 5 rows and the last 5 rows of the table show the relative errors of the $\ell_1$ and $\ell_2$ normalized kernels separately. In all the three parameter settings, the kernels obtained by using the FBT method provides the best approximation to the exact solutions due to the smallest relative errors in both the spatial and the Fourier domain. 
	Overall, the stochastic approach (a Monte Carlo simulation with $\Delta t=0.02$
	and $10^8$ samples) performs second best. 
	
	Although the finite difference scheme performs less, compared to the more computationally demanding FBT and the stochastic approach, the relative errors of the FD explicit approach are still acceptable, less than $5.7\%$. The $5\%$ offset is understood by the B-spline interpolation to compute on a left-invariant grid. Here we note that finite differences do have the advantage of straightforward extensions to the non-linear diffusion processes \cite{Citti,Creusen2013,FrankenPhDThesis,Franken2009IJCV}, which will also be employed in the subsequent application section. For the FD implicit approach, larger step size can be used than the FD explicit approach in order to achieve a much faster implementation, but still with negligible influence on the relative errors.
	
	Table~\ref{tab:RelativeErrorCompletionComparison} shows the validation results of the numerical completion kernels with three sets of parameters. Again, all the $\ell_1$ and $\ell_2$ normalized FBT kernels show us the best performance (less than $1.2\%$ relative error) in the comparison.
	\begin{table*}[!ht]
		\centering
		\caption{Completion kernel comparison of the exact analytic solution with the numerical Fourier based techniques, the stochastic methods and the finite difference schemes.}
		\begin{tabular}{r|l|l|l}
			\toprule
			Relative Error & $\ul{D}=\{0.,0.,0.08\}$& $\ul{D}=\{0.,0.,0.08\}$  & $\ul{D}=\{0.,0.,0.18\}$ \\
			& $\ul{a}=(1.,0.,0.)$ & $\ul{a}=(1.,0.,0.)$ & $\ul{a}=(1.,0.,0.)$ \\
			(\ul{\%})  &    $\alpha=0.01$  &       $\alpha=0.05$       &     $\alpha=0.05$          \\
			\midrule
			$\ell_1$-norm        & Spatial \quad Fourier  & Spatial \quad Fourier & Spatial \quad Fourier  \\
			\midrule
			\textit{Exact-FBT} & \textbf{0.02} \qquad \textbf{1.06}& \textbf{0.11} \qquad \textbf{1.17} & \textbf{0.05} \qquad \textbf{0.52}  \\
			\textit{Exact-Stochastic} & 2.49 \qquad 3.31 & 2.37 \qquad 5.40  & 1.95 \qquad 4.26\\
			\textit{Exact-FDExplicit}  & 1.91 \qquad 8.36& 4.29  \qquad 8.68  & 4.57 \qquad 9.03 \\
			\midrule
			$\ell_2$-norm        & Spatial \quad Fourier  & Spatial \quad Fourier & Spatial \quad Fourier  \\
			\midrule
			\textit{Exact-FBT} & \textbf{0.94} \qquad \textbf{1.21}& \textbf{1.20} \qquad \textbf{1.50} & \textbf{0.65} \qquad \textbf{0.79}  \\
			\textit{Exact-Stochastic} & 4.96 \qquad 3.40 & 4.84 \qquad 3.25  &  4.39 \qquad 2.45\\
			\textit{Exact-FDExplicit}  & 6.60 \qquad 5.50& 7.92 \qquad 6.56  & 8.46 \qquad 6.48 \\
			\bottomrule
		\end{tabular}%
		\caption*{
			\textbf{Measurement method abbreviations}:
			($\textit{Exact}$) - Ground truth measurements based on the analytic solution by using Mathieu functions in Section~\ref{3GeneralFormsExactSolutions}, ($\textit{FBT}$) - Fourier based techniques in Section~\ref{section:Duitsmatrixalgorithm} and Section~\ref{section:FourierBasedForEnhancement}, ($\textit{Stochastic}$) - Stochastic method in Section~\ref{section:MonteCarloStochasticImplementation}
			(with $\Delta t=0.02$ and $10^8$ samples), ($\textit{FDExplicit}$) - Explicit left-invariant finite difference approaches with B-Spline interpolation in Section~\ref{section:Left-invariant Finite Difference Approaches for Contour Enhancement}. The settings of time step size are $\Delta t=0.005$ in the $\textit{FDExplicit}$ scheme.}
		\label{tab:RelativeErrorCompletionComparison}%
	\end{table*}
	\section{Application of Contour Enhancement to Improve Vascular Tree Detection in Retinal Imaging}\label{section:Applications on Retinal Image}
	In this section, we will show the potential of achieving better vessel tracking results by applying the $SE(2)$ contour enhancement approach on challenging retinal images where the vascular tree (starting from the optic disk) must be detected. The retinal vasculature provides a convenient mean for non-invasive observation of the human circulatory system. A variety of eye-related and systematic diseases such as glaucoma\cite{CassonGlaucoma2012}, age-related macular degeneration, diabetes, hypertension, arteriosclerosis or Alzheimer's disease affect the vasculature and may cause functional or geometric changes\cite{Ikram2013}. Automated quantification of these defects enables
	massive screening for systematic and eye-related vascular diseases on the basis of fast and inexpensive imaging modalities, i.e. retinal photography. To automatically extract and assess the state of the retinal vascular tree, vessels have to be segmented, modeled and analyzed. Bekkers et al.\cite{BekkersJMIV} proposed a fully automatic multi-orientation vessel tracking method (ETOS) that performs excellently in comparison with other state-of-the-art algorithms. However, the ETOS algorithm often suffers from low signal to noise ratios, crossings and bifurcations, or some problematic regions caused by leakages/blobs due to some diseases. See Figure~\ref{fig:ProblematicRetinalVesselTracking}.
	\begin{figure}[htbp]
		\centering
		\subfloat{\includegraphics[width=.75\textwidth]{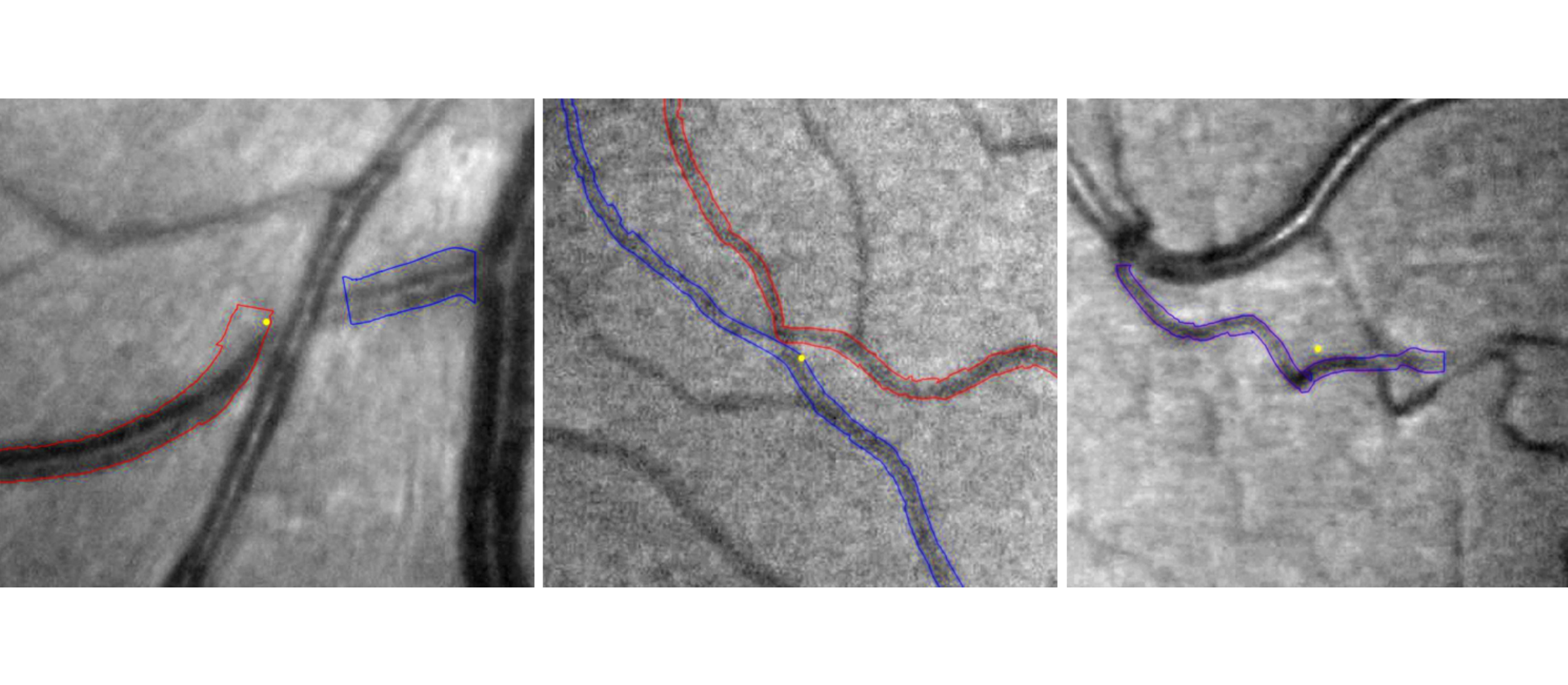}}
		\caption{Three problematical cases in the ETOS tracking algorithm\cite{BekkersJMIV}. From left to right: blurry crossing parts, small vessels with noise and small vessels with high curvature.
		}
		\label{fig:ProblematicRetinalVesselTracking}
	\end{figure}
	
	We aim to solve these problems via left-invariant contour enhancement processes on invertible orientation scores as pre-processing for subsequent tracking\cite{BekkersJMIV}, recall Figure~\ref{fig:OSIntro}. In our enhancements, we rely on non-linear extension \cite{Franken2009IJCV} of finite difference implementations of the contour enhancement process to improve adaptation of our model to the data in the orientation score. Finally, the ETOS tracking algorithm \cite{BekkersJMIV} is performed on the enhanced retinal images with respect to various problematic tracking cases, in order to show the benefit of the left-invariant diffusion on $SE(2)$.
	
	As a proof of concept, we show examples of tracking on left-invariantly diffused invertible orientation scores on cases where standard ETOS-tracking without left-invariant diffusion fails, see~\mbox{Figure~\ref{fig:VesselTrackingonCEDOSRetinalImages}.}
	\begin{figure}[htbp]
		\centering
		\includegraphics[width=.75\textwidth]{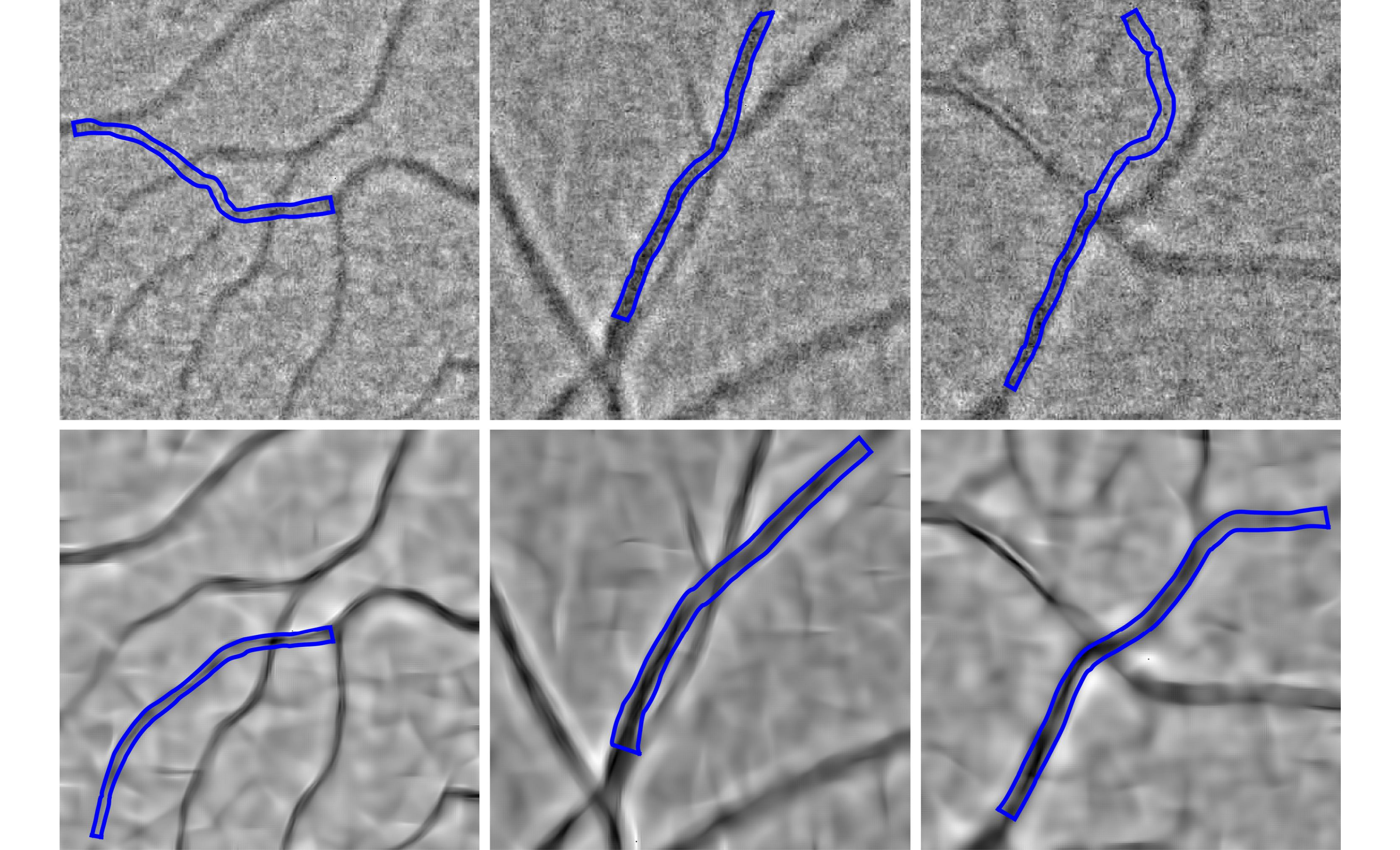}
		\caption{Vessel tracking on retinal images. From up to down: the original retinal images with erroneous ETOS tracking, the enhanced retinal images with accurate tracking after enhancement.}
		\label{fig:VesselTrackingonCEDOSRetinalImages}
	\end{figure}
	
	All the experiments in this section use the same parameters. All the retinal images are selected with the size $400 \times 400$. Parameters used for tracking are the same as the parameters of the ETOS algorithm in \cite{BekkersJMIV}: Number of orientations $N_o$ = 36, wavelets-periodicity = $2\pi$.  The following parameters are used for the non-linear coherence-enhancing diffusion (CED-OS): spatial scale of the Gaussian kernel for isotropic diffusion is $t_s=\frac{1}{2}\sigma_s^2=12$, the scale for computing Gaussian derivatives is $t_s'=0.15$, the metric $\beta=0.058$, the end time $t=20$, and $c=1.2$ for controlling the balance between isotropic diffusion and anisotropic diffusion, for details see \cite{Franken2009IJCV}.
	
	\section{Conclusion}
	
	We analyzed linear left-invariant diffusion, convection-diffusion and their resolvents on invertible orientation scores, following both 3 numerical and 3 exact approaches. In particular, we considered the Fokker-Planck equations of Brownian motion for contour enhancement, and the direction process for contour completion. We have provided 3 exact solution formulas for the generic left-invariant PDE's on $SE(2)$ to place previous exact formulas into context. These formulas involve either infinitely many periodic or non-periodic Mathieu functions, or only 4 non-periodic Mathieu functions.
	
	Furthermore, as resolvent kernels suffer from severe singularities that we analyzed in this article, we propose a new time integration via Gamma distributions, corresponding to iterations of resolvent kernels. We derived new asymptotic formulas
	for the resulting kernels and show benefits towards applications, illustrated via stochastic completion fields in Figure~\ref{fig:Gamma}. 
	
	Numerical techniques can be categorized into 3 approaches: finite difference, Fourier based and stochastic approaches. Regarding the finite difference schemes, rotation and translation covariance on reasonably sized grids requires B-spline interpolation \cite{Franken2009IJCV} (towards a left-invariant grid), including additional numerical blurring. We applied this both to implicit schemes and explicit schemes with explicit stability bound. Regarding Fourier based techniques (which are equivalent to $SE(2)$ Fourier methods, recall Remark~\ref{rem:42}), we have set an explicit connection in Theorem \ref{th:RelationofFourierBasedWithExactSolution} to the exact representations in periodic Mathieu functions from which convergence rates are directly deduced. This is confirmed in the experiments, as they perform best in the numerical comparisons. 
	
	We compared the exact analytic solution kernels to the numerically computed kernels for all schemes. We computed the relative $\ell_1$ and $\ell_2$ errors in both spatial and Fourier domain. We also analyzed errors due to Riemann sum approximations that arise by using the $\textbf{DFT}^{-1}$ instead of using the $\textbf{CFT}^{-1}$. Here, we needed to introduce a spatial Gaussian blurring with small ``inner-scale'' due to finite sampling. This small Gaussian blurring allows us, to control truncation errors, to maintain exact solutions, and to reduce the singularities.
	We implemented all the numerical schemes in $\textit{Mathematica}$, and constructed the exact kernels based on our own implementation of Mathieu functions to avoid the numerical errors and slow speed caused by $\textit{Mathematica}$'s Mathieu functions.
	
	We showed that FBT, stochastic and FD provide reliable numerical schemes. Based on the error analysis we demonstrated that best numerical results were obtained using the FBT with negligible differences. The stochastic approach (via a Monte Carlo simulation) performs second best.
	The errors from the FD method are larger, but still located in an admissible scope, and they do allow non-linear adaptation. Preliminary results in a retinal vessel tracking application show that the PDE's in the orientation score domain preserve the crossing parts and help the ETOS algorithm\cite{BekkersJMIV} to achieve more robust tracking.
	
	\section*{Acknowledgements}
	
	The research leading to the results of this article
	has received funding from the European Research Council under the European Community's 7th Framework Programme (FP7/2007-2014)/ERC grant agreement No. 335555. The China Scholarship Council (CSC) is gratefully acknowledged for the financial support No. 201206300010.
	\begin{figure}[htbp]
		\centering
		\includegraphics[width=.75\textwidth]{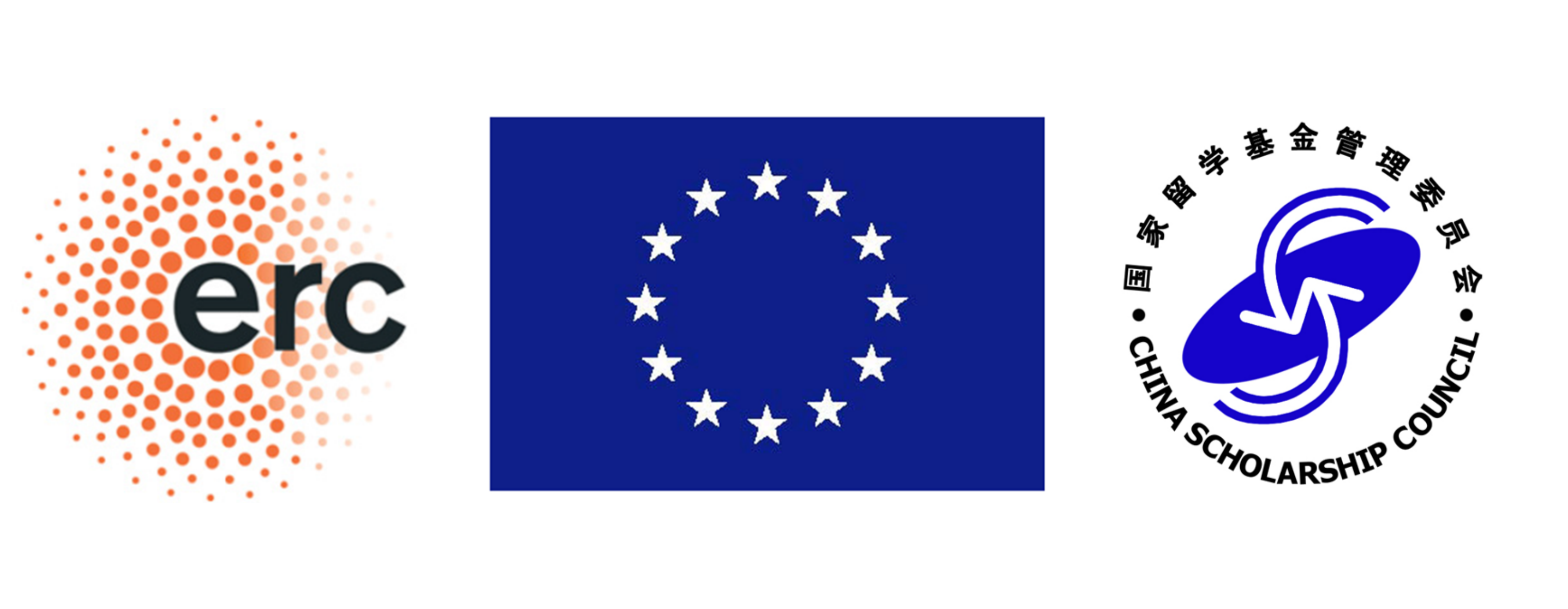}
		\label{fig:logos}
	\end{figure}
	
	\appendix
	
	\section{Invertible Orientation Scores of 2D-images and
		Continuous Wavelet Theory \label{app:new}}
	
	\noindent The continuous wavelet transform constructed by unitary irreducible representations of locally compact groups was first formulated by Grossman et al.~\cite{Grossmann1985}. Given a Hilbert space $H$ and a unitary irreducible representation $g\mapsto \mathcall{U}_g$ of any locally compact group $G$ in $H$, a vector $0\neq\psi \in H$ is called admissible if
	\begin{align}\label{CpsiDef}
	C_{\psi}:= \int_{G}\frac{|(\mathcall{U}_{g}\psi,\psi)|^2}{(\psi,\psi)_{H}}d\mu_{G}(g)<\infty,
	\end{align}
	where $\mu_G$ denotes the left-invariant Haar measure. Given an admissible vector $\psi$ and a unitary representation of a locally compact group $G$ in $H$, the CS transform $\widetilde{\mathcall{W}}_\psi:H\rightarrow \mathbb{L}_{2}(G)$ is given by
	$(\widetilde{\mathcall{W}}_\psi[f])(g)=(\mathcall{U}_{g}\psi,f)_H$. It is well known in mathematical physics~\cite{Alibook}, that $\widetilde{\mathcall{W}}_\psi$ is an isometric transform onto a closed reproducing kernel space $\mathbb{C}_{K_{\psi}}^{G}$ with $K_{\psi}(g,g')=\frac{1}{C_\psi}(\mathcall{U}_g\psi,\mathcall{U}_{g'}\psi)_{H}$ as an $\mathbb{L}_2$-subspace.
	
	Now in our orientation score transform $f \mapsto \mathcall{W}_{\psi}f$, Eq.~(\ref{OrientationScoreConstruction}), we restrict to disk-limited images\footnote{Such a restriction is convenient and reasonable for applications in view of the Nyquist frequency. Nevertheless, it is not strictly necessary for an $\mathbb{L}_2$-isometry, when one extends
		continuous wavelets to distributional wavelet transforms \cite[Thm~1,App.~B]{BekkersJMIV}.}:
	\[
	f\in \mathbb{L}_{2}^{\varrho}(\R^2)=\{g \in \mathbb{L}_{2}(\R^{2}) \;|\;
	\textrm{supp}\mathcall{F}_{\R^2}g \subset B_{\ul{0},\varrho}\},
	\]
	With $B_{\ul{0},\varrho}=\{\www \in \R^{2} \;|\; \|\www\|\leq \varrho\}$, with $\varrho>0$ close to the Nyquist-frequency.
	We set the left-regular representation $g \mapsto \mathcall{U}_{g}$ given by
	$(\mathcall{U}_{g=(\ul{x},\theta)}f)(\ul{y})=f(R_{\theta}^{-1}(\ul{y}-\ul{x}))$ as the unitary representation.
	
	We distinguish between the isometric wavelet transform $\widetilde{\mathcall{W}}_{\psi}:\mathbb{L}_2^{\varrho}(\mathbb{R}^2)\rightarrow\mathbb{L}_2(G)$ and the unitary wavelet transform $\mathcall{W}_{\psi}^{\varrho}:\mathbb{L}_2(\mathbb{R}^2)\rightarrow \mathbb{C}_K^G$,
	as they have different adjoint transforms.
	We drop the formal requirement of $\mathcall{U}$ being square-integrable and $\psi$ being admissible in the sense of \eqref{CpsiDef}, as it is not strictly needed/applicable for lots of cases.
	This includes our case of interest $G=SE(2)$ and its left-regular action on $\mathbb{L}_2(\mathbb{R}^2)$ where $\mathcall{W}_{\psi}f(g)=(\mathcall{U}_{g}\psi,f)_{\mathbb{L}_{2}(\R^{2})}$ gives rise to an orientation score.
	We call $\psi\in\mathbb{L}_2(\mathbb{R}^2) \cap \mathbb{L}_{1}(\mathbb{R}^2)$ an admissible vector if
	\begin{align}
	0<M_\psi(\omega):=(2\pi)
	\int\limits_{-\pi}^{\pi}\left|\mathcall{F}_{\R^2}\psi(R^{-1}_{\theta}\omega)
	\right|^2 \,
	{\rm d}\theta <\infty \text{ for all } \omega\in B_{\ul{0},\varrho}.
	\label{Evo21}
	\end{align}
	Note that $\mathbb{L}_{1}(\R^{2})$ implies that $\mathcall{F}_{\R^2}\psi$ and $M_{\psi}$ are continuous functions
	vanishing at infinity.
	
	From the general theory of reproducing kernels spaces, see e.g. \cite{Alibook}, \cite[Thm.18,Cor.4]{DuitsPhDThesis},
	it follows that $\mathcall{W}_{\psi}:\mathbb{L}_{2}^{\varrho}(\R^2) \mapsto \mathbb{C}_{K}^{SE(2)}$ is unitary,
	where $\mathbb{C}_{K}^{SE(2)}$ denotes the unique \cite{Aronszajn1950} reproducing kernel space consisting of complex-valued
	functions on $SE(2)$ with reproducing kernel
	\[
	K_{(\ul{x},\theta)}(\ul{x}',\theta')=(\mathcall{U}_{(\ul{x},\theta)}\psi, \mathcall{U}_{(\ul{x}',\theta')}\psi)_{\mathbb{L}_{2}(\R^{2})}.
	\]
	Unfortunately, the characterization of the inner-product and norm on the space of orientation scores $\mathbb{C}_{K}^{SE(2)}$ via its reproducing kernel
	is relatively complicated \cite{Martens}. Therefore, we provide a basic characterization of this inner-product next.
	For an admissible vector $\psi\in \mathbb{L}_2(\mathbb{R}^2)$, the span of $\{\mathcall{U}_{g}\psi \,|\, g\in G\}$, is dense in $\mathbb{L}_2(\mathbb{R}^2)$.
	The next construction is in line with general admissibility conditions in \cite[Ch.5]{Fuehrbook}.
	\begin{theorem}\label{MPsiRecon}
		Let $\psi$ be an admissible vector in the sense that (\ref{Evo21}) is satisfied.
		Then $\mathcall{W}_{\psi}: \mathbb{L}_{2}^{\rho}(\R^{2}) \to \mathbb{C}_{K}^{SE(2)}$ is unitary, and we have
		\[
		(f,g)_{\mathbb{L}_{2}(\R^{2})} = (\mathcall{W}_{\psi}f, \mathcall{W}_{\psi}g)_{M_{\psi}},
		\]
		where $(U,V)_{M_{\psi}}=(\mathcall{T}_{M_{\psi}}U, \mathcall{T}_{M_{\psi}}V)_{\mathbb{L}_{2}(SE(2))}$
		with operator $\mathcall{T}_{M_{\psi}}: \mathbb{C}_{K}^{SE(2)} \to \mathbb{L}_{2}(SE(2))$ given by
		\[
		[\mathcall{T}_{M_{\psi}}U](\ul{x},\theta)=\mathcall{F}_{\R^2}^{-1}\left[\omega \mapsto
		(2\pi)^{-\frac{1}{2}} M_{\psi}^{-\frac{1}{2}}(\omega) \mathcall{F}_{\R^2}U(\omega,\theta)\right](\ul{x}).
		\]
	\end{theorem}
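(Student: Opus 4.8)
### Proof Proposal for Theorem~\ref{MPsiRecon}

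The plan is to reduce the claimed isometry to a Plancherel-type identity in the spatial Fourier domain, exploiting the fact that the orientation-score transform $\mathcall{W}_{\psi}$ is (by construction) a correlation in the $\ul{x}$-variable parametrised by $\theta$. First I would fix $f,g \in \mathbb{L}_{2}^{\varrho}(\R^{2})$ and write out $\mathcall{W}_{\psi}f(\ul{x},\theta)=(\mathcall{U}_{(\ul{x},\theta)}\psi, f)_{\mathbb{L}_{2}(\R^{2})}$; taking the spatial Fourier transform in $\ul{x}$ and using that $\mathcall{F}_{\R^2}$ intertwines translation with modulation, one obtains
\[
\mathcall{F}_{\R^{2}}\left[\mathcall{W}_{\psi}f(\cdot,\theta)\right](\www)
= (2\pi)\, \overline{\mathcall{F}_{\R^{2}}\psi(R_{\theta}^{-1}\www)}\; \mathcall{F}_{\R^{2}}f(\www),
\]
up to the normalisation convention for $\mathcall{F}_{\R^2}$ fixed in the excerpt. (The rotation $R_\theta^{-1}$ enters because $\psi$ is rotated by $\theta$ inside the correlation.) This is the key computation and everything else is bookkeeping.

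Next I would apply the operator $\mathcall{T}_{M_{\psi}}$: by its definition,
\[
\mathcall{F}_{\R^{2}}\left[\mathcall{T}_{M_{\psi}}(\mathcall{W}_{\psi}f)(\cdot,\theta)\right](\www)
= (2\pi)^{-\frac12} M_{\psi}^{-\frac12}(\www)\, \mathcall{F}_{\R^{2}}\left[\mathcall{W}_{\psi}f(\cdot,\theta)\right](\www)
= (2\pi)^{\frac12} M_{\psi}^{-\frac12}(\www)\, \overline{\mathcall{F}_{\R^{2}}\psi(R_{\theta}^{-1}\www)}\, \mathcall{F}_{\R^{2}}f(\www).
\]
Then, using the Plancherel theorem on $\mathbb{L}_{2}(\R^{2})$ for the $\ul{x}$-integral and Fubini to exchange the $\www$- and $\theta$-integrals, I get
\[
(\mathcall{W}_{\psi}f,\mathcall{W}_{\psi}g)_{M_{\psi}}
= \int_{-\pi}^{\pi}\!\int_{\R^{2}} (2\pi)\, M_{\psi}^{-1}(\www)\, \left|\mathcall{F}_{\R^{2}}\psi(R_{\theta}^{-1}\www)\right|^{2}\, \mathcall{F}_{\R^{2}}f(\www)\, \overline{\mathcall{F}_{\R^{2}}g(\www)}\; {\rm d}\www\, {\rm d}\theta.
\]
Carrying out the $\theta$-integral first and recognising, via definition~\eqref{Evo21}, that $(2\pi)\int_{-\pi}^{\pi}|\mathcall{F}_{\R^{2}}\psi(R_{\theta}^{-1}\www)|^{2}\,{\rm d}\theta = M_{\psi}(\www)$, the factor $M_{\psi}^{-1}(\www)M_{\psi}(\www)=1$ on $B_{\ul 0,\varrho}$ (which is where $\mathcall{F}_{\R^2}f$ is supported), so the integral collapses to $\int_{\R^{2}}\mathcall{F}_{\R^{2}}f\,\overline{\mathcall{F}_{\R^{2}}g}\,{\rm d}\www = (f,g)_{\mathbb{L}_{2}(\R^{2})}$ by Plancherel. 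This gives the stated identity; polarisation then yields the norm equality, and surjectivity onto $\mathbb{C}_{K}^{SE(2)}$ is immediate since $\mathbb{C}_{K}^{SE(2)}$ is \emph{defined} as the range of $\mathcall{W}_{\psi}$, so $\mathcall{W}_{\psi}$ is a unitary isomorphism onto it once it is a well-defined isometry into a Hilbert space with the $(\cdot,\cdot)_{M_{\psi}}$ inner product.

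The main obstacle, and the step requiring care, is the justification that $\mathcall{T}_{M_{\psi}}$ is well-defined on all of $\mathbb{C}_{K}^{SE(2)}$ and that the interchanges of integration (Fubini) and the pointwise manipulations of $M_{\psi}^{-1}$ are legitimate despite $M_{\psi}$ possibly vanishing or being small off $B_{\ul 0,\varrho}$: one must use that elements $U=\mathcall{W}_{\psi}f$ of the orientation-score space have spatial Fourier transform supported in $B_{\ul 0,\varrho}$ (inherited from $\mathcall{F}_{\R^2}f$ and the convolution structure above), where $M_{\psi}>0$ by the admissibility hypothesis~\eqref{Evo21}, so $M_{\psi}^{-\frac12}\mathcall{F}_{\R^{2}}U$ is a genuine $\mathbb{L}_{2}$-function and all the steps are integrable. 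I would also need to invoke $\psi \in \mathbb{L}_{1}(\R^2)$ to ensure $\mathcall{F}_{\R^2}\psi$ is continuous and bounded, so the $\theta$-integral defining $M_\psi$ converges and the correlation kernel is well-behaved; these are exactly the points flagged after~\eqref{Evo21} in the excerpt. Once the support restriction is in place, the rest is a routine Plancherel--Fubini argument, and I would cite \cite{Alibook}, \cite[Thm.18,Cor.4]{DuitsPhDThesis} and \cite[Ch.5]{Fuehrbook} for the general reproducing-kernel-space framework that guarantees $\mathbb{C}_{K}^{SE(2)}$ with this inner product is the correct target Hilbert space.
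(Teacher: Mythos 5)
Your proposal is correct, and it is worth noting that the paper itself does not actually write out a proof of this theorem: it asserts the unitarity by appeal to the general theory of reproducing kernel spaces and coherent state transforms, citing \cite{Alibook}, \cite[Thm.18, Cor.4]{DuitsPhDThesis} and \cite[Ch.5]{Fuehrbook}, and then states the $M_\psi$-inner-product characterization without further argument. What you supply is the natural self-contained route: the identity $\mathcall{F}_{\R^2}[\mathcall{W}_\psi f(\cdot,\theta)](\www)=2\pi\,\overline{\hat\psi(R_\theta^{-1}\www)}\hat f(\www)$ is correct under the paper's unitary Fourier convention (the correlation becomes a product with an extra $2\pi$ from the convolution theorem), the application of $\mathcall{T}_{M_\psi}$ and the Plancherel--Fubini collapse of the $\theta$-integral against definition \eqref{Evo21} then give $(\mathcall{W}_\psi f,\mathcall{W}_\psi g)_{M_\psi}=\int M_\psi^{-1}M_\psi\,\hat f\,\overline{\hat g}=(f,g)$ on $B_{\ul 0,\varrho}$, and surjectivity is indeed trivial since $\mathbb{C}_K^{SE(2)}$ is defined as the range. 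Your handling of the delicate points (support of $\hat f$ in $B_{\ul 0,\varrho}$ where $M_\psi>0$, continuity of $M_\psi$ from $\psi\in\mathbb{L}_1$, well-definedness of $\mathcall{T}_{M_\psi}$ on the score space) is exactly what is needed to make the abstract citation concrete. One minor caution: the introduction of the paper defines $M_\psi$ without the factor $2\pi$ that appears in \eqref{Evo21}; since the theorem explicitly invokes \eqref{Evo21}, your bookkeeping is the consistent one, but be aware of the discrepancy if you cross-reference the reconstruction formula \eqref{OrientationScoreReconstruction}. What the paper's abstract approach buys is generality (it applies to any square-integrable-type representation and identifies $\mathbb{C}_K^{SE(2)}$ as the unique RKHS with the stated kernel); what your computation buys is an explicit verification that the specific weight $M_\psi^{-1/2}$ in $\mathcall{T}_{M_\psi}$ is the correct one, which the abstract theory does not hand you directly.
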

	\begin{corollary}\label{corr:essence}
		Let $M_{\psi}(\www)>0$ for all $\www \in \R^{2}$.
		The space of orientation scores $\mathbb{C}_{K}^{SE(2)}$ is a closed subspace of $\mathbb{H}_{\psi} \otimes \mathbb{L}_{2}(S^{1})$,
		where $\mathbb{H}_{\psi}:= \{ f \in \mathbb{L}_{2}(\R^{2})\; |\; M_{\psi}^{-\frac{1}{2}} \mathcall{F}_{\R^2}f \in \mathbb{L}_{2}(B_{\ul{0},\varrho})\}$.
		The orthogonal projection $\mathbb{P}_{\psi}$ of $\mathbb{H}_{\psi} \otimes \mathbb{L}_{2}(S^{1})$ onto $\mathbb{C}_{K}^{SE(2)}$ is given by
		\[
		(\mathbb{P}_{\psi}U)(\ul{x},\theta)=(K_{(\ul{x},\theta)}, U)_{M_{\psi}}= (\mathcall{W}_{\psi} \mathcall{W}_{\psi}^{*,ext} U)(\ul{x},\theta),
		\]
		where $\mathcall{W}_{\psi}^{*,ext}: \mathbb{H}_{\psi}^{\varrho} \otimes \mathbb{L}_{2}(S^{1}) \to \mathbb{L}_{2}(\R^{2})$ is the natural extension of the adjoint given by
		\begin{equation} \label{Recform}
		\mathcall{W}_{\psi}^{*,ext} U= \mathcall{F}_{\R^2}^{-1}\left[ M_{\psi}^{-1}
		\mathcall{F}_{\R^2} \left[ \ul{x} \mapsto
		\int \limits_{-\pi}^{\pi} (\psi_{\theta+\pi} * U(\cdot,\theta))(\ul{x}) {\rm d}\theta \right]
		\right].
		\end{equation}
	\end{corollary}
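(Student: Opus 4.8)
The plan is to transport the whole statement through the ``whitening'' operator $\mathcall{T}_{M_{\psi}}$ of Theorem~\ref{MPsiRecon}. The same formula $\widetilde{\mathcall{T}}_{M_{\psi}}U := \mathcall{F}_{\R^{2}}^{-1}\bigl[\www \mapsto (2\pi)^{-1/2} M_{\psi}^{-1/2}(\www)\, \mathcall{F}_{\R^{2}}U(\www,\theta)\bigr]$ extends $\mathcall{T}_{M_{\psi}}$ to all of $\mathbb{H}_{\psi}\otimes\mathbb{L}_{2}(S^{1})$, and by the very definition of $\mathbb{H}_{\psi}$ it is an isometric isomorphism onto $\mathbb{L}_{2}^{\varrho}(\R^{2})\otimes\mathbb{L}_{2}(S^{1})\subseteq\mathbb{L}_{2}(SE(2))$. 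Hence $(U,V)_{M_{\psi}}:=(\widetilde{\mathcall{T}}_{M_{\psi}}U,\widetilde{\mathcall{T}}_{M_{\psi}}V)_{\mathbb{L}_{2}(SE(2))}$ turns $\mathbb{H}_{\psi}\otimes\mathbb{L}_{2}(S^{1})$ into a Hilbert space whose inner product restricts, on $\mathbb{C}_{K}^{SE(2)}$, to the one of Theorem~\ref{MPsiRecon}; this is the ambient space in the statement.

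First I would check $\mathbb{C}_{K}^{SE(2)}=\mathcall{W}_{\psi}(\mathbb{L}_{2}^{\varrho}(\R^{2}))\subseteq\mathbb{H}_{\psi}\otimes\mathbb{L}_{2}(S^{1})$. From $\mathcall{F}_{\R^{2}}[\mathcall{W}_{\psi}f(\cdot,\theta)](\www)=2\pi\,\overline{\mathcall{F}_{\R^{2}}\psi(R_{\theta}^{-1}\www)}\,\mathcall{F}_{\R^{2}}f(\www)$ and the defining identity $M_{\psi}(\www)=2\pi\int_{-\pi}^{\pi}|\mathcall{F}_{\R^{2}}\psi(R_{\theta}^{-1}\www)|^{2}\,{\rm d}\theta$, Tonelli's theorem collapses the $\theta$-integral and yields $\int_{-\pi}^{\pi}\int_{B_{\ul{0},\varrho}} M_{\psi}^{-1}(\www)\,|\mathcall{F}_{\R^{2}}[\mathcall{W}_{\psi}f(\cdot,\theta)](\www)|^{2}\,{\rm d}\www\,{\rm d}\theta = 2\pi\,\|f\|_{\mathbb{L}_{2}(\R^{2})}^{2}<\infty$, which gives both the membership and (dividing by $2\pi$) the isometry $\|\mathcall{W}_{\psi}f\|_{M_{\psi}}=\|f\|_{\mathbb{L}_{2}(\R^{2})}$ already contained in Theorem~\ref{MPsiRecon}. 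Closedness is then automatic: $\mathbb{L}_{2}^{\varrho}(\R^{2})$ is complete and $\mathcall{W}_{\psi}$ is an isometric isomorphism of it onto $\mathbb{C}_{K}^{SE(2)}$ for $\|\cdot\|_{M_{\psi}}$, so $\mathbb{C}_{K}^{SE(2)}$ is complete, hence a closed subspace of $\mathbb{H}_{\psi}\otimes\mathbb{L}_{2}(S^{1})$.

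For the projection formula I would show that $\mathcall{W}_{\psi}^{*,ext}$ of Eq.~(\ref{Recform}) is the Hilbert--space adjoint of $\mathcall{W}_{\psi}\colon\mathbb{L}_{2}^{\varrho}(\R^{2})\to(\mathbb{H}_{\psi}\otimes\mathbb{L}_{2}(S^{1}),(\cdot,\cdot)_{M_{\psi}})$, i.e.\ $(\mathcall{W}_{\psi}f,U)_{M_{\psi}}=(f,\mathcall{W}_{\psi}^{*,ext}U)_{\mathbb{L}_{2}(\R^{2})}$ for all $f\in\mathbb{L}_{2}^{\varrho}(\R^{2})$ and $U\in\mathbb{H}_{\psi}\otimes\mathbb{L}_{2}(S^{1})$; this is a Plancherel computation in which the two factors $M_{\psi}^{-1/2}$ combine with the $M_{\psi}^{-1}$--multiplier and the convolution of Eq.~(\ref{Recform}) to produce exactly the right-hand side. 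Restricted to $\mathbb{C}_{K}^{SE(2)}$, Eq.~(\ref{Recform}) is the classical reconstruction~(\ref{OrientationScoreReconstruction}), so $\mathcall{W}_{\psi}^{*,ext}|_{\mathbb{C}_{K}^{SE(2)}}=\mathcall{W}_{\psi}^{-1}$, while $\ker\mathcall{W}_{\psi}^{*,ext}=(\mathrm{range}\,\mathcall{W}_{\psi})^{\perp}=(\mathbb{C}_{K}^{SE(2)})^{\perp}$. Therefore $\mathbb{P}_{\psi}:=\mathcall{W}_{\psi}\mathcall{W}_{\psi}^{*,ext}$ is the identity on $\mathbb{C}_{K}^{SE(2)}$ and vanishes on its orthogonal complement, i.e.\ it is the orthogonal projection onto $\mathbb{C}_{K}^{SE(2)}$. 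Finally, since $K_{(\ul{x},\theta)}\in\mathbb{C}_{K}^{SE(2)}$, $\mathbb{P}_{\psi}$ is self-adjoint and $\mathbb{P}_{\psi}K_{(\ul{x},\theta)}=K_{(\ul{x},\theta)}$, the reproducing property applied to $\mathbb{P}_{\psi}U\in\mathbb{C}_{K}^{SE(2)}$ gives $(K_{(\ul{x},\theta)},U)_{M_{\psi}}=(K_{(\ul{x},\theta)},\mathbb{P}_{\psi}U)_{M_{\psi}}=(\mathbb{P}_{\psi}U)(\ul{x},\theta)$, which is the displayed identity.

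The main obstacle is the single claim in the third paragraph: that the explicit formula~(\ref{Recform}) is well-defined on \emph{all} of $\mathbb{H}_{\psi}\otimes\mathbb{L}_{2}(S^{1})$ (and not merely on orientation scores, whose spatial Fourier support lies in $B_{\ul{0},\varrho}$), and that it genuinely dualises $\mathcall{W}_{\psi}$ rather than a rescaled or reflected variant of it. This is where the hypothesis $M_{\psi}(\www)>0$ for all $\www\in\R^{2}$ enters, together with the decay of $\mathcall{F}_{\R^{2}}\psi$ that follows from $\psi\in\mathbb{L}_{1}(\R^{2})$, and where the reflection $\theta\mapsto\theta+\pi$ and the various $2\pi$--normalisations in~(\ref{OrientationScoreConstruction}), (\ref{OrientationScoreReconstruction}) and~(\ref{Recform}) must be bookkept carefully; the remaining steps are immediate from Theorem~\ref{MPsiRecon} and Plancherel's theorem.
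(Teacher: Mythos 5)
Your argument is correct and follows exactly the route the paper intends: the corollary is meant to be read off from Theorem~\ref{MPsiRecon} by extending $\mathcall{T}_{M_{\psi}}$ to an isometry of $\mathbb{H}_{\psi}\otimes\mathbb{L}_{2}(S^{1})$, deducing closedness of $\mathbb{C}_{K}^{SE(2)}$ from completeness of $\mathbb{L}_{2}^{\varrho}(\R^{2})$ under the isometry $\mathcall{W}_{\psi}$, identifying \eqref{Recform} as the $(\cdot,\cdot)_{M_{\psi}}$-adjoint via Plancherel, and invoking the standard fact that an isometry composed with its adjoint is the orthogonal projection onto its range, together with the reproducing property. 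The one delicate point you flag (the $\theta\mapsto\theta+\pi$ reflection and the $2\pi$-normalisations in \eqref{Recform} versus \eqref{OrientationScoreConstruction} and the two variants of $M_{\psi}$) is genuinely where the paper's own conventions must be reconciled, so flagging it rather than glossing over it is appropriate.
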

	\begin{remark}
		In Theorem~\ref{MPsiRecon} we have restricted ourselves to disk-limited images. In Corollary~\ref{corr:essence} we did not apply such a restriction, as it is not needed. Indeed, if $U \in \mathbb{H}_{\psi}$ is such that $\mathcall{F}U(\cdot,\theta)$ has support outside the disk with radius $\varrho$ for all $\theta \in (-\pi,\pi]$, then it is mapped to zero in (\ref{Recform}), i.e. then $\mathcall{W}_{\psi}^{*,ext} U=0$. \\
		However, in order to ensure that the Sobolev type of space $\mathbb{H}_{\psi}$ is a true $\mathbb{L}_{2}$-space endowed with $\mathbb{L}_{2}$-norm a restriction to disk limited images $f \in \mathbb{L}_{2}^{\varrho}(\R^{2})$ is necessary, as $M_{\psi}$ is a continuous function vanishing at infinity.
		In that case (using $\mathbb{L}_{2}^{\varrho}(\R^{2})$, $0<\varrho < \infty$, as input space) we need to replace $\mathbb{H}_{\psi}$ by the space $\mathbb{H}_{\psi}^{\varrho}:=\{f \in \mathbb{L}_{2}^{\varrho}(\R^{2})\;|\; M_{\psi}^{-1} \mathcall{F}f \in \mathbb{L}_{2}(\R^{2})\}$. In case $M_{\psi}$ is uniformly bounded from below on $B_{\ul{0}}^{\varrho}$, the set $\mathbb{H}_{\psi}^{\varrho}$ coincides with the set
		$\mathbb{L}_{2}^{\varrho}(\R^{2})$, although it is equipped with a different equivalent norm.
		In case $M_{\psi}= 1_{B_{\ul{0},\varrho}}$, the norms coincide and then $\mathbb{H}_{\psi}^{\varrho} \otimes \mathbb{L}_{2}(S^{1}) =\mathbb{L}_{2}(SE(2))$, and (\ref{Recform}) reduces to
		\[
		(\mathcall{W}_{\psi}^{*,ext} U)(\ul{x})=
		\int \limits_{-\pi}^{\pi} (\psi_{\theta+\pi} * U(\cdot,\theta))(\ul{x}) {\rm d}\theta
		= \int \limits_{SE(2)} U(g)\; (\mathcall{U}_{g}\psi)(\ul{x}) \, {\rm d}\mu_{G}(g).
		\]
	\end{remark}
	
	\section{Asymptotical Behavior of the Kernels around the Origin in the Fourier Domain}\label{app:A}
	
	Asymptotical analysis is done for the contour enhancement case in \ref{AsymptoticalEnhancement}, while asymptotical analysis is done for the contour completion case in \ref{AsymptoticalCompletion}.
	
	\subsection{Contour Enhancement Asymptotic Formulas along $\omega_\xi$ and $\omega_\eta$-axis}\label{AsymptoticalEnhancement}
	By freezing \mbox{$\cos^2(\varphi - \theta)=1$} and dividing by $D_{33}$ within the generator in the Fourier domain Eq.~(\ref{ContourEnhancementMathieuOperator}). The formula are given as follows:
	\begin{equation}
	\left((\frac{D_{11}}{D_{33}}\rho^2+\frac{\alpha}{D_{33}})-\partial_\theta^2\right)\hat{R}_\alpha^{D_{11},D_{33}}(\www,\cdot)=\frac{\alpha}{2\pi D_{33}}\delta_0^\theta,
	\end{equation}
	in which $\rho=\omega_\xi=\cos\theta \omega_x+\sin\theta \omega_y$, and $\www=(\rho \cos\varphi, \rho \sin \varphi) \in \R^{2}$. This is solved by making continuous fit of solutions in null-space akin to Figure~\ref{fig:ContinuousFit}. Then we find
	\begin{equation}
	\hat{R}_\alpha^{D_{11},D_{33}}(\www,\cdot)=\frac{\alpha}{2\pi D_{33}W_\rho}\left\{
	\begin{array}{l}
	e^{\sqrt{\lambda}\theta}, \textrm{ for }\theta \leq 0, \\
	e^{-\sqrt{\lambda}\theta}, \textrm{ for }\theta \geq 0,
	\end{array}
	\right.
	\end{equation}
	where $\lambda=\frac{D_{11}}{D_{33}}\rho^2+\frac{\alpha}{D_{33}}$, and $W_\rho=2\sqrt{\lambda}$ denotes the Wronskian according to Eq.~(\ref{WronskianComputation}). Then, the approximation of the exact solution for contour enhancement is written as:
	\begin{equation}\label{A3}
	\hat{R}_\alpha^{D_{11},D_{33}}(\rho\cos\theta,\rho\sin\theta,\theta)\approx\frac{\alpha}{4\pi D_{33}}\frac{e^{-\sqrt{\rho^2\frac{D_{11}}{D_{33}}+\frac{\alpha}{D_{33}}}|\theta|}}{\sqrt{\rho^2\frac{D_{11}}{D_{33}}+\frac{\alpha}{D_{33}}}},
	\end{equation}
	in which $\frac{D_{33}}{D_{11}}$ should be small.
	Then, we can find the fundamental solution by taking $\lim_{\alpha\downarrow 0}\frac{\hat{R}_\alpha^{D_{11},D_{33}}(\pmb{\omega},\theta)}{\alpha}$, which can be represented as:
	\begin{equation}
	\hat{S}^{D_{11},D_{33}}(\rho\cos\theta,\rho\sin\theta,\theta)\approx\frac{1}{4 \pi}\frac{1}{\rho\sqrt{D_{11}D_{33}}}-\frac{|\theta|}{4 \pi D_{33}}+O(\theta^2)
	\end{equation}
	
	Similarly, we can also get the resolvent equation along $\omega_\eta-$ axis for small $\rho$. Here we cannot freeze $\cos(\varphi - \theta)=0$ because the $\rho$ dependence will be lost, and we must rely on higher order expansion producing the following asymptotic formula:
	\begin{equation}
	\hat{R}_\alpha^{D_{11},D_{33}}(\rho\cos\theta,\rho\sin\theta,\theta)\approx
	\frac{\alpha}{4 \pi} \frac{1}{\sqrt{\rho^2 D_{11}D_{33}+\alpha D_{33}}}-\frac{1}{4 \pi}\left(\frac{1-e^{-\sqrt{\frac{\alpha}{D_{33}}}|\theta|}}{\sqrt{\alpha}\sqrt{D_{33}}}\right),
	\end{equation}
	and again for $0<\rho\ll1$
	\begin{equation}\label{A6}
	\hat{S}^{D_{11},D_{33}}(\rho\cos\theta,\rho\sin\theta,\theta)\approx\frac{1}{4 \pi}\frac{1}{\rho\sqrt{D_{11}D_{33}}}-\frac{|\theta|}{4 \pi D_{33}}.
	\end{equation}
	
	\textbf{\boldmath Conclusion:} From Eq.~(\ref{A3}) and (\ref{A6}), we deduce that $\hat{R}_\alpha^{D_{11},D_{33}}(\pmb{\omega},\theta)$ does not have a pole at $\pmb{\omega}=0$ for $\alpha>0$. $\hat{S}^{D_{11},D_{33}}(\pmb{\omega},\theta)$ has a pole of order 1 at $\pmb{\omega}=0$.
	
	\subsection{Contour Completion Asymptotic Formulas along $\omega_\xi$ and $\omega_\eta$-axis}\label{AsymptoticalCompletion}
	We again freeze $\cos(\varphi - \theta)=1$ for $\varphi=\theta$, i.e. along the $\omega_\xi$-axis, where $\rho=\omega_\xi=\cos\theta \omega_x+\sin\theta \omega_y$ in the generator in the Fourier domain Eq.~(\ref{ContourEnhancementMathieuOperator}) and apply Taylor approximation. Then we have the approximation of the resolvent equation in the Fourier domain, which is given by
	\begin{equation}
	\hat{R}_\alpha^{D_{33}}(\rho\cos\theta,\rho\sin\theta,\theta)
	\approx \frac{\alpha}{4 \pi}\frac{e^{-\frac{|\theta|\sqrt{D_{33}}}{\sqrt{\alpha+i\rho}}}}{\sqrt{\alpha D_{33}+i \rho D_{33}}}\\
	\end{equation}
	Note that the fundamental solution
	\begin{equation}
	\begin{aligned}
	\hat{S}^{D_{33}}(\rho\cos\theta,\rho\sin\theta,\theta)
	&\approx \lim_{\alpha \downarrow 0}\frac{1}{\alpha}\hat{R}_\alpha^{D_{33}}(\rho\cos\theta,\rho\sin\theta,\theta)\\
	&= \frac{1}{4 \pi \sqrt{\rho}}e^{-\frac{|\theta|\sqrt{D_{33}}}{\sqrt{2}\rho}}\left(\cos\left(\frac{|\theta|\sqrt{D_{33}}}{\sqrt{2}\rho}-\frac{\pi}{4}\right)-i\sin\left(\frac{|\theta|\sqrt{D_{33}}}{\sqrt{2}\rho}-\frac{\pi}{4}\right)\right).
	\end{aligned}
	\end{equation}
	Therefore, we do not have a pole in the resolvent kernel, but in the fundamental solution we have a pole of order $\frac{1}{2}$ in the Fourier domain. The behavior at $\infty$ is given by
	\begin{equation}
	\hat{S}^{D_{33}}(\rho\cos\theta,\rho\sin\theta,\theta) \approx \frac{e^{-\frac{|\theta|\sqrt{D_{33}}}{\sqrt{2}\rho}}}{4 \pi \sqrt{\rho}}.
	\end{equation}
	Unlike the enhancement case, we cannot expect local isotropy at the origin.
	
	\section{Algorithm for Evaluation of Non-periodic Mathieu functions \label{app:B}}
	\noindent
	Consider the Mathieu equation
	\begin{equation} \label{Meq}
	y''(z)+ (a-2q \cos(2z))y(z)=0
	\end{equation}
	for $a \leq 0$ and $q\neq 0$. The Floquet theorem \cite{Schaefke} yields
	the existence of solutions
	\begin{equation} \label{MathieuSolution}
	y(z)= e^{ i \nu(a,q) z} \sum \limits_{\rho-\infty}^{\infty} e^{2 i \rho z} c_{2\rho}(a,q)
	\end{equation}
	with $\nu(a,q) \in \mathbb{C}$ the Floquet exponent (which is correctly implemented in \emph{Mathematica}) and with
	$(c_{2\rho}(a,q))_{\rho \in \mathbb{Z}} \in \ell_{2}(\mathbb{Z})$.
	Now the ODE has real-valued coefficients and for our second type of exact formulas in Theorem~\ref{th:exact} we are aiming for the two real-valued solutions
	\[
	\begin{array}{l}
	\textrm{me}_{\nu}(z) \rightarrow 0 \textrm{ if }z \rightarrow \infty, \ \
	\textrm{me}_{-\nu}(z) \rightarrow 0 \textrm{ if }z \rightarrow -\infty, 
	\end{array}
	\]
	with $\overline{\nu}=-\nu$ and where we take the convention $\textrm{Im}(\nu) \geq 0$. Substitution of (\ref{MathieuSolution}) into the Mathieu ODE directly provides the two-fold recursion $ c_{2\rho+2} +\frac{-a +(2\rho+\nu)^2}{q} c_{2\rho} +c_{2\rho-2}=0, \textrm{ for all } \rho \in \mathbb{Z},
	$
	with $c_0$ and $c_{2}$ such that
	\begin{equation} \label{cond}
	\lim \limits_{\rho \pm \infty}|c_{2\rho}|^{\frac{1}{|\rho|}}=0,
	\end{equation}
	cf.~\!\cite{Schaefke}, from which it follows that $(c_{2\rho})_{\rho \in \mathbb{Z}} \in \ell_{1}(\mathbb{Z})$
	we deduce that the series representations are uniformly converging (Weierstrass criterium) and furthermore their limits are continuously differentiable.
	Now we have $\textrm{me}_{-\nu}(z)=\textrm{me}_{\nu}(-z)$ and the solutions
	are real-valued if $c_{-2\rho}=\overline{c_{2\rho}}$.
	The two-fold recursion is of the type
	\[
	c_{2\rho+2} - D_{2\rho} c_{2\rho} + c_{2\rho-2}=0,
	\]
	with $D_{2\rho}= \frac{a-(2\rho+\nu(a,q))}{q}$ and we enter the theory
	of continued fractions via division by $c_{2\rho}$
	\[
	f_{\rho}= D_{2\rho} -\frac{1}{f_{\rho-1}}, \textrm{ with }f_{\rho}:= \frac{c_{2\rho+2}}{c_{2\rho}},
	\]
	and indeed under condition (\ref{cond})\cite[Eq.(3), p.106]{Schaefke} we obtain converging solutions of type II. For definitions see \cite[Section 2.22, p.107]{Schaefke}.\\
	
	\textbf{Algorithm} \\
	input: $a\leq 0$, $\R \ni q\neq 0$, $L \in \mathbb{N}$ recursion-depth at last coefficient, $2N+1 \in \mathbb{N}$ number of coefficients. \\[7pt]
	initialization: $c_{0}=1$, $f_{0}=1$, $f_{N+L}=\frac{1}{D_{2(N+L)}}$. \\[7pt]
	For $k=1, \ldots,N+L-2$ do
	$
	f_{N+L-k}:= \frac{1}{D_{2(N+L-k+1)}-f_{N+L-k+1}}.
	$ \\
	Then build $(c_{2},\ldots,c_{2N})$ by
	$
	c_{2l}= f_{l-1}c_{2l-2}$, for $l=1,\ldots,N$. \\[7pt]
	Then build $(\overline{c}_{2N},\ldots, \overline{c}_{2},1,c_{2},\ldots, c_{2N})$. \\[7pt]
	Then compute by means of $\textbf{DFT}$, $\textrm{me}_{\nu}(z)$ and $\textrm{me}_{-\nu}(z)$ from their coefficients via Eq.~\!(\ref{MathieuSolution}).\\

	\begin{figure}[!htbp]
		\centering{
			\includegraphics[scale=0.5]{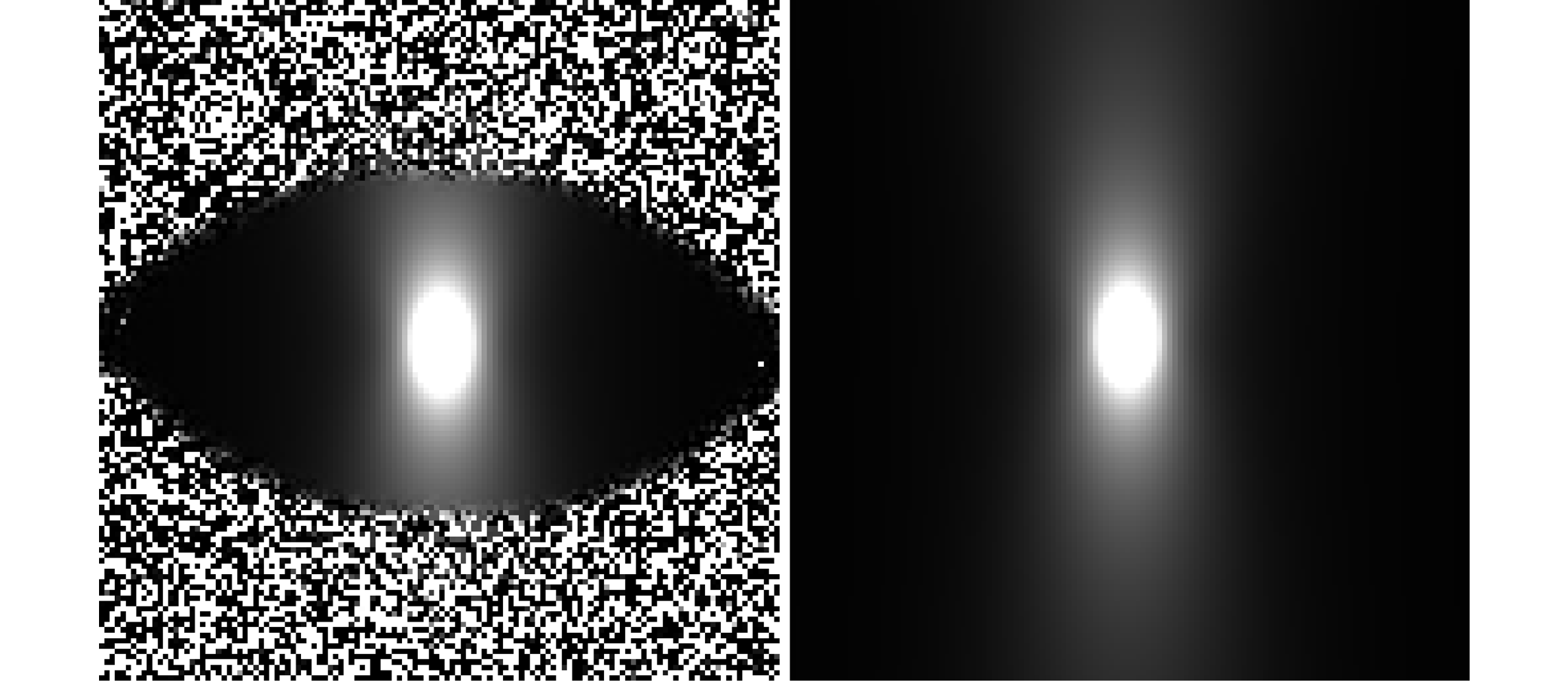}
		}
		\caption{From left to right: the final contour enhancement kernel based on the $\textit{Mathematica}$ Mathieu functions, the final contour enhancement kernel based on our own implementation of Mathieu functions. Both kernels use the same plot range and parameter settings: $\ul{D}=\{1,0,0.03\}$, $\alpha=0.025$, with sampling size $N_o = 48$ and $N_s = 128$.}
		\label{fig:MathieuImplementationComparison}
	\end{figure}
	
	Compared to the implementation of the contour enhancement kernel based on the $\textit{Mathematica}$ Mathieu functions, our own implementation of Mathieu functions is more robust and does not suffer from the numerical problems. They are much faster, see Table~\ref{tab:PPer}. Figure~\ref{fig:MathieuImplementationComparison} shows us the final kernels obtained by using the Mathieu functions of $\textit{Mathematica}$ (left) and our own implementation (right). The $\textit{Mathematica}$ Mathieu functions are shown to break down when the sampling enters into certain regions, especially with small angular diffusion. Another big advantage of our implementation is that the speed of sampling a kernel is almost 30 times faster than the $\textit{Mathematica}$ implementation. Table \ref{tab:PPer} shows us the  time requirements of the two routines for different parameter settings. We can see that our own Mathieu based implementation (OMI) is even 30 times faster than the $\textit{Mathematica}$ Mathieu based implementation (MMI).
	\begin{center}
		\begin{table}[h]
			\caption{Speed of two implementations $(\text{kernel size:}\, 48 \times 128 \times 128)$} 
			\centering 
			\begin{tabular}{l c c} 
				\hline
				\textit{\ul{Parameters}} &\textit{\ul{MMI time (s)}} & \textit{\ul{OMI time (s)}}
				\\ [0.5ex]
				\hline 
				
				$\ul{D}=\{1,0,0.03\},\;\alpha=0.025$
				& 4037 & 139\\
				
				$\ul{D}=\{1,0,0.12\},\;\alpha=0.025$
				& 3272 & 137\\
				
				$\ul{D}=\{1,0,0.03\},\;\alpha=0.05$
				& 3220 & 137\\
				
				\hline 
			\end{tabular}
			\caption*{
				\textbf{Measurement method abbreviations:} (\textit{OMI}) - Own Mathieu based implementation, (\textit{MMI}) - $\textit{Mathematica}$ Mathieu based implementation.}
			\label{tab:PPer}
		\end{table}
	\end{center}
	
	
	

\end{document}